\documentclass[reqno,15pt]{amsart}
\usepackage{amsmath}
\usepackage{amssymb}
\usepackage{amsfonts}
\usepackage{amscd}
\usepackage{mathrsfs}
\usepackage{graphics}
\usepackage{latexsym}
\usepackage{xcolor}

\usepackage[hyperfootnotes=false]{hyperref}
\usepackage{quiver}
\usepackage{ytableau}
\usepackage{float}

\usepackage{comment}

\theoremstyle{plain}
\newtheorem{theorem}{Theorem}[section]

\newtheorem{cor}[theorem]{Corollary}
\newtheorem{lem}[theorem]{Lemma}
\newtheorem{prop}[theorem]{Proposition}

\newcounter{kludge}
\newcounter{kludgeb}
\theoremstyle{definition}
\newtheorem{defn}[theorem]{Definition}

\newtheorem{rmk}[theorem]{Remark}

\theoremstyle{remark}

\input xy
\xyoption{all}

\newcommand{\marpar}[1]{}

\newcommand{\mbb}{\mathbb}
\newcommand{\QQ}{\mbb{Q}}

\newcommand{\LL}{\mbb{L}}
\newcommand{\ZZ}{\mbb{Z}}
\newcommand{\CC}{\mbb{C}}

\newcommand{\AAA}{\mbb{A}}

\newcommand{\mc}{\mathcal}

\newcommand{\mcE}{\mc{E}}

\newcommand{\mcI}{\mc{I}}
\newcommand{\mcJ}{\mc{J}}

\newcommand{\mcL}{\mc{L}}

\newcommand{\mcO}{\mc{O}}

\newcommand{\mcQ}{\mc{Q}}

\newcommand{\mcU}{\mc{U}}

\newcommand{\mcX}{\mc{X}}

\newcommand{\bfv}{\mathbf{v}}
\newcommand{\bfw}{\mathbf{w}}

\newcommand{\Ext}{\mathrm{Ext}}

\newcommand{\Hilb}{\text{Hilb}}
\newcommand{\Quot}{\text{Quot}}

\newcommand{\Hom}{\mathrm{Hom}}
\newcommand{\rk}{\mathrm{rank}}
\newcommand{\Sym}{\mathrm{Sym}}

\newsavebox{\sembox}
\newlength{\semwidth}
\newlength{\boxwidth}

\newsavebox{\semrbox}
\newlength{\semrwidth}
\newlength{\boxrwidth}

\newcommand{\ra}{\rightarrow}

\title{Total Cohomology of Hilbert schemes of Orbisurfaces I: Reduction to Stacky Loci}
\author{Shuo Gao}
\date{8 October 2026}
\address{Department of Mathematics, Stony Brook University, Stony Brook, NY}
\email{shuo.gao@stonybrook.edu}

\providecommand{\Lef}{\mathbb L}

\newtheorem{corollary}[theorem]{Corollary}

\theoremstyle{definition}

\newtheorem{remark}[theorem]{Remark}
\newtheorem{example}[theorem]{Example}
\allowdisplaybreaks
\begin{document}
\begin{abstract}
We generalize Nakajima's theory of Hilbert schemes of points on surfaces to Hilbert schemes of points on \textit{orbisurfaces},
and obtain initial factorization based on stratification by stabilizers.
While the usual formula is retained for the trivial stabilizer stratum and for those strata of codimension $1$,
genuine difficulties arise with stacky points strata.
In the local picture of an isolated stacky point, the theory reduce to the study of $\Quot^\bfv[\CC^2/G]$ for a \textit{small} finite subgroup of $GL_2(\CC)$.
The generating series for their total cohomology is a geometric correction of the generating series for total cohomology of ordinary smooth surface schemes.
In addition, we obtain an action of the Borel-Moore homology of the punctual fibre stacks on the total Hilbert cohomology by virtual correspondences,
in analogy with the Fock space description in Nakajima's theory.
The punctual fibre stacks are genuinely different from the symmetric stack, and has natural realizations via motivic integration.
\end{abstract}
\maketitle

\section{Introduction}
\label{sec:intro}
Mathematicians has long realized the advantage of taking account of all moduli of a unifying theme, and for Hilbert schemes of point this work was pioneered by Nakajima in \cite{Nakajima_1999},
yielding the clean and rich Fock space description of the total cohomology $\bigoplus_n H^*(\Hilb^n \CC^2)$ of $\Hilb^n\CC^2$; the Fock space description explains the G\"ottsche type formula
    \[\sum_{n,d\geq 0} \dim H^d(\Hilb^n\CC^2,\QQ)q^dt^n=\prod_{m\geq1}\frac{1}{1-q^{2m-2}t^m}=:F(q,t).\]
    As Nakajima's theory has nice direct applications to all smooth surfaces, different generalizations or enlargement of the notion of smooth surfaces were considered,
with the best and most natural enlargement to smooth 2-dimensional orbifolds, or smooth \textbf{orbisurfaces} --- which are precisely connected,
smooth, separated Deligne-Mumford effective stacks of finite type and of pure dimension 2 over $\CC$;
    not only was it possible to define the moduli space of substacks of orbisurfaces with specified types (replacing Hilbert polynomials),
but also the said moduli space is indeed a scheme: for each type $\alpha$ given by a integer length and correctional representation type at each stacky locus as introduced in section  \ref{subsec:alpha-type}.
    \begin{theorem}[\cite{olsson2003quot}, restated as theorem \ref{thm:global-hilbert-smooth}]
Let $\mcX$ be an orbisurface over $\CC$ and fix its  type $\alpha$.
        Then $\Quot^\alpha\mcX$ is a smooth separated quasi-projective scheme locally of finite type.
        If in addition the coarse space is projective, these Quot schemes of relative dimension 0 are also projective.
    \end{theorem}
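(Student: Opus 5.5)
The plan is to split the statement into two parts: existence with quasi-projectivity and separatedness, which comes from Olsson--Starr; and smoothness, which comes from a deformation-theoretic computation on the orbisurface.

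\emph{Representability.} I would work with Olsson--Starr's construction of Quot functors on a tame Deligne--Mumford stack $\mcX$ with quasi-projective coarse space $\pi:\mcX\to X$. First choose a generating sheaf $\mcE$; for instance, for a quotient presentation $\mcX=[U/GL_N]$, take a sum of twists of the standard representation. Then the functor $\mcF\mapsto \pi_*\mathcal{H}om(\mcE,\mcF)$ embeds $\Quot(\mcO_\mcX)$ into $\Quot_X(\pi_*\mathcal{H}om(\mcE,\mcO_\mcX))$ as a locally closed subscheme. The latter is a Grothendieck Quot scheme, so it is quasi-projective and separated over $\CC$; it is projective when $X$ is projective.

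Fixing the type $\alpha$ does two things. It fixes the length, i.e.\ the Hilbert polynomial of $\pi_*\mathcal{H}om(\mcE,\mcF)$, which is constant because $\mcF$ is zero-dimensional. It also fixes, at each stacky point, the representation type of the stalk of the quotient. Both are locally constant in flat families, since pushforward along $\pi$ is exact for tame stacks. So $\Quot^\alpha\mcX$ is a union of connected components of the Olsson--Starr scheme, which gives quasi-projectivity and separatedness.

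\emph{Properness for projective $X$.} Zero-dimensional quotients satisfy the valuative criterion. Over a DVR, a family of subsheaves $\mcI\subset\mcO_\mcX$ on the generic fibre extends uniquely by taking the saturation inside $\mcO_{\mcX_R}$. The resulting quotient is flat because it is torsion-free over $R$. It also has the same type, because the type is locally constant. Hence $\Quot^\alpha\mcX$ is proper, and since it is quasi-projective, it is projective.

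\emph{Smoothness.} At a point $[\mcO_\mcX\to\mcO_Z]$ with ideal $\mcI$, the tangent space is $\Hom_\mcX(\mcI,\mcO_Z)$ and the obstructions lie in $\Ext^1_\mcX(\mcI,\mcO_Z)$. I would show this obstruction space vanishes, and the argument reduces to local computations.

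1. \emph{Reduce to a quotient chart.} Étale locally, $\mcX\cong[\CC^2/G]$ with $G\subset GL_2(\CC)$ finite. Everything then becomes $G$-equivariant algebra over $\CC[x,y]$, and equivariant $\Ext$ equals the $G$-invariants of ordinary $\Ext$, since $|G|$ is invertible over $\CC$.

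2. \emph{Use the short exact sequence.} Apply $\Hom(-,\mcO_Z)$ to $0\to\mcI\to\mcO\to\mcO_Z\to0$. This gives
\[\Ext^1(\mcI,\mcO_Z)\cong\Ext^2(\mcO_Z,\mcO_Z),\]
because $\Ext^{i}(\mcO,\mcO_Z)=0$ for $i>0$ on the affine chart.

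3. \emph{Apply Serre duality on the smooth two-dimensional stack.} This identifies $\Ext^2(\mcO_Z,\mcO_Z)$ with $\Hom(\mcO_Z,\mcO_Z\otimes\omega_\mcX)^\vee$. In the equivariant setting, $\omega$ is the determinant character $\det^{-1}$ of $G$.

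I expect this duality step to be the main obstacle. On $\CC^2$ the space $\Hom(\mcO_Z,\mcO_Z\otimes\det^{-1})^G$ need not vanish, so $\Ext^2$ need not be zero. I would therefore switch to the standard trick used for $\Hilb(\CC^2)$: show that the obstruction map lands in the traceless part. That requires the trace map $\Ext^2(\mcO_Z,\mcO_Z)\to H^2(\mcO_\mcX)$, and on a chart $H^2(\mcO_\mcX)=0$. So the real claim is that the obstruction class to extending a quotient is the image of an obstruction that vanishes, equivalently that the deformation problem is unobstructed.

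The fallback, if the trace argument is awkward, is a direct description of $\Quot^\bfv[\CC^2/G]$. It is the $G$-fixed locus $(\Hilb^n\CC^2)^G$ of the smooth scheme $\Hilb^n\CC^2$, where $n$ is the total dimension of $\bfv$. Fixed loci of finite group actions on smooth schemes in characteristic $0$ are smooth, by Luna/Cartan linearization, so this gives smoothness at once. Globally, the étale-local charts glue, since the formation of $\Quot$ commutes with étale base change on the coarse space.
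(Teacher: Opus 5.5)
Your primary smoothness argument does not work. On a chart, the obstruction space $\Ext^1(\mcI,\mcO_Z)\cong\Ext^2(\mcO_Z,\mcO_Z)$ is genuinely nonzero. Already for trivial $G$ it is dual to $\Hom(\mcO_Z,\mcO_Z)$, of dimension the length, even though $\Hilb^N\CC^2$ is smooth. Since $H^2(\mcO)=0$ on the affine chart, the trace map is identically zero, so passing to the ``traceless part'' removes nothing. The trace argument that does work concerns $\Ext^2(\mcI,\mcI)_0$ on a \emph{proper} surface and rests on Serre duality for $\mcI$, which is unavailable on an affine chart. As written, you assert unobstructedness rather than prove it.

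Your fallback, however, is correct and is exactly the paper's proof. Equivariant quotients form an open-and-closed part of $(\Hilb^N\CC^2)^G$, which is smooth as the fixed locus of a finite group acting on Fogarty's smooth Hilbert scheme; the paper phrases this on formal germs, using equivariant parameters. What the fallback still needs is the globalization. A point of $\Quot^\alpha\mcX$ may be supported at several points, stacky and non-stacky, so no single chart contains it. Moreover, $\Quot$ does not literally commute with \'etale base change on the coarse space, so ``the charts glue'' is not an argument. The paper supplies the missing step: the idempotents separating the distinct supports lift uniquely over Artinian bases. Hence the deformation functor is the product of the equivariant deformation functors of the completed linearized charts $[\operatorname{Spec}\CC[[x,y]]/G_p]$, each formally smooth by the fixed-locus argument.

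Your representability and projectivity steps agree with the paper's reliance on Olsson--Starr. Proving properness by the valuative criterion, by saturating the generic ideal, is a fine self-contained substitute for citing their projectivity theorem. You should, however, justify the global quotient presentation you use to build a generating sheaf. The paper does this via the Edidin--Hassett--Kresch--Vistoli criterion; the frame bundle of the effective stack would also work.

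Also correct the open-and-closed claim. What is locally constant in flat families is the numerical type $[\mcE]\mapsto\chi(\mcX,\mcQ\otimes\mcE)$, which is what your exactness-of-$\pi_*$ argument actually proves. It is not the stalk representation at a fixed stacky point. A free orbit moving into the origin of $[\CC^2/G]$ changes the stalk there from $0$ to $\CC[G]$, and a defect can slide along a stacky curve. So fixing stalk representations pointwise would not cut out an open locus.
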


    This article attempts to provide a unifying theme to consider $\Quot^\alpha\mcX$ for all orbisurfaces $\mcX$,
with an emphasis on the \textbf{balanced} type $(n,\vec{0})$ with all vanishing correction terms;
    we slightly abuse our notation by denoting such type by the integer $n$ as well, and define the Hilbert scheme to be $\Hilb^n\mcX:=\Quot^{(m,\vec{0})}\mcX$.
    Proposition \ref{prop:finite-stacky-locus} gives a initial topological partition $\mcX = \Delta_0\coprod\Delta_1\coprod\mcU$ be a topological partition of $\mcX$ into stacky loci of dimension $0$,
    stacky loci\footnote{possibly with a finite set of points removed to be disjoint from $\Delta_0$.}
of dimension $1$ and the open locus with trivial stabilizer, all of finite connected components.
    
    Topologically, we can define
    \[B_\mcX(q,t) = \sum_{n,d\geq 0} \dim H^d(\Hilb^n\mcX,\QQ) q^dt^n,\]
    and similarly with support specified, we can define the punctual Hilbert scheme $\Hilb^n_{Z}\mcX \subset \Hilb^n\mcX$ to be the locus of balanced quotients supported in $Z$ for locally closed substack $Z\subset\mcX$ with punctual series
    \[B_{\mcX,Z}(t)=\sum_{n\geq0}[\Hilb^n_{Z}\mcX]t^n.\]
    The support-decomposition argument of Gusein-Zade-Luengo-Melle-Hern\'andez,
    \[B_{\mcX} = B_{\mcX,\Delta_0} \cdot B_{\mcX,\Delta_1}\cdot  B_{\mcX,\mcU}=B_{\mcX,\Delta_0} \cdot B_{\mcX,\Delta_1}\cdot F_\mcU \]
    along $\mcX=\Delta_0\coprod\Delta_1\coprod\mcU$, by \cite[Theorem 1]{GuseinZadeLuengoMelle2010};
    note $F_\mcU$ is the classical generating series restated in section \ref{subsec:surface_series}.
    In a local chart with stabilizer group $H$, the balanced type $n$ means the quotient has character $n\CC[H]$.
    This is the local balancing condition of type (3) equivariant Hilbert scheme in \textit{loc.cit.};
    in the surface case it is also identified with type (2),
    the principal component containing configurations of distinct free orbits.
    Thus, for a small $G\subset GL_2(\CC)$, our $\Hilb^n[\CC^2/G]$ is their ${}^{(2)}\Hilb_{\CC^2}^{G,n|G|}$.
    On a general stack, balancing condition is imposed at each support point.
    The motivic factorization reduces the additive series to punctual factors on the stabilizer strata.
    The ordinary Betti factorization requires the support decomposition of Section \ref{sec:support},
    since ordinary Betti polynomials are not additive for this closed-open decomposition.

    The above motivic factorization reduces our study to local geometry supported on each stacky locus.
    Locally, an orbisurface $\mcX$ whose only stacky locus is a smooth divisor $D$ assumes the form $\mcX = \sqrt[r]{(S,D)}$.
    Contrary to the author's initial belief, the stacky curve components cause few difficulties:
    \begin{theorem}[proposition \ref{prop:curve-punctual-series}]
For $\mcX = \sqrt[r]{(S,D)}$, a point $I\in \Quot^\alpha\mcX$ is equivalent to a flag, whose members uniquely decompose $I$:
    \[I=\bigoplus_{i=0}^{r-1}z^iJ_i,\quad J_{r-1}(-D)\subset J_0\subset\cdots\subset J_{r-1}.\]
    Put $k=\ell(\mcO/J_{r-1})$ and $\eta_j=\ell(J_j/J_{j-1})$.
    The punctual series is
    \[ \sum_{k,\boldsymbol\eta}P(P^{(r)}_{k,\boldsymbol\eta};q) t^k\mathbf z^{\boldsymbol\eta} =F(q,t)\prod_{j=1}^{r-1}\prod_{m\ge0}\frac{1}{1-q^{2m}t^mz_j},\]
    where the $P^{(r)}_{k,\boldsymbol\eta}$ is the scheme parameterizing all such flags.
    \end{theorem}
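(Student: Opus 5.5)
Work étale-locally on $S$ near $D$: choose coordinates with $D=\{x=0\}$. Then $\mcX$ is locally $[\mathrm{Spec}\,\mcO_S[z]/(z^r-x)\,/\,\mu_r]$, where $\mu_r$ acts on $z$ by the standard character. A $\mu_r$-equivariant ideal $I\subset\mcO_S[z]/(z^r-x)$ splits into isotypic pieces. Since $\mcO_S[z]/(z^r-x)=\bigoplus_{i=0}^{r-1}z^i\mcO_S$ as $\mcO_S$-modules, the weight-$i$ piece of $I$ has the form $z^iJ_i$ for an ideal $J_i\subset\mcO_S$. Being an ideal means closure under multiplication by $z$. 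This gives $z\cdot z^iJ_i\subset z^{i+1}J_{i+1}$, i.e. $J_i\subset J_{i+1}$ for $i<r-1$, and $z\cdot z^{r-1}J_{r-1}=xJ_{r-1}\subset J_0$, i.e. $J_{r-1}(-D)\subset J_0$. Conversely every such flag defines an equivariant ideal. The construction is functorial in families and glues along $D$, so $\Quot^\alpha\mcX$ (with $\alpha$ recording $k$ and the $\eta_j$) is identified with the flag scheme $P^{(r)}_{k,\boldsymbol\eta}$. This proves the first claim. The quotient length in weight $i$ is $\ell(\mcO/J_i)=k+\sum_{j>i}\eta_j$, which identifies the type with $(k,\boldsymbol\eta)$.

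For the series, stratify $P^{(r)}_{k,\boldsymbol\eta}$ by forgetting to $J_{r-1}\in\Hilb^k S$. The fibre over $J:=J_{r-1}$ is the space of chains $J(-D)\subset J_0\subset\cdots\subset J_{r-2}\subset J$ with prescribed successive colengths $\eta_j$. These are subspaces of the module $M=J/J(-D)=J\otimes\mcO_D$, which is a coherent sheaf on $D$. Its generic rank is $1$, and its torsion depends on $J$. The lengths $\eta_j$ sum to $\ell(J/J_0)$, which is at most $\ell(J/J(-D))$. Since $J/J(-D)$ has infinite length, the condition $\eta_j$ finite is automatic only after cutting by $J_0\supset J(-D)$. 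So each $J_j/J(-D)$ is a cofinite-length submodule of $M$, and the $\mcO_D$-submodule structure is what matters.

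The expected product formula suggests the following. For a rank-one sheaf on the curve $D$, the quotients $M\to Q_1\to\cdots$ of successive lengths $\eta_j$ form iterated Quot schemes of points on a curve. The factor $\prod_m (1-q^{2m}t^mz_j)^{-1}$ should arise from a further decomposition. It comes from splitting $J$ near $D$ into a part away from $D$ together with an $x$-adic filtration. Concretely, I would use a Białynicki-Birula decomposition for a torus acting on local $\CC^2$ with $D$ the $y$-axis, so that the Betti series is computed from fixed points, namely monomial ideals. The total cohomology is even and pure, so $P(\cdot;q)$ is a sum of $q^{2\dim}$ of the attracting cells. The global statement follows by the support decomposition of Section \ref{sec:support} combined with purity, via $F(q,t)$ for the $S$ part.

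The combinatorial step is the one to carry out. The fixed points are monomial $J_{r-1}$, i.e. partitions $\lambda$, together with monomial intermediate chains between $x\lambda$ and $\lambda$. Such chains amount to choosing, column by column, which boxes of the strip $\lambda\setminus x\lambda$ are removed at each step. I expect these chains to biject with tuples of partitions or multisets of column indices $m\ge0$, one per $j$. One then computes the tangent weights at each fixed point to get the cell dimension $2m$ for a box in column $m$. I expect this tangent-space/cell-dimension count to be the main obstacle. The bijection must transform $\lambda$ together with the chain into a pair consisting of an unconstrained partition, giving $F(q,t)$, and independent multisets, giving the $z_j$ factors. I would first try a ``sliding'' bijection absorbing removed boxes into $\lambda$ (explaining the $t^m$), and then verify the dimension by computing $\Hom$ between consecutive flag members via the Ellingsrud–Strømme-style count.
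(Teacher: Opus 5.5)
Your derivation of the flag $I=\bigoplus_i z^iJ_i$, $xJ_{r-1}\subset J_0\subset\cdots\subset J_{r-1}$, is the paper's argument. Note only that the space it identifies with $\Quot^\alpha\mcX$ is the global flag space $H^{(r)}_{k,\boldsymbol\eta}(S,D)$; $P^{(r)}_{k,\boldsymbol\eta}$ is its fibre over a point of $D$.

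For the series, however, there is a genuine gap. The paper's proof imports Mochizuki's parabolic punctual affine paving. Its cells are ordinary modes of charge $(m,\mathbf 0)$ and dimension $m-1$ ($m\ge1$), and defect modes of charge $(m,\mathbf e_j)$ and dimension $m$ ($m\ge0$). The formula is the motivic product of these cells under $\LL\mapsto q^2$. Projectivity plus the closed filtration by cells gives one class in degree $2a$ per cell of dimension $a$. You set out to build such a paving yourself, but the proposal stops exactly where the content lies. No cell dimension is computed, and the bijection with (partition, $r-1$ multisets) is only ``expected''. You rightly abandon the stratification by $J_{r-1}$: the fibre is an iterated punctual Quot scheme of $J/xJ\cong\mcO_{D,p}\oplus T_J$ and jumps with the torsion $T_J$. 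Nothing replaces it, so the product formula is not proved. Also, $F(q,t)$ here is the Brian\c{c}on punctual factor produced by the ordinary modes. It is not an ``$S$ part'' coming from the global support decomposition, which plays no role in this local statement.

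To make your route a proof you need three things.

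\emph{The smooth ambient.} Run Bia\l ynicki-Birula on the smooth space $H^{(r)}_{k,\boldsymbol\eta}(\CC^2,\{x=0\})$ of Proposition \ref{curve:prop:parabolic}. Let the torus act on $(z,y)$ and take a generic one-parameter subgroup under which nonzero points of the plane escape. Properness of the support map then makes $P^{(r)}_{k,\boldsymbol\eta}$ exactly the union of the cells of the monomial flags. Applied directly to the singular, reducible punctual fibre, the decomposition need not consist of affine spaces.

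\emph{The tangent weights.} At a monomial flag, i.e.\ a $\mu_r$-invariant monomial ideal $I\subset\CC[[z,y]]$, the cell dimension is the number of weights of one sign among the $\mu_r$-invariant terms of the hook character of $\Hom(I,\CC[[z,y]]/I)$. This must be computed.

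\emph{The matching.} The fixed-point count is the easy part and suggests the bijection. The set $J\setminus xJ$ consists of exactly one monomial $x^{\lambda^{(b)}}y^b$ for each $b\ge0$, where $\lambda^{(b)}$ is the length of row $b$; it is infinite, not a strip inside $\lambda$. Label row $b$ by the $j\ge1$ with $x^{\lambda^{(b)}}y^b\in J_j\setminus J_{j-1}$, or by $0$ if it lies in $J_0$. The labels are weakly monotone along each maximal block of equal rows. A block of $c$ rows of length $\ell$ thus becomes ordinary parts $\ell$ (label $0$) plus defect modes $t^\ell z_j$ (label $j$). The infinite column supplies the modes with $m=0$. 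Summing gives the formula at $q=1$.

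What is missing, and is the actual proof, is that the tangent-weight count refines this. Under this bijection the cell through a flag must have dimension $k-\ell(\mu)$, where $\mu$ is the partition of rows labelled $0$; otherwise you need another bijection compatible with the cell dimensions. A defect mode gives a cell of complex dimension $m$, i.e.\ degree $2m$, not dimension $2m$. Alternatively, cite the parabolic paving as the paper does.
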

    The global Hilbert-Chow morphism $\Quot^\alpha \sqrt[r]{(S,D)} \ra \Sym^aS$ is semismall with appropriate fibres isomorphic to $P^{(r)}_{k,\boldsymbol{\eta}}$.
    This recovers the ordinary Hilbert scheme of the coarse surface $S$.

    The remaining two possible stacky loci are either an isolated stacky point, or an intersection of stacky curves.
    With complicated interplays between all level of stacky locus, we postpone studies of orbisurfaces with intersections of stacky curves to sequel work.
    For isolated stacky point, it locally looks like $\Quot^{\bfv}[\CC^2/G]$, with a dimension vector $\bfv$ specifying the representation type of the quotient.
    The type of the Quot scheme is simplified to a finite representation of $G$ or equivalently its dimension vector $\bfv\in K_0(\CC[G])$.
    For balanced type $\bfv=n\delta$, we simplify our notation by setting $B_G(q,t):=B_{[\CC^2/G]}(q,t)$.
    A particularly notable progress made by Craw-Gammelgaard-Gyenge-Szendroi in \cite{CrawGammelgaardGyengeSzendroi2021b} is about the total cohomology of the Hilbert scheme global quotient stack $[\CC^2/G_0]$ by finite subgroup $G_0\subset SL_2(\CC)$,
which happens to agree with the crepant resolution $Y_{G_0}$ of the coarse space $\CC^2/G_0$ of our stack.
    Unfortunately, this in general is not true for finite subgroup of $G < GL_2(\CC)$, and a theory similar to Nakajima's was sought by Gusein-Zade in \cite{GuseinZadeLuengoMelle2010} but never found,
with the Betti numbers of each $\Hilb^n[\CC^2/G]$ for \textit{small} finite subgroup of $GL_2(\CC)$ computed by Bejleri-Zaimi in \cite{Bejleri_Zaimi_2023} via Bia\l{l}ynicki-Birula decompositions (later as \textbf{BB decompositions}); the word 'small' here means the only fixed point of $\CC^2$ under $G$-action is the origin.
    
    When the small group $G\not< SL_2(\CC)$, this series is different from the generating series for the minimal resolution $Y=Y_G$,
as opposed to the crepant case in \cite{CrawGammelgaardGyengeSzendroi2021b} surveyed in section \ref{sec:preliminaries}.
    Previous motivic factorization shows this is naturally a computation of the punctual fibre of the Hilbert-Chow morphisms:
    \begin{theorem}[Theorem \ref{thm:support}]
\label{intro-isolated-supp}
For every small finite subgroup $G\subset GL_2(\CC)$,
        \[B_G(q,t)=F(q,t)\cdot K_G(q,t),\]
        where $K_G(q,t)=\sum_{n,d}\dim I_n^d(G)q^dt^n$ and
        \[I_n^d(G)=\operatorname{im}\bigl(H_c^d(\Hilb^n[\CC^2/G],\QQ) \longrightarrow H^d(\Hilb^n[\CC^2/G],\QQ)\bigr).\]
        These graded dimensions are the multiplicities in the decomposition theorem for the Hilbert-Chow morphisms.
    \end{theorem}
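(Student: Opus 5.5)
We aim to prove that $B_G=F\cdot K_G$ by applying the decomposition theorem to the Hilbert--Chow morphism $\pi_n:\Hilb^n[\CC^2/G]\to \Sym^n(\CC^2/G)$ and sorting the summands by support. We use two facts. First, by Theorem \ref{thm:global-hilbert-smooth}, $\Hilb^n[\CC^2/G]$ is smooth. Second, $\pi_n$ is proper. It is also semismall: away from the origin the stack is a free quotient, so locally $\pi_n$ is an ordinary Hilbert--Chow morphism of a smooth surface, and at the origin the fibre dimension should be bounded by the punctual computation.

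By the decomposition theorem, $R\pi_{n*}\QQ[\dim]$ is a direct sum of IC complexes of local systems on the strata of $\Sym^n(\CC^2/G)$. A stratum is indexed by a partition $\nu$ of the free part together with the part $m$ concentrated at the origin, where $n=m+|\nu|$. We then use the product structure of a neighbourhood of a point in the stratum indexed by $(m,\nu)$. There $\Hilb^n$ is locally $\Hilb^m_0[\CC^2/G]$ times a product of ordinary punctual Hilbert schemes of $\CC^2$; this is the support decomposition of Section \ref{sec:support}. It follows that every relevant summand is an external product of two pieces:
\begin{itemize}
\item a summand supported at the origin for $\Hilb^m[\CC^2/G]$;
\item a Göttsche-type summand for the free part.
\end{itemize}
The free part is the ordinary Hilbert scheme of the smooth surface $(\CC^2\setminus 0)/G$, which has the same Betti series as $\CC^2$. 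Summing its contributions over $\nu$ reproduces $F(q,t)$ exactly as in the classical proof (Section \ref{subsec:surface_series}). Consequently
\[B_G(q,t)=F(q,t)\cdot\sum_m (\text{graded dimension of origin-supported summands for } m)\,t^m.\]

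The remaining step is to show that the summands of $R\pi_{m*}\QQ$ supported on the point stratum $\{m\cdot 0\}$ have total graded dimension $\dim I_m^d(G)$. Each such summand is a skyscraper $V_m[\,\cdot\,]$. Its hypercohomology contributes equally to $H^*$ and to $H^*_c$, because $\Sym^m(\CC^2/G)$ is contracted to the cone point $m\cdot 0$ by the $\CC^*$-scaling action. The image of $H_c\to H$ is therefore at least $V_m$. For the reverse inequality we use that every other summand is an IC sheaf on a positive-dimensional conical stratum closure. For such a summand the natural map $H^*_c\to H^*$ vanishes: this is the standard statement that for a $\CC^*$-contracting cone, $H_c^*(\mathrm{IC})$ sits in degrees above the middle and $H^*(\mathrm{IC})$ below the middle. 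We would verify the degree bounds via the attractive $\CC^*$-action, or alternatively via purity and weights. Hence $\operatorname{im}(H_c\to H)$ equals the part coming from the point-supported summands, so the origin factor is exactly $K_G$.

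We expect the main obstacle to be this vanishing of $H_c\to H$ on non-point-supported summands. The cone-degree argument requires the local systems on the strata to be $\CC^*$-equivariant, and requires the degree inequality to be strict. The strict inequality fails for constant sheaves on a point, which is precisely why those summands survive. Checking semismallness at the origin for non-$SL_2$ groups is a secondary concern. If semismallness fails, we would instead use the general decomposition theorem with shifts, and the same image argument still isolates the point-supported summands.
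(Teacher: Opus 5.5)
Your architecture matches the paper's proof of Theorem \ref{thm:support}. You apply the decomposition theorem to $\pi_n$, use the local product structure over a point of $S_{m,\lambda}$ (Lemma \ref{lem:local-multiplicities}), and observe that $H_c^*\to H^*$ is the identity on skyscrapers at $n[0]$ and kills every positive-dimensional summand.

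Your vanishing argument is a valid variant of the paper's. $R\pi_{n*}\QQ$ is equivariant for scalar dilation, so its simple summands are $\CC^*$-equivariant, because the group is connected. The contraction lemma then identifies $H^*$ and $H^*_c$ of $\operatorname{IC}_{\overline S}(\mathcal L)$ with its stalk and costalk at the vertex. When $\dim\overline S>0$, these lie in degrees $\le -1$ and $\ge 1$ respectively. The paper instead computes $H^*(Z_\lambda)$ and $H^*_c(Z_\lambda)$ on the normalization (Lemma \ref{lem:normalization}). Your route does not need to know the local systems, but the paper needs the normalization anyway for the next step.

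The genuine gap is the step producing $F(q,t)$. Your claim that $(\CC^2\setminus 0)/G$ has the Betti series of $\CC^2$ is false. Smallness makes $G$ act freely on $S^3$, so $(\CC^2\setminus0)/G\simeq S^3/G$ is a rational homology $3$-sphere with $b_3=1$. Göttsche's formula for that surface therefore carries an extra numerator $\prod_m(1+q^{2m+1}t^m)$.

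More fundamentally, the cohomology of the open free part never enters. A summand with exact support $\overline S_{m,\lambda}$ contributes its intersection cohomology on a \emph{closed} subvariety of $\Sym^nX$, where the free points may run into the origin. Ordinary Betti numbers are not additive over the open--closed split; the paper stresses that the moving factor involves $X$, not the open part.

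What you actually need is two facts:
\begin{enumerate}
\item The coefficient local systems on $S_{m,\lambda}$ are constant. The top classes of the moving punctual fibres are preserved by coordinate changes and by permutations of equal parts, and the origin factor does not vary with the configuration.
\item $\overline S_{m,\lambda}$ has normalization $Z_\lambda=\prod_k\Sym^{a_k(\lambda)}X$, built from the whole cone $X=\CC^2/G$. This is rationally smooth and contractible, so $\operatorname{IC}_{\overline S_{m,\lambda}}\cong\nu_{\lambda*}\QQ_{Z_\lambda}[2\ell(\lambda)]$.
\end{enumerate}
With these, the summand $\operatorname{IC}_{\overline S_{m,\lambda}}\otimes V_{(n-m)\delta,j}[-j]$ contributes one copy of $V_{(n-m)\delta,j}\cong I_{n-m}^{2(n-m)+j}$ in degree $2n+j-2\ell(\lambda)$. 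That is a shift by $q^{2m-2\ell(\lambda)}$, and summing over partitions gives $F$. With your contraction lemma you can replace the second fact by computing the stalk of $\operatorname{IC}$ at the vertex, since $\nu_\lambda^{-1}(n[0])$ is a single point. This still requires the first fact.

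Finally, $\pi_n$ is not semismall at the origin. For $\mu_r$ with $r\ge3$, the fibre $P_2$ contains the projective threefold $D_r$ of Theorem \ref{qfig:thm-core-two}. Correspondingly, $K_2=q^2+3q^4+q^6$ occupies three perverse degrees. So your fallback, the general decomposition theorem with shifts, is the actual route. This is harmless, since your image argument is insensitive to shifts.
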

    \begin{rmk}
        Note that a generating series was not given.
        By the Euler product factorization in \cite{stanley1986enumerative},
        we can always write such a generating series as the following infinte Euler product
        \[\prod_{m\geq 1} (\frac{1}{1-q^m})^{c_m}\]
        for constants $c_m$, but there are essential difficulties underlying the unifying solution of such power constants in terms of $G$.
        Computationally, Buryak has come up with a very nice formula for $G=C_{r,1}$ in \cite{Buryak2012} by analyzing the punctual geometry carefully.
        The author welcomes any help and suggestions towards such unifying solution.
    \end{rmk}
    
    We study these punctual fibres in the spirit of Brian\c{c}on's work \cite{Briancon1977}.
The following family $L$ of algebras is called the \textbf{flat collision algebra},
    whose description underlies the motivic computation of punctual fibres in Section \ref{subsec:collision-integration}.
    The support factorization then recovers the graded dimensions of the collision spaces;
    their intersection pairings also require the ambient embedding.
    \begin{theorem}[Proposition \ref{qfij:flat}]
    \label{intro-punctual} 
    Let $G\subset GL_2(\CC)$ be a small finite subgroup.
    A one-parameter family $L$ of $n$ distinct free $G$-orbits tending to the origin has a unique flat limit in $\Hilb^n[\CC^2/G]$.
    Every point of the principal punctual fibre arises in this way.
    Its coordinate algebra is the special fibre of the algebra generated by coordinate evaluations along the family.
    \end{theorem}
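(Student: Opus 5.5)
The plan is to work throughout with the identification of $\Hilb^n[\CC^2/G]$ with the $G$-equivariant Hilbert scheme of $n|G|$ points of $\CC^2$ whose quotients have character $n\CC[G]$. By the discussion preceding Theorem \ref{intro-isolated-supp}, this is the type (2) principal component of \cite{GuseinZadeLuengoMelle2010}: the closure of the locus of $n$ distinct free orbits. Let $L$ be a one-parameter family over a punctured disc $\Delta^*$ (or $\operatorname{Spec}\CC((s))$) of $n$ distinct free orbits whose points all tend to $0$. Then $L$ defines a morphism $\Delta^*\to\Hilb^n[\CC^2/G]$.

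\textbf{Existence and uniqueness of the limit.} By the cited theorem of \cite{olsson2003quot} (Theorem \ref{thm:global-hilbert-smooth}), $\Hilb^n[\CC^2/G]$ is separated, so a limit, if it exists, is unique. For existence, compose with the Hilbert--Chow morphism to $\Sym^n(\CC^2/G)$. This morphism is projective, because $G$-Hilb is projective over the quotient. The composite extends over $0$ with value $n[0]$, since all orbits tend to the origin. The valuative criterion for the proper Hilbert--Chow map then gives an extension $\Delta\to\Hilb^n[\CC^2/G]$ lying in the punctual fibre.

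Concretely, the limit is the flat closure of the family of ideals $I_s\subset\CC[s][x,y]$, where $I_s$ is the ideal of the $n|G|$ points. Set $\mathcal I=\bigcap_s I_s$ inside $\CC[s,s^{-1}][x,y]$, intersected with $\CC[s][x,y]$. Then $\CC[s][x,y]/\mathcal I$ is torsion free over $\CC[s]$, hence flat. The special fibre $I_0$ is $G$-stable, has colength $n|G|$, and its character is constant in flat families, so it equals $n\CC[G]$. This gives balanced type.

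\textbf{Description of the coordinate algebra.} Let the points be $p_1(s),\dots,p_{n|G|}(s)$, possibly after a base change $s=u^e$ so that they are individually defined. The evaluation map sends $f$ to the vector $(f(p_i(s)))_i$ in $\CC((s))^{n|G|}$, and it identifies $\CC[x,y]/I_s$ with the subalgebra $A_s$ generated by the coordinate evaluations $x(p_i),y(p_i)$. Take the $\CC[[s]]$-lattice generated by polynomial evaluations, saturated so that $\mathcal A=A\cap\CC[[s]]^{n|G|}$. This is exactly the image of $\CC[[s]][x,y]/\mathcal I$. Hence the limit coordinate algebra is $\mathcal A/s\mathcal A$, the special fibre of the algebra generated by coordinate evaluations. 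This step is just the torsion-free saturation identity, checked by comparing kernels.

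\textbf{Every point of the principal punctual fibre arises this way.} This is the step I expect to be the main obstacle. The principal component is by definition the closure of the open locus $U$ of $n$ distinct free orbits. Given $Z$ in the punctual fibre, curve selection for the constructible set $U$ gives an arc $\Delta\to\overline U$ with generic point in $U$ and special point $Z$. The generic orbits need not all tend to $0$: the Hilbert--Chow image of the arc goes to $n[0]$, but the arc only guarantees this at $s=0$. Since the image at $0$ is $n[0]$, every orbit's support does converge to $0$, which is exactly the hypothesis on $L$. The remaining subtlety is that the arc may be defined only after a finite base change. This is harmless, because reparametrization does not change the flat limit.

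The genuine difficulty is ensuring that the arc lands in $U$ rather than in a boundary component of the punctual or equivariant Hilbert scheme. Here I would use smoothness and connectedness: by Theorem \ref{thm:global-hilbert-smooth} the balanced Quot scheme is smooth, and it is connected for small $G$, being type (2) $=$ type (3). So the balanced Quot scheme is irreducible and equals the closure of $U$, and the arc can be chosen with generic point in $U$.
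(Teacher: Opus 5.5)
Your description of the coordinate algebra contains a step that fails. The lattice $\mathcal A=A\cap\CC[[s]]^{N}$ is the integral closure of $\CC[[s]]$ in the generic coordinate algebra, i.e.\ the normalization of the evaluation lattice, not the lattice itself. Its special fibre need not be punctual.

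Already for $G=\{\pm1\}$ and the free orbit $\{(s,0),(-s,0)\}$ this fails:
\begin{itemize}
\item The evaluations of $\CC[[s]][x,y]$ form the lattice $\{(a,b):a\equiv b \bmod s\}$. Its special fibre is $\CC[x]/(x^2)$, which is the correct limit $(y,x^2)$.
\item Your saturated lattice is $\CC[[s]]^2$, whose special fibre is $\CC\times\CC$.
\end{itemize}
So the claimed equality of $\mathcal A$ with the image of $\CC[[s]][x,y]/\mathcal I$, and the ``comparison of kernels'', are false.

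The algebra must not be saturated. The collision algebra is $L=\operatorname{im}(\mcO[x,y]\to E)$, where $E$ is the \'etale coordinate algebra of the generic fibre. $L$ is torsion-free automatically, and it is the \emph{ideal} $I_K\cap\mcO[x,y]$, with $I_K$ the generic ideal, that is saturated. With this correction you can finish as follows. A finite flat quotient with generic ideal $I_K$ must have ideal $I_K\cap\mcO[x,y]$, by torsion-freeness; this is the paper's uniqueness argument. Hence your limit from the valuative criterion has coordinate ring exactly $L$, and the limit algebra is $L/sL$.

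Two smaller points in that paragraph need attention. First, torsion-freeness gives flatness but not finite colength; finiteness must come from the points tending to $0$. Second, when the points are only defined over $\CC((u))$ with $s=u^e$, check that forming $L$ commutes with $\CC[[s]]\to\CC[[u]]$ (it does, by flatness), rather than evaluating in $\CC((s))^N$.

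Apart from this, your route differs from the paper's. You get existence from properness of the Hilbert--Chow map and uniqueness from separatedness. The right justification for properness is Proposition \ref{prop:support-map}: restrict the ordinary Hilbert--Chow morphism to the closed fixed locus. It is not a statement about $G$-Hilb.

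The paper instead argues directly on $L$:
\begin{itemize}
\item The characteristic polynomials of $x,y$ on $E$ have coefficients that are symmetric functions of coordinates tending to $0$, so $x,y$ are integral.
\item Hence the monomials $x^ay^b$ with $a,b<N$ span $L$, and $L$ is free of rank $N$.
\item Both polynomials reduce to $T^N$, which gives punctuality without appeal to Hilbert--Chow.
\end{itemize}
Your version is shorter. The paper's is self-contained and produces the explicit finite spanning set used in Lemma \ref{qfij:minors}.

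Finally, the ``genuine difficulty'' in your last paragraph does not arise. The statement concerns the principal punctual fibre, i.e.\ points $Z\in\overline U$. Choose an irreducible curve in $\overline U$ through $Z$ that meets $U$, then normalize and complete; this gives the required arc, exactly as in the paper. Appealing to irreducibility of the whole balanced Quot scheme via type (2) $=$ type (3) is unnecessary. That identification is only asserted in the paper's introduction, so it should not carry weight here.
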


    Another important feature of Nakajima's theory is the Fock space description of total cohomology of $\Hilb^nS$ for smooth surface $S$.
    Similar theory can be obtained but with genuine new Hall-type algebras which was genuinely novel to the best of author's knowledge.
    \begin{theorem}[Proposition \ref{prop:punctual-action}]
\label{intro-hall-action}
    Let $G\subset GL_2(\CC)$ be a small finite subgroup.
    The Borel-Moore Hall algebra of balanced punctual $\CC[x,y]\rtimes G$-modules acts on
    \[\mathbb V_G=\bigoplus_{n\geq0}H^*(\Hilb^n[\CC^2/G],\QQ),\]
    preserving the collision subspace $\mathcal I_G=\bigoplus_{n,d}I_n^d(G)$.
    The series $B_G$ and $K_G$ are the graded characters of this representation and its collision submodule,
respectively.
    \end{theorem}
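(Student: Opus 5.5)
We construct the action by Borel--Moore convolution along Hecke-type correspondences, following the pattern of Nakajima's construction of Heisenberg operators. The construction proceeds in the following steps.

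\textbf{Step 1: the algebra.} Let $\mathfrak M_m$ denote the stack of balanced punctual $\CC[x,y]\rtimes G$-modules of character $m\CC[G]$ supported at the origin. Let $\mathfrak E_{m_1,m_2}$ be the stack of short exact sequences $0\to M_1\to M\to M_2\to 0$ of such modules. The Hall product on $\mathcal H_G=\bigoplus_m H^{BM}_*(\mathfrak M_m,\QQ)$ is $(p_1\times p_2)^!$ followed by $q_*$, where $p_1,p_2,q$ are the three projections from $\mathfrak E$. Because the extension stack is not smooth over $\mathfrak M_{m_1}\times\mathfrak M_{m_2}$, the pullback must be taken as a virtual (refined Gysin) pullback. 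We take the two-term complex $\Hom\to\Ext^1$ of $\CC[x,y]\rtimes G$-modules as the perfect obstruction theory. This uses that $\CC[x,y]\rtimes G$ has global dimension $2$, and Serre duality on $[\CC^2/G]$ truncates the $\Ext^2$ term. Associativity then follows from the standard base-change argument on the stack of two-step filtrations.

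\textbf{Step 2: the action.} For each $m$ we use the nested correspondence
\[Z_{n,m}=\{(I',I)\in\Hilb^{n+m}[\CC^2/G]\times\Hilb^n[\CC^2/G]: I'\subset I,\ I/I'\in\mathfrak M_m\}.\]
This carries maps $\pi_1,\pi_2$ to the two Hilbert schemes and a map $\rho$ to $\mathfrak M_m$. A class $c\in H^{BM}(\mathfrak M_m)$ acts on $\gamma\in H^*(\Hilb^n)$ by
\[c\cdot\gamma=\mathrm{PD}^{-1}\,\pi_{1*}\bigl(\rho^!c\cap\pi_2^*\gamma\cap[Z_{n,m}]^{vir}\bigr),\]
with $[Z_{n,m}]^{vir}$ again coming from the $\Hom/\Ext^1$ obstruction theory. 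Two properties make this well defined:
\begin{itemize}
\item Theorem \ref{thm:global-hilbert-smooth} gives smoothness of the Hilbert schemes, so Poincar\'e duality identifies $H^{BM}$ with $H^*$.
\item $\pi_1$ is proper, since the extension is punctual and the fibres lie in a punctual Quot scheme, which is projective.
\end{itemize}
Compatibility with the Hall product is checked on the fibre product $Z_{n+m_1,m_2}\times_{\Hilb}Z_{n,m_1}$, compared with the two-step nested scheme mapping to $\mathfrak E_{m_1,m_2}$. Functoriality of virtual pullbacks under compatible obstruction theories gives the equality of cycles. I expect this comparison of obstruction theories to be the main obstacle, because the perfect obstruction theories must be checked to form a compatible triangle. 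This is where $\Ext^2$-vanishing for punctual modules against ideals, which comes from the smallness of $G$, has to be used.

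\textbf{Step 3: preserving $\mathcal I_G$.} By Theorem \ref{thm:support}, $I^d_n(G)$ is the image of $H_c\to H$. So it suffices to show that the operators commute with the forgetful map from compactly supported cohomology. The same correspondence acts on $H_c$, because $\pi_2$ restricted to $Z$ is proper relative to compact supports: punctual extensions stay in a compact region over any compact set of the base. The natural transformation $H_c\to H$ commutes with the proper pushforward, pullbacks and cap products used in Step 2. Hence the image of $H_c$ is stable under the action.

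\textbf{Step 4: characters.} The graded characters of $\mathbb V_G$ and $\mathcal I_G$ are, by definition, $B_G$ and $K_G$. The content of this claim is therefore that the $q$-grading shifts by the virtual dimension of $Z_{n,m}$, and the $t$-grading shifts by $m$. The factorization $B_G=F\cdot K_G$ of Theorem \ref{thm:support} is then compatible with the module structure, with the Nakajima operators from the free locus acting on the factor $F$.
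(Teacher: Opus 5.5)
Your architecture matches the paper's proof: nested ideals $I'\subset I$ with a virtual pullback, flag compatibility via $R\Hom$ triangles, and preservation of $\mathcal I_G$ from properness of both projections. Your Step 3 is exactly the paper's argument using $s(J',J)=J$, and the characters then hold by definition. But Step 1 is wrong as stated, and the error breaks Step 2.

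$\Ext^2$ between punctual modules does not vanish. Global dimension $2$ only kills $\Ext^{\ge 3}$. Serre duality identifies $\Ext^2_A(M_2,M_1)$ with $\Hom_A(M_1,M_2\otimes\chi)^\vee$, which is typically nonzero. For instance, take $M=\CC[G]$ with $x,y$ acting by zero (charge $\delta$). All Koszul differentials vanish, so $\Ext^2_A(M,M)=\Hom_G(\Lambda^2U\otimes M,M)$ has dimension $|G|$. Now $\chi_A(M_2,M_1)=E(\bfw_2,\bfw_1)$ depends only on the charges, while $\dim\Ext^2_A(M_2,M_1)$ varies. Hence $\dim\Hom_A-\dim\Ext^1_A$ is not constant. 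So the two-term complex $\Hom\to\Ext^1$ gives no virtual pullback of fixed degree along $p_1\times p_2\colon\mathfrak E_{m_1,m_2}\to\mathfrak M_{m_1}\times\mathfrak M_{m_2}$.

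What is needed is the full Koszul complex $C^\bullet=\Hom_G(\Lambda^\bullet U\otimes M_2,M_1)$, which has amplitude $[0,2]$. The classical fibre is $[\ker D_1/C^0]$. One takes the derived zero locus of $D_1$, keeping $\Ext^2$ as obstruction space. The resulting virtual relative dimension is $-E(\bfw_2,\bfw_1)$, which vanishes on balanced charges. This is the Kapranov--Vasserot construction the paper uses.

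This correction also relocates the obstacle you flagged in Step 2. Take $J''\subset J'\subset J$ with $S_1=J'/J''$, $S_2=J/J'$ and $S=J/J''$. The sequence $0\to J'\to J\to S_2\to 0$ gives the triangle $R\Hom_A(S_2,S_1)\to R\Hom_A(J,S_1)\to R\Hom_A(J',S_1)$. Its first term has amplitude $[0,2]$, and $\Ext^2_A(S_2,S_1)$ appears in it as a quotient of $\Ext^1_A(J',S_1)$. The two composites have virtual ranks $\chi_A(J,S_2)+\chi_A(J',S_1)$ and $-\chi_A(S_2,S_1)+\chi_A(J,S)$. These agree precisely because $\chi_A(S_2,S_1)$ includes the $\Ext^2$ term; with your truncated Hall complex they differ by $\dim\Ext^2_A(S_2,S_1)$.

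On the action side your two-term complex $R\Hom_A(I,I/I')$ is correct. However, $\Ext^2_A(J,S)=0$ holds for every finite $G$, because invariant ideals have projective dimension at most one (Koszul resolution plus exactness of invariants). Smallness plays no role there.

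Two smaller corrections. First, the operator should read $\pi_{1*}p^!(c\boxtimes\mathrm{PD}\,\gamma)$ with $p=(\rho,\pi_2)$. You should not form a separate $\rho^!c$ and then cap again with $[Z_{n,m}]^{vir}$. Second, drop the last sentence of Step 4. The Hall algebra here contains only modules supported at the origin, so it has no free-locus Nakajima operators. Nothing in the statement, nor in the paper, makes $B_G=F\cdot K_G$ a factorization of modules.
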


We now describe the organization of the paper.
Section \ref{sec:preliminaries} reviews the Quot functor of ideals,
McKay and special McKay correspondence, and the ordinary and motivic surface series.
Section \ref{sec:punctual-reduction} establishes the finite stratification of an effective surface stack,
the motivic factorization thereupon, and the initial treatment over stacky curves.
Section \ref{sec:support} proves theorem \ref{intro-isolated-supp} via BBDG's decomposition theorem of the Hilbert-Chow morphisms;
it identifies $K_G$ with compact intersection images and explains their computation using the ambient normal geometry.
Section \ref{sec:formation} develops the geometry of the equivariant punctual fibres in the spirit of Brian\c{c}on's ideas,
and computes scalar collision families;
its final subsections reconstruct higher collisions from finite jets of evaluation lattices and recover their motives by twisted integration.
Section \ref{sec:hall-action} proves theorem \ref{intro-hall-action} by constructing the punctual Hall algebra and its Borel-Moore action on the total Hilbert cohomology.
It proves preservation of the compact images, and extends the construction to $\Quot^\alpha\mcX$.
It concludes with the vacuum, hyperbolic coefficients, and finite grading bounds for resolutions.
    
    \subsection*{Notation}
    We fix the following notations:
    \begin{table}[H]
        \begin{tabular}{ll}
        $G$ & (small unless otherwise specified) finite subgroups of $GL_2(\CC)$,\\ $G_0=G\cap SL_2\CC$ & finite subgroup of $SL_2(\CC)$,\\ 
        $\Gamma=G/G_0\cong\det(G)$ & the cyclic quotient;\\ $\rho_0,\rho_1,...,\rho_s$ & irreducible representations of $G$ with $\rho_0$ trivial,\\
        $ U:=\CC^2 \curvearrowleft GL_2(\CC)\supset G$ & the defining representation of $G$, \\     $\chi = \det U$ & its determinant character; \\ 
        $\mcX\ra X$ & an orbisurface and its coarse moduli scheme,\\ 
        $X_G:= \CC^2/G$ & the (generally non-Gorenstein) affine singularity,\\ 
        $Y_G\ra X_G$ & any minimal resolution of $X_G=\CC^2/G$,\\ 
        $\mcX_G := [\CC^2/G]$ & the quotient stack;\\ 
        $\lambda=(\lambda_1\geq\cdots\geq\lambda_r>0)\vdash n$ & \\ 
        $=n^{a_n}(n-1)^{a_{n-1}}\cdots 2^{a_2}1^{a_1}$  & a partition of $n$ in two formats,\\ 
        $l(\lambda)=r,\quad |\lambda|=\sum_i\lambda_i=n$ & its length and size, \\ 
        $a= a(\lambda)=(a_1,...,a_n)$ & an ordered composition of integer $l$ equivalent to $\lambda$;\\ 
        $\alpha \in \ZZ \oplus \bigoplus_{c}(K_0(\CC[G_c])) $ & the type of an ideal of $\mcX = \Delta \coprod\mcU$\\
        & with $c$ ranging over stratum of $\Delta$ and $G_c$ the respective stabilizer.
        \end{tabular}
    \end{table}

     \subsection*{Acknowledgment}
    The author warmly thanks Jason Starr for his constant support and encouragements, and for bringing this question to the author's attention.
     The author also extends his gratitude to Weiqiang Wang for helpful conversations at ``\href{https://representationtheory.ca/mmrt2026/}{Modern Methods in Representation Theory}" Conference last summer which steers the author out of futile chase in Heisenberg algebra.
     
     This paper is partially supported by the ICM 2026 Graduate Student and Early Career Travel Grant program, funded by NSF award No. 2620718.

\section{Background and Preliminary}
\label{sec:preliminaries}

All spaces are over $\CC$, and cohomology has rational coefficients.
For a finite subgroup $G\subset GL_2(\CC)$, put $R=\CC[x,y]$,
let $U=\langle x,y\rangle$ be its coordinate representation, and put $\chi=\det U$.
Write $\rho_0,\ldots,\rho_s$ for its irreducible representations, with $\rho_0$ trivial, and set
\[ d_i=\dim\rho_i,\qquad \delta=(d_0,\ldots,d_s),\qquad |\mathbf v|=\sum_i d_iv_i.
\]
As $\CC[G] =\oplus \rho_i^{\oplus d_i}$, $\delta$ is the \textbf{dimension vector} of $\CC[G]$.
We distinguish the $R$-colength $|\mathbf v|$, balanced length $n$ when $\mathbf v=n\delta$, and the global numerical type $\alpha$ introduced later in section \ref{subsec:alpha-type}.

\begin{defn}
\label{def:small}
A finite subgroup of $GL_2(\CC)$ is small if it has no non-identity pseudo-reflections.
Equivalently, every non-identity element acts without a fixed point on $\CC^2\setminus\{0\}$.
\end{defn}
Finite-order linear transformations are diagonalizable, so a non-identity transformation fixing a nonzero vector in dimension two is exactly a pseudo-reflection\footnote{whose fixed locus is of codimension 1.}.
Hence the condition of being small concerns every element in the group, not merely the common fixed subspace of the group.

\subsection{Quot functors and equivariant quotients}
\label{subsec:ideal-hilbert-functors}
Let $\mcX$ be a smooth orbisurface of finite type over $\CC$.
Following Olsson-Starr's work \cite{olsson2003quot}, our Quot functor assigns to $T$ the quotients
    \[ \mcO_{\mcX\times T}\twoheadrightarrow\mcQ=\mcO_{\mcX\times T}/\mathcal J, \]
which are finitely presented and flat with proper support over $T$ and with zero-dimensional geometric fibres.
This is a functor parameterizing ideals with certain properties.
We write $\Quot^\alpha\mcX$ after fixing the type $\alpha$ defined in section \ref{subsec:alpha-type}.

The general representability theorem is due to Olsson-Starr \cite{olsson2003quot}.
Nonetheless, motivic factorization rooted in the stratification of the stack $\mcX$ makes the following specialized definition very useful,
which is the orbifold Quot scheme given by Craw-Gammelgaard-Gyenge-Szendr\H{o}i \cite{Craw_Gammelgaard_Gyenge_Szendroi_2021a}.

\begin{defn}
\label{def:ideal-functor}
For a finite group $G$ acting on $\CC^2$ and a nonnegative dimension vector $\bfv$, the functor $\Quot^{\bfv}[\CC^2/G]$ parametrizes $G$-invariant ideals $\mcJ\subset\mcO_T[x,y]$ whose quotient is locally free over $T$ and satisfies
    \[ \rk\Hom_G(\rho_i,\mcO_T[x,y]/\mathcal J) = v_i.\]
\end{defn}

\begin{prop}
\label{prop:representability}
The scheme $\Quot^{\mathbf v}[\CC^2/G]$ is the open-and-closed multiplicity locus in $\Hilb^{|\mathbf v|}(\CC^2)^G$.
In particular it is a quasi-projective scheme, with its tautological equivariant quotient.
\end{prop}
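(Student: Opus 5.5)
We prove Proposition \ref{prop:representability} by identifying the functor $\Quot^{\bfv}[\CC^2/G]$ of Definition \ref{def:ideal-functor} with a union of connected components of the fixed locus of the finite group $G$ acting on the Hilbert scheme $\Hilb^{N}(\CC^2)$, where $N=|\bfv|$.

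\textbf{Step 1: the forgetful map.} Let $\mcJ\subset\mcO_T[x,y]$ be a $G$-invariant ideal whose quotient $Q=\mcO_T[x,y]/\mcJ$ is locally free over $T$ with isotypic ranks $v_i$.
\begin{itemize}
\item Since $\CC[G]$ is semisimple, $Q=\bigoplus_i \rho_i\otimes\Hom_G(\rho_i,Q)$ functorially in $T$.
\item Each summand $\Hom_G(\rho_i,Q)$ is a direct summand of a locally free sheaf, hence locally free.
\item Therefore $\rk Q=\sum_i d_iv_i=N$, and $\mcJ$ defines a $T$-point of $\Hilb^N(\CC^2)$.
\end{itemize}

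\textbf{Step 2: equivariant ideals are fixed points.} The group $G$ acts on $\Hilb^N(\CC^2)$ through its linear action on $\CC^2$. A family of ideals $\mcJ$ is $G$-invariant exactly when the corresponding $T$-point is fixed by every $g\in G$. Since $G$ is finite, the fixed-point functor is represented by a closed subscheme $\Hilb^N(\CC^2)^G$: it is the intersection of the graphs of the $g$ with the diagonal, which is closed because $\Hilb^N(\CC^2)$ is separated. So $G$-invariant families of colength $N$ are the same as $T$-points of $\Hilb^N(\CC^2)^G$.

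\textbf{Step 3: the multiplicity decomposition.} Over $\Hilb^N(\CC^2)^G$, the tautological rank-$N$ quotient $\mcQ$ carries a $G$-action, because $G$ fixes the points of this locus.
\begin{itemize}
\item Decompose $\mcQ=\bigoplus_i\rho_i\otimes\mcQ_i$ with $\mcQ_i=\Hom_G(\rho_i,\mcQ)$. As in Step 1, each $\mcQ_i$ is locally free.
\item The rank of a locally free sheaf is locally constant. Hence the locus where $\rk\mcQ_i=v_i$ for all $i$ is open and closed.
\item These loci, indexed by the finitely many $\bfv$ with $|\bfv|=N$, partition $\Hilb^N(\CC^2)^G$.
\end{itemize}
By Steps 1 and 2, this locus represents $\Quot^{\bfv}[\CC^2/G]$, and the restriction of $\mcQ$ is the tautological equivariant quotient.

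\textbf{Step 4: quasi-projectivity.} The Hilbert scheme $\Hilb^N(\CC^2)$ is quasi-projective: it is an open subscheme of $\Hilb^N(\mathbb P^2)$, which is projective. Closed subschemes of quasi-projective schemes are quasi-projective, so $\Hilb^N(\CC^2)^G$ is. An open-and-closed subscheme of it is then quasi-projective as well.

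\textbf{Main obstacle.} The main point to check is in Step 2: that the fixed-point functor of a finite group acting on a separated scheme is representable by a closed subscheme, and that it agrees with the functor of $G$-stable ideals even over nonreduced bases $T$. For the latter, a $T$-point of $\Hilb^N(\CC^2)$ is fixed by $g$ exactly when $g^*\mcJ=\mcJ$ as subsheaves, and this is the stated invariance. Semisimplicity in characteristic zero makes the isotypic decomposition exact, which is what gives local freeness of the $\mcQ_i$ in Steps 1 and 3.
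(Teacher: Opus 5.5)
Your proposal is correct and follows essentially the same route as the paper. You identify $G$-invariant ideals with points of the closed fixed locus $\Hilb^{|\bfv|}(\CC^2)^G$, split the tautological quotient into isotypic summands by idempotents that commute with base change, and use local constancy of their ranks to obtain the open-and-closed multiplicity loci. The paper also notes that descent identifies invariant ideals with ideals on $[\CC^2/G]$; you omit this, but it is not needed because Definition~\ref{def:ideal-functor} is already phrased in terms of $G$-invariant ideals.
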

\begin{proof}
A fixed Hilbert point is a $G$-invariant ideal;
equality of ideals leaves no additional equivariant structure to choose.
On the fixed scheme the tautological quotient decomposes into the direct summands $\rho_i\otimes\Hom_G(\rho_i,\mcQ)$.
Averaging over $G$ makes the isotypic projections commute with base change, so their ranks are locally constant.
Descent data identifies invariant ideals with ideals on the quotient stack.
The fixed scheme is closed in $\Hilb^{n|G|}\CC^2$.
\end{proof}

The same argument applies to a smooth surface with a finite-group action.
General surface stacks are handled by these charts at the support of stacky loci;
section \ref{subsec:alpha-smoothness} supplies both the precise global representability statement and its smoothness proof.

\subsection{McKay and special McKay correspondence}

 In 1980, McKay observed a natural one-to-one correspondence that every Kleinian singularities has the form of a quotient by finite groups,
$\CC^2/G_0$ for a finite subgroup $G_0$ of $SL_2(\CC)$; it is very surprising even to McKay himself that this correspondence was discovered so late in 1980 \cite{McKay1980}.
% ..., as this seems to be foreshadowed by Klein himself in the Erlangen program.
These singularities admit crepant resolutions\footnote{The admission of a crepant resolution is an equivalent definition of Klein singularities on a surface.}
$Y_{G_0}\xrightarrow{\pi} \CC^2/G$, which means $\pi^*K_X = K_Y$.
Further study \cite{Gonzalez-Sprinberg_Verdier_1983} revealed a bijective correspondence between irreducible components of the exceptional fibre of the resolutions and irreducible representations of $G_0$.
Via standard quiver operation as doubling and framing (see standard textbook on quiver theory such as \cite{kirillov2016quiver} for details),
the dual graphs of the exceptional fibre form \textbf{McKay quiver}.
Moreover, we can reconstruct the resolution $Y_{G_0}$ and its Hilbert schemes of $n$ points, $\Hilb^n Y_{G_0}$,
as the Nakajima's quiver varieties of the \textbf{McKay quiver};
this result was established in this form by Kuznetsov in \cite{Kuznetsov2007} (with earlier pioneering works \cite{HaimanHilbertQuiver,VaragnoloVasserot1999,Wang2002}).
Even further, because of the crepant nature of this resolution, Craw and his collaborators found the following isomorphisms in \cite{CrawGammelgaardGyengeSzendroi2021b}
\begin{equation}
    \label{eq:crepant}
H^*(\Hilb^nY_{G_0},\QQ)\cong H^*(\Hilb^n([\CC^2/G_0],\QQ)),
    \end{equation}
successfully relating the DM stack $[\CC^2/G_0]$ to the crepant resolution of its (singular) coarse moduli space.

Driven by curiosity, and in our case driven by the natural setting of stabilizers occurring in orbisurfaces,
we want to extend McKay type links to quotient stack $\mcX_G=[\CC^2/G]$ and its coarse moduli space $X_G=\CC^2/G$ with $G$ a small finite subgroup of $GL_2(\CC)$.
The singularities of these coarse moduli spaces turn out to admit no crepant resolution,
but the McKay-thematic link between geometry of their minimal resolutions $Y_G$ and certain subset (called \textbf{special}) of irreducible representations of $G$ is still present through the \textbf{special McKay correspondences},
where the special representations were first introduced by Wunram in \cite{wunram1988reflexive} and the correspondence was established in $A$-type by \cite{Ito2002} and in all types by Wemyss and his collaborators all of whose work are summarized in \cite{Wemyss2011},
with further comparison between derived categories of $Y_G$ and $\mcX_G$ obtained by Ishii-Ueda in \cite{Ishii_Ueda_2013}.
The most important distinction between the crepant case and discrepant case is that equation (\ref{eq:crepant}) does not hold in the discrepant case.

We give a quick overview here of the special McKay correspondence, readers who are interested in a detailed or historical account of this theory should consult \cite{Wemyss2011} or \cite{Ishii_Ueda_2013}.
As was documented in \cite[section 5]{Wemyss2011}, there is a finite list of small finite subgroup $G<GL_2(\CC)$:
denoted by types $\AAA(=C_{r,a}), \mbb{D}_{n,q}, \mbb{T}_m, \mbb{O}_m, \mbb{I}_m$ extending the finite subgroup of $SL_2(\CC)$ of type $A,D,E_6,E_7$ and $E_8$ respectively with certain conditions imposed on the parameters $a,q,m$ causing the discrepancy.
For all small groups $G$, put $N_\rho=(R\otimes\rho^\vee)^G$ and let $\widetilde N_\rho=f^*N_\rho/\text{torsion}$ be its full locally free sheaf on $Y_G$.
An irreducible representation is \textit{special} if
    \[ H^1(Y_G,\widetilde N_\rho^\vee)=0.\]
In particular for $G=C_{r,a} = <
\begin{pmatrix}
    \zeta_r & \\ & \zeta_r^a
\end{pmatrix}
>$ with $(a,r)=1$,
the monomial $xy$ corresponds to representation $\rho_{1+a}\neq \rho_0$, which is non-special.
The special McKay correspondence matches all the nontrivial special representations one-to-one with all the irreducible exceptional curves $E_\sigma$,
characterized by\footnote{The reader should be able to tell the Kronecker delta apart from the dimension vector delta defined previously.}
$c_1(\widetilde N_\rho)\cdot E_\sigma=\delta_{\rho\sigma}$.
In the crepant case, every irreducible representation is special.

\begin{rmk}
    \label{rmk:dim_vector}
The skew group ring $R\rtimes G$ is of global dimension $2$ in the crepant case, and of global dimension 3 in the non-crepant case,
with the information of $\Ext^2$ documented by dashed arrow $\rho_0 \dashrightarrow \chi$ and transported by symmetry to all nodes $\rho_i\dashrightarrow \rho_i\otimes \chi$.
Such information is also organized by Ishii-Ueda in their McKay quiver\footnote{The notion of quiver is extended so dashed arrow is allowed to capture syzygies.}, which gives the usual McKay quiver for the crepant case.
The dimension of appropriately defined quiver variety of dimension $\bfv$ over the McKay quiver for non-crepant $G$ gives the proof to lemma \ref{lem:smooth-dimension}.
\end{rmk}

\subsection{Surface series and additive realizations}
\label{subsec:surface_series}
For a variety $Z$, put $P(Z;q)=\sum_d\dim H^d(Z;\QQ)q^d$.
The variable $q$ records actual cohomological degree.
For a smooth quasi-projective surface $S$, the classical formula in \cite{Nakajima_1999,Gottsche_2000} is
\begin{equation*}
 F_S(q,t):=\sum_{n\ge0}P(S^{[n]};q)t^n =\prod_{m\ge1} \frac{(1+q^{2m-1}t^m)^{b_1(S)}(1+q^{2m+1}t^m)^{b_3(S)}} {(1-q^{2m-2}t^m)^{b_0(S)}(1-q^{2m}t^m)^{b_2(S)} (1-q^{2m+2}t^m)^{b_4(S)}}.
\end{equation*}
It is the character of the graded-symmetric algebra on copies of $H^*(S)$ in lengths $m\ge1$, shifted by degrees $2m-2$.
Odd classes give exterior powers and hence the numerator.

The motivic counterpart uses $\LL=[\AAA^1]$ and $\zeta_Z(T)=\sum_{a\ge0}[\Sym^a Z]T^a$:
\begin{equation*}
 \sum_{n\ge0}[S^{[n]}]t^n =\prod_{m\ge1}\zeta_S(\LL^{m-1}t^m) =\left(\prod_{m\ge1}\frac{1}{1-\LL^{m-1}t^m}\right)^{[S]}.
\end{equation*}
The last exponent is the geometric power structure: its coefficient parametrizes finite configurations of distinct support points,
each decorated by a punctual object \cite{GuseinZadeLuengoMelle2010}.
It is not an ordinary power of a polynomial.

For non-proper support strata an additive realization is necessary.
We use the Deligne-Hodge polynomial with compact support
    \[ E_c(Z;u,v)=\sum_{k,a,b}(-1)^k h^{a,b}(\operatorname{Gr}_{a+b}^W H_c^k(Z))u^av^b.\]
It respects open-closed decomposition and products, sends $\LL$ to $uv$,
and carries the geometric power structure to its Hodge realization.
If $Z$ is proper and $H^k(Z)$ admits a pure Hodge structure of weight $k$, then $P(Z;q)=E_c(Z;-q,-q)$.
In particular this holds for smooth projective $Z$ and for the proper pure punctual cores below.
Without such purity, Section \ref{sec:support} obtains ordinary Betti numbers from closed intersection-complex supports.

For comparison with the isolated quotient problem, let $Y_G\ra\CC^2/G$ be the minimal resolution and put $e=b_2(Y_G)$.
Its only nonzero Betti numbers are $b_0=1$ and $b_2=e$, so
\begin{equation}
\label{eq:resolution-series}
F_{Y_G}(q,t)=F(q,t)\cdot (\prod_{m\ge1}\frac{1}{1-q^{2m}t^m})^{e}.
\end{equation}
In the crepant case the Hilbert-scheme comparison \cite[Corollary 4.13]{Kuznetsov2007} identifies this with the orbifold Betti series.
The Betti number calculation for non-crepant cyclic groups $C_{r,a}$ of Bejleri-Zaimi is deferred to Section \ref{subsec:balanced-partitions},
after the reduction to isolated quotient charts.

\section{Punctual Reduction of the Stacky Surface}
\label{sec:punctual-reduction}

\subsection{The finite stratification of the stacky locus}
\label{subsec:finite-stacky-locus} 
We first separate ineffective isotropy from the effective surface geometry.
If $\mathcal G\ra\mathcal Y$ is a finite gerbe, pullback identifies their Quot functors:
locally the map is $BK\times V\ra V$, and $K$ acts trivially on $\mcO_V$, hence on every ideal and quotient.
Such ineffective descent is unique and preserves flatness and proper support.
For a smooth surface stack its ineffective rigidification has this form.
Thus the ideal geometry can be studied on the effective rigidification;
representations for the original stack are obtained by inflation.
In particular an ideal quotient is trivial under ineffective isotropy, and cannot be a positive multiple of its regular representation.
This explains why we assume all orbisurfaces involved to be effective stacks.

The finite type assumption yields the following finite stratification.
\begin{prop}
\label{prop:finite-stacky-locus}
Let $\mcX$ be an effective smooth separated DM surface of finite type, with coarse space $X$.
There is a finite stratification
\begin{equation*}
 X=X^\circ\sqcup\coprod_{a=1}^sD_a^\circ\sqcup P,
\end{equation*}
where $X^\circ$ has trivial stabilizer, each $D_a^\circ$ is a smooth curve with constant cyclic stabilizer $\mu_{r_a}$ acting with weights $(1,0)$,
and $P$ is finite.
The set $P$ includes isolated stacky points, intersections and singular points of the stacky curves,
and their stabilizer-jump points.
\end{prop}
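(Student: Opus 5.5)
We would prove Proposition \ref{prop:finite-stacky-locus} by stratifying $\mcX$ by the conjugacy class of its stabilizers and then using the local linear structure of a smooth DM stack near each point. Fix a geometric point $x\in\mcX$ with stabilizer $G_x$. Since $\mcX$ is smooth, separated and DM, the local structure theorem gives an étale neighbourhood of the form $[V/G_x]\ra\mcX$. Here $V$ is smooth affine, and after Luna/linearization at the fixed point we may take $V$ étale-locally to be $T_x=\CC^2$ with a linear action of $G_x$. Effectivity means $G_x$ acts faithfully on $T_x$, so $G_x\subset GL_2(\CC)$. For $g\neq 1$ in $G_x$, the fixed locus $\CC^{2,g}$ is then either the origin or a line; the latter happens exactly when $g$ is a pseudo-reflection.

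The first step is to analyse the locus $\Delta\subset X$ of points with nontrivial stabilizer. In each chart this locus is the image of the finite union $\bigcup_{g\neq1}V^g$, which is closed. By quasi-compactness (finite type), finitely many charts cover $\mcX$, so $\Delta$ is closed of dimension at most $1$. Its one-dimensional part is a curve $D$ with finitely many irreducible components, and these are exactly the images of the reflection lines.

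At a general point of a component of $D$, the stabilizer $G_y$ of a point on a reflection line but off the origin is the pointwise stabilizer of that line. It is generated by pseudo-reflections fixing that line, which act on the normal direction through a faithful character. Hence $G_y$ is cyclic, $G_y\cong\mu_r$, acting with weights $(1,0)$ after a choice of generator and diagonal coordinates.

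Next we remove a finite set of points from $D$. These are the singular points of $D_{\mathrm{red}}$, the intersection points of distinct components, and the points where the stabilizer jumps. Along an irreducible component the stabilizer order is upper semicontinuous and constant on a dense open set, so the jump locus is a proper closed subset, hence finite. The isolated points of $\Delta$, those with small stabilizer, are finite for the same reason. Setting $P$ to be the union of all these points, $D\setminus P$ splits into the smooth connected pieces $D_a^\circ$, with constant $\mu_{r_a}$ and weights $(1,0)$, and $X^\circ=X\setminus\Delta$.

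The main obstacle is making "constant stabilizer" rigorous along a component, including the cyclicity and weight claim uniformly and not just pointwise. To handle this, we would use the inertia stack $I\mcX\ra\mcX$, which is finite and unramified. Its restriction over the smooth locus of a component becomes étale over a dense open subset by generic flatness. Removing the complement, which is finite, gives a locally constant stabilizer group scheme. Over this open set the linear model above identifies it with $\mu_{r_a}$ acting by $(1,0)$.
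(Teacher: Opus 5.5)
Your proposal is correct and follows essentially the same route as the paper: closedness and finiteness of the stacky locus, linearization of the stabilizer action, faithfulness on the normal line at a general point of a stacky curve forcing a cyclic group with weights $(1,0)$, and discarding finitely many points to make the stabilizer constant. The differences are only in the bookkeeping. You obtain closedness from fixed loci in \'etale charts $[V/G_x]$ rather than from the open-and-closed identity section of the finite unramified inertia, and you justify the paper's ``shrinking'' step via semicontinuity and generic flatness of inertia, which your linear model already makes unnecessary, since every point of a punctured reflection line has the same pointwise stabilizer.
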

\begin{proof}
The inertia morphism is proper, quasi-finite, and unramified,
hence finite and unramified.
Its identity section is both open and closed.
The remaining inertia therefore has closed image.
This is the stacky locus.
It is Noetherian and misses the generic point, so it has finitely many irreducible components, all of dimension at most one.

At a geometric point the formal chart is a quotient of $\CC[[x,y]]$ by its finite stabilizer.
Averaging lifts of the cotangent space linearizes this action.
At a general point of a stacky curve the group fixes its tangent line pointwise.
Effectiveness makes its action on the normal line faithful,
so the group is cyclic; choosing its normal character gives weights $(1,0)$.
Shrinking each of the finitely many curves makes the stabilizer and local representation type constant.
The removed closed subsets of the curves are finite.
Add their points to the zero-dimensional components to obtain $P$.
\end{proof}

This gives three local stacky pictures: the stacky curves, the isolated stacky point and intersection of stacky curves.
The key observation for the stacky curve case is that the quotient of the reflection chart is smooth:
$\CC[[x,y]]^{\mu_r}=\CC[[x^r,y]]$.
At an isolated point the effective group has no non-identity pseudo-reflections; otherwise a fixed line would produce a stacky curve.
This third local problem is delayed for a sequel work;
but initial inspection at the example of independent roots with stabilizer $\mu_r\times\mu_s$ at their crossing have smooth coarse ring $\CC[[x^r,y^s]]$, so the theory around intersections are closer to stacky curves than isolated stacky points.

The ordinary chart leads to Brian\c{c}on's punctual Hilbert scheme; a stacky line chart adds a cyclic flag to its local algebra; an isolated chart constrains both multiplication operators by the stabilizer.
The collision of the supporting points is precisely the passage to these punctual algebras.

\subsection{Length and representation type}
\label{subsec:alpha-type}
For a zero-dimensional quotient define its type by
\begin{equation}
\label{eq:alpha-euler}
\alpha_{\mcQ}:K_0^{\mathrm{vb}}(\mcX)\longrightarrow\ZZ,
 \qquad [\mathcal E]\longmapsto \chi(\mcX,\mcQ\otimes\mathcal E).
\end{equation}
The possible types form a submonoid of the image of the zero-dimensional coherent $K$-group in $\Hom(K_0^{\mathrm{vb}}(\mcX),\ZZ)$.
Concretely, by our finite stratification, $\alpha $ is a class in the following lattice of length and correction representation at each component of the stacky loci
\[\ZZ \oplus \bigoplus_{c}(K_0(\CC[G_c])).\]
We call $\alpha$ \textbf{admissible} when it occurs for an ideal quotient,
and define $\Quot^\alpha\mcX$ by equality $\alpha= \chi(\mcX,\mcQ\otimes -)$ on every geometric fibre.
This is the representation-sensitive Hilbert polynomial of Olsson-Starr.
It is constant on connected flat families with proper support, so its loci are open and closed \cite[Lemma 4.3]{olsson2003quot}.

Here is its concrete relation to local characters.
At each coarse support point $p$, let $V_p$ be the finite quotient representation in a chart with stabilizer $G_p$.
Put \begin{equation}
\label{eq:local-signed-defect}
n_p=\dim V_p^{G_p},\qquad \beta_p=[V_p]-n_p[\CC[G_p]],\qquad n=\sum_pn_p.
\end{equation}
The trivial-representation coefficient of $\beta_p$ is zero.
If $V_p=\bigoplus_i\rho_i^{v_{p,i}}$, its other coefficients are $v_{p,i}-n_p\dim\rho_i$, and may be negative.
Since taking $G$-invariant is an exact functor, a filtration by powers of the maximal ideal gives
\begin{equation*}
 \alpha_{\mcQ}([\mathcal E]) =n\operatorname{rk}\mathcal E +\sum_p\dim(\beta_p\otimes\mathcal E|_p)^{G_p}.
\end{equation*}
The last expression is interpreted additively for virtual representations.
Thus $n=\alpha_{\mcQ}([\mcO])$ is a nonnegative integer called the \textbf{support length}, equal to the ordinary length on $\mcU$,
and the balancing condition forces type $n$ to contributes balanced representations $n_p\operatorname{Reg}(G_p)$ at every stabilizer type.
Such support length is different from both the local $R$-colength and rational stack degree given by:
\begin{equation*}
 \deg_{\mcX}\mcQ =\sum_p\frac{\dim V_p}{|G_p|} =n+\sum_p\frac{\dim\beta_p}{|G_p|}.
\end{equation*}
There is a corresponding coarse support map.
Tameness makes $\pi_*$ exact and compatible with base change, so $\pi_*\mcQ$ is a flat quotient of $\mcO_X$ of ordinary length $n$.
Its Hilbert-Chow cycle defines a Hilbert-Chow morphism (or \textbf{HC morphism} for short)
\[ h_\alpha:\Quot^\alpha\mcX\longrightarrow\operatorname{Sym}^nX,
 \quad \mcQ\longmapsto\sum_pn_p[p].\]

For stacky curve cases, Mochizuki has developed the right tool in his early-year work \cite{MochizukiParabolic2002} on parabolic Hilbert schemes.
For such a parabolic type one can refine this target by recording the support cycles of the flag jumps separately.
For example, at a reflection point of order $r$ the reduced residual gerbe has $V_p=\rho_0$.
Its integer length is $1$,
its stack degree is $1/r$, and its defect is $-\rho_1-\cdots-\rho_{r-1}$.
By contrast the ideal $(x^r,y)$ has regular representation, integer length $1$,
stack degree $1$, and zero defect.

The local representation groups must be glued, not put into an unqualified direct sum.
When a point with stabilizer $H$ specializes into a chart with stabilizer $G\supset H$, its representation contributes $\operatorname{Ind}_H^G V$.
This is the representation on the translates of its local algebra.
Frobenius reciprocity gives
\[ \dim(\operatorname{Ind}_H^G V)^G=\dim V^H,
 \qquad \operatorname{Ind}_H^G\operatorname{Reg}(H) =\operatorname{Reg}(G),
\]
so integer length and signed defects obey the same incidence rule.
Several incoming supports contribute the sum of their induced representations.
Along a stratum the character data are transported with its stabilizer local system.
Equation \eqref{eq:alpha-euler} records their intrinsic global numerical image, including these relations.

\begin{rmk}[Numerical and local balance]
\label{rmk:numerical-local-regular}
The numerical regular class is $\alpha_{n,\mathrm{reg}}=n\operatorname{rk}$.
Local balance means $\beta_p=0$ separately at every support point.
It implies numerical balance.
Recovering every local character from a global functional requires a separate detection statement; a generating bundle or the resolution property alone does not assert independent detection at each support point.
Indeed the character of a vector bundle along a connected constant-stabilizer stratum is locally constant,
so the pairing records aggregate defects there.
We retain explicit chart representation vectors whenever a calculation needs this finer information.
For $[\CC^2/G]$, testing with the constant bundles $\rho_i^\vee$ recovers the full quotient dimension vector,
and $\alpha=n\operatorname{rk}$ is exactly total charge $n\delta$.
General support formulas state whether they concern this numerical type or the separately imposed local regular sector.
\end{rmk}

The length used in a parabolic flag below has yet another useful normalization.
If its character colengths are $d_i=k+\sum_{j>i}\eta_j$, then the integer \eqref{eq:local-signed-defect} is $n=d_0=k+\sum_j\eta_j$.
We use $k$ for the base-ideal colength in that flag formula; its variables must not silently be identified with the numerical $n$.

\subsection{Representability, smoothness, and formal locality}
\label{subsec:alpha-smoothness}
\begin{theorem}
\label{thm:global-hilbert-smooth}
For every numerical type $\alpha$, the Quot functor $\Quot^\alpha\mcX$ is a smooth separated algebraic space locally of finite type.
If $\mcX$ has quasi-projective coarse space, it is a quasi-projective scheme; if the coarse space is projective,
it is projective.
Empty types are allowed.
The same smoothness assertion holds after imposing locally constant equivariant multiplicity types.
\end{theorem}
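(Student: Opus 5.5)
We split the statement into representability and separatedness, smoothness, and quasi-projectivity. For representability we invoke Olsson--Starr \cite{olsson2003quot}. Their Quot functor of zero-dimensional quotients of $\mcO_\mcX$ with proper support is a separated algebraic space, locally of finite type, and a quasi-projective scheme when the coarse space is quasi-projective. We fix a generating sheaf $\mcE$ in their sense; it exists locally on the coarse space, and since $\mcX$ is a quotient stack of finite type it exists globally whenever the coarse space is quasi-projective. For a generating sheaf $\mcE$, the quotient $\mcQ$ is recovered from $\pi_*\mathcal{H}om(\mcE,\mcQ)$ on $X$, and this embeds the functor into an ordinary Quot scheme on $X$.

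By \S\ref{subsec:alpha-type} the type $\alpha$ is locally constant in flat families with proper support \cite[Lemma 4.3]{olsson2003quot}, so $\Quot^\alpha\mcX$ is open and closed in the full Quot space and inherits all these properties. When $X$ is projective, properness follows from the valuative criterion. Over a DVR, the flat limit of a zero-dimensional quotient exists because $\mcX$ is separated and the supports stay in a proper coarse space; the limit has the same type since $\chi(\mcQ\otimes\mcE)$ is constant. Properness together with quasi-projectivity gives projectivity. Empty types give the empty space trivially. For locally constant equivariant multiplicity types, Proposition \ref{prop:representability} shows that the isotypic ranks are locally constant, so those loci are also open and closed.

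Smoothness is the main point, and we treat it by deformation theory. The tangent space at $[\mcJ\subset\mcO_\mcX]$ is $\Hom_\mcX(\mcJ,\mcQ)$, and the obstructions lie in $\Ext^1_\mcX(\mcJ,\mcQ)\cong\Ext^2_\mcX(\mcQ,\mcQ)$. Since $\mcQ$ is zero-dimensional, these groups are sums of local contributions at the support points. Formal locality reduces the computation to charts $[\mathrm{Spec}\,\CC[[x,y]]/G_p]$, where $G_p$ is linearized by Proposition \ref{prop:finite-stacky-locus}. In such a chart the groups become $G_p$-invariants of the corresponding groups on $\mathrm{Spec}\,\CC[[x,y]]$, and taking $G$-invariants is exact. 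Serre duality on the smooth stack identifies $\Ext^2(\mcQ,\mcQ)$ with $\Hom(\mcQ,\mcQ\otimes\omega_\mcX)^\vee$. This group need not vanish, so the trace-free part of the obstruction has to be killed differently.

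We expect this to be the main obstacle. The cleanest route avoids $\Ext^2$ entirely. Locally, $\Quot^{\bfv}[\CC^2/G]$ is the fixed locus $(\Hilb^{|\bfv|}\CC^2)^G$ of Proposition \ref{prop:representability}. The Hilbert scheme of points on a smooth surface is smooth by Fogarty, and the fixed locus of a finite (linearly reductive) group acting on a smooth scheme is smooth, by Luna's \'etale slice or by averaging the tangent action. Formal locality then makes the global statement follow from the local one. Near a point of $\Quot^\alpha\mcX$, the quotient splits canonically as a product over its support points, because supports in disjoint \'etale charts do not interact. So $\Quot^\alpha\mcX$ is \'etale-locally a product of the fixed-point Hilbert schemes of the charts $[U_p/G_p]$ with $U_p$ smooth. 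Each factor is smooth by the fixed-locus argument, and for $G_p$ trivial it is Fogarty's theorem. The only delicate point is checking that this \'etale-local product description is compatible with the gluing of types through induced representations from \S\ref{subsec:alpha-type}. We would verify this by noting that the local splitting is an \'etale morphism onto an open subset of the product of local Quot spaces, which is where the smoothness comes from.
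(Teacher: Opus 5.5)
Your proposal is correct and follows essentially the paper's route. You use Olsson--Starr for representability, separatedness and the (quasi-)projectivity assertions, and open-and-closedness for the type and multiplicity loci. For smoothness, you split a quotient over its distinct support points and identify each factor with the $G_p$-fixed locus of a Fogarty-smooth surface Hilbert scheme, which is smooth because a finite group acts on a smooth germ through an equivariant system of parameters.

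The differences are minor:
\begin{itemize}
\item The paper splits the formal deformation functor over Artinian bases, by lifting the separating idempotents, rather than using \'etale charts.
\item The paper cites Edidin--Hassett--Kresch--Vistoli for the global-quotient property that you only assert.
\item Your separate valuative-criterion argument for properness is redundant, since Olsson--Starr's projectivity theorem already covers it.
\end{itemize}
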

\begin{proof}
Olsson-Starr's Theorem 1.1 represents the Quot functor by a separated algebraic space; fixing its Hilbert polynomial is open and closed.
A regular stack of dimension two with quasi-finite diagonal is a global quotient \cite[Example 2.17]{EHKV2001}.
Olsson-Starr's Theorem 4.4 therefore gives the stated scheme and projectivity assertions \cite{olsson2003quot}.

For smoothness, take an ideal quotient with distinct coarse support points $p$.
In a deformation over an Artinian local base, the idempotents separating these supports lift uniquely.
Its deformation functor is therefore the product of the deformation functors in the completed charts $[\operatorname{Spec}\CC[[x,y]]/G_p]$.
Linearizing each chart identifies its factor with the fixed deformation functor in an ordinary surface Hilbert scheme.
That scheme is smooth by Fogarty's theorem \cite{Fogarty_1968}.
A finite group acting on a smooth complex formal germ has an equivariant system of parameters: averaging a section of the cotangent-space quotient gives such parameters.
Its fixed germ is thus a power-series ring on the invariant tangent space.
Each factor, and hence their product, is formally smooth.
The locally finite-type representing space is consequently smooth.
Prescribing multiplicity ranks selects open-and-closed loci and preserves smoothness.
\end{proof}

\begin{cor}[Formal locality]
\label{prop:locality}
At $J_p=\ker( \mcO_{\mcX}\twoheadrightarrow\mcQ)$ the formal Quot germ is the product of the equivariant formal Quot germs at its distinct supports.
Its tangent space is
\[ \bigoplus_p\Hom_{\CC[[x,y]],G_p}(J_p,Q_p).
\]
The dimensions may differ between connected components.
\end{cor}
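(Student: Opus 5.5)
The corollary has three parts: the formal product decomposition, the tangent space formula, and the remark that dimensions vary. I will take them in that order.

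\emph{Product decomposition.} This extracts the product decomposition already in the proof of Theorem \ref{thm:global-hilbert-smooth}. Let $A$ be an Artinian local $\CC$-algebra with residue field $\CC$, and let $\mcQ_A$ be a deformation of $\mcQ$ over $A$. Its support is set-theoretically the finite set of closed points $p$. Hence $\pi_*\mcQ_A$ is a finite $\mcO_X\otimes A$-algebra, and it splits as the product of its localizations at the coarse points $p$. The idempotents of this splitting lift uniquely from the special fibre, because idempotent lifting along nilpotent thickenings is unique. So $\mcQ_A$ decomposes canonically as $\bigoplus_p\mcQ_{A,p}$, and each summand is flat over $A$. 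Each $\mcQ_{A,p}$ is determined by its restriction to the completed chart $[\operatorname{Spec}\widehat{\mcO}_{p}/G_p]$, which by the linearization in Proposition \ref{prop:finite-stacky-locus} is $[\operatorname{Spec}\CC[[x,y]]/G_p]$. Conversely, a tuple of local deformations glues to a quotient of $\mcO_{\mcX\times A}$, since the supports are disjoint. Both directions are functorial in $A$, so we obtain an isomorphism of deformation functors
\[\operatorname{Def}_{\mcQ}\cong\prod_p\operatorname{Def}^{G_p}_{Q_p}.\]
Both sides are pro-representable: the left by the representing space of Theorem \ref{thm:global-hilbert-smooth}, the right by the local Quot germs. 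The formal germ is therefore the product of the equivariant formal Quot germs.

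\emph{Tangent space.} The tangent space of a product is the direct sum of the tangent spaces, so it suffices to compute a single factor. Equivariant Quot of $\CC[[x,y]]$ is the $G_p$-fixed locus of the ordinary punctual Quot germ, as in Proposition \ref{prop:representability}. Its tangent space is therefore the $G_p$-invariants of the usual Hilbert-scheme tangent space $\Hom_{\CC[[x,y]]}(J_p,Q_p)$. Taking invariants of $\Hom$ gives exactly $G_p$-equivariant homomorphisms. This identification can be justified either by the fixed-point scheme description or directly: a first-order deformation $J_p\subset\CC[[x,y]]\otimes\CC[\epsilon]$ is $G_p$-stable exactly when the corresponding homomorphism is equivariant. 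This yields
\[\bigoplus_p\Hom_{\CC[[x,y]],G_p}(J_p,Q_p).\]
Finally, $Q_p$ is finite-dimensional, so $\Hom(J_p,Q_p)$ is unchanged by passing between $J_p$ and its completion.

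\emph{Variation of dimension.} By Theorem \ref{thm:global-hilbert-smooth} the space is smooth, so the tangent dimension equals the local dimension. The local dimension can change between components because different support configurations occur on different components. For example, a free point contributes $2$, while a point at an isolated stacky point contributes the dimension of the fixed locus, computed from the Ishii-Ueda quiver in Remark \ref{rmk:dim_vector}. A single example of this kind suffices, and no uniform formula is claimed.

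\emph{Main obstacle.} The delicate step is the first one: checking that deformations over the stack $\mcX$ genuinely localize to the completed charts. The idempotent splitting must be carried out on the coarse side using exactness of $\pi_*$, which holds by tameness as stated in Section \ref{subsec:alpha-type}. It must then be transported back to the stack. Gluing back requires that a quotient supported at $p$ is the same as a module over the formal chart; this holds because the module is killed by a power of the maximal ideal at $p$.
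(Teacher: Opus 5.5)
Your product decomposition and tangent-space computation follow the paper's argument. The paper's proof just cites the unique lifting of the support-separating idempotents from the proof of Theorem \ref{thm:global-hilbert-smooth}, which splits the deformation functor over the completed charts $[\operatorname{Spec}\CC[[x,y]]/G_p]$. It then uses the usual tangent space $\Hom(J_p,Q_p)$ of an ideal, whose $G_p$-invariant part you correctly extract. There is one wording slip: the germ whose $G_p$-fixed locus you take is the germ of the full Hilbert scheme at $[J_p]$, in which the support may move, not the punctual Hilbert scheme. The tangent space you actually use is that of the former, so nothing breaks.

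The real problem is your third part. Support configurations are not constant on a connected component, because free orbits move into the stacky points within one component. For instance, $\Hilb^1[\CC^2/\mu_r]\cong\operatorname{Tot}\mcO_{\mathbb P^1}(-r)$ is connected, contains both free orbits and its punctual fibre, and is $2$-dimensional everywhere. A balanced local quotient contributes $D(n_p\delta)=2n_p$, exactly as much as $n_p$ free points. So contrasting ``a free point contributes $2$'' with ``a stacky point contributes the fixed-locus dimension'' produces no two components of different dimension, and the example you promise is never given.

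The correct mechanism is the following. On a component, the defect $\beta_p$ at each isolated stacky point is locally constant. When the stacky locus is isolated, your tangent formula together with $D(\mathbf v+m\delta)=D(\mathbf v)+2m$ (Lemma \ref{lem:smooth-dimension}) gives dimension $2n+\sum_p\bigl(D(\mathbf v_p)-2n_p\bigr)$. Each bracket depends only on $\beta_p$ and on the tangent representation of $G_p$. Hence components of one $\Quot^\alpha\mcX$ can differ in dimension only because the numerical type need not determine the individual defects (Remark \ref{rmk:numerical-local-regular}). The brackets do vary: for $\mathbf v_p=\rho_0+\rho_1$ the bracket is $-1$ at $\tfrac15(1,1)$, where the quotients form a $\mathbb P^1$, and $-2$ at $\tfrac15(1,2)$, where the only quotient is given by the ideal $(x^2,y)$. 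In a single chart $[\CC^2/G]$, $\alpha$ is the full dimension vector, so Lemma \ref{lem:smooth-dimension} gives pure dimension and nothing varies.
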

\begin{proof}
This is the support splitting in the preceding proof, together with the usual tangent-space description for ideals.
\end{proof}

This smoothness theorem concerns $\Quot^\alpha \mcX$.
Fixing the support $Z$ cuts out a punctual scheme $\Quot^\alpha_Z\mcX$,
which may be singular or reducible.
Formal locality separates disjoint supports and it reduces the geometric question to the punctual fibres, 
whose geometries are the Brian\c{c}on-type questions taken up in section \ref{sec:formation}.

\subsection{Dimensions in quotient charts}
For finite-dimensional representations $V,W$, write $\langle V,W\rangle_G=\dim\Hom_G(V,W)$, and define
\[E(\mathbf v,\bfw) =\langle V_{\mathbf v},V_{\bfw}\rangle_G -\langle U\otimes V_{\mathbf v},V_{\bfw}\rangle_G +\langle\chi\otimes V_{\mathbf v},V_{\bfw}\rangle_G,
\qquad V_{\mathbf v}=\bigoplus_i\rho_i^{v_i}.\]
Recall that subscripts of dimension vectors in $K_0(\CC[G])$ are irreducible representations of $G$,
so amusingly we are actually abusing the notation with $v_0$.

\begin{lem}[\textit{cf.} remark \ref{rmk:dim_vector}]
\label{lem:smooth-dimension}
Every nonempty $\Quot^{\mathbf v}[\CC^2/G]$ is smooth of pure complex dimension
\begin{equation}
\label{eq:dimension}
D(\mathbf v)=v_0+v_{\chi^{-1}}-E(\mathbf v,\mathbf v).
\end{equation}
Moreover,
\[ D(n\delta)=2n,\qquad D(\mathbf v+n\delta)=D(\mathbf v)+2n \]
whenever the indicated charge vectors are nonnegative.
\end{lem}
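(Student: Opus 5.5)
Smoothness of every nonempty $\Quot^{\mathbf v}[\CC^2/G]$ is already given by Theorem \ref{thm:global-hilbert-smooth}: by Proposition \ref{prop:representability} it is an open-and-closed piece of the smooth fixed scheme $\Hilb^{|\mathbf v|}(\CC^2)^G$. So the task is only to compute the local dimension at an arbitrary point $J$. By Corollary \ref{prop:locality} the tangent space there is $\Hom_{R,G}(J,Q)$ with $Q=R/J$ and $V_Q\cong V_{\mathbf v}$. Smoothness makes the dimension equal to $\dim\Hom_{R\rtimes G}(J,Q)$. I will compute this Hom via an equivariant Euler characteristic and check that the correction terms are independent of the point.

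First I write the equivariant Koszul resolution of $\CC$ as an $R\rtimes G$-module, $0\to R\otimes\chi\to R\otimes U\to R\to\CC\to0$. Tensoring with a finite-dimensional module $M$ gives a projective resolution of $M$, using that $R$ is a polynomial ring and the generators are in degrees $0,1,2$. Applying $\Hom_{R\rtimes G}(-,N)$ then shows that, for finite-length modules $M,N$,
\[\sum_i(-1)^i\dim\Ext^i_{R\rtimes G}(M,N)=\langle V_M,V_N\rangle_G-\langle U\otimes V_M,V_N\rangle_G+\langle\chi\otimes V_M,V_N\rangle_G=E(\mathbf v,\bfw).\]
Here the sign conventions on $U$ versus $U^\vee$ have to be checked against the paper's definition of $E$. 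Next I apply $\Hom_{R\rtimes G}(-,Q)$ to $0\to J\to R\to Q\to0$. This yields
\[0\to\Hom(Q,Q)\to\Hom(R,Q)\to\Hom(J,Q)\to\Ext^1(Q,Q)\to0,\]
since $R$ is projective. The same sequence gives $\Ext^1(J,Q)\cong\Ext^2(Q,Q)$.

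Since $\Hom_{R\rtimes G}(R,Q)=Q^G$ has dimension $v_0$, we get $\dim T_J=v_0-\dim\Hom(Q,Q)+\dim\Ext^1(Q,Q)=v_0-E(\mathbf v,\mathbf v)+\dim\Ext^2(Q,Q)$. By equivariant local duality (Serre duality on $[\CC^2/G]$, with canonical module $R\otimes\chi$), $\Ext^2_{R\rtimes G}(Q,Q)\cong\Hom_{R\rtimes G}(Q,Q\otimes\chi)^\vee$. It remains to show this has dimension $v_{\chi^{-1}}$, independently of $J$. This is the main obstacle, and it is exactly where smallness enters. My plan is to use the surjection $Q\twoheadrightarrow Q/\mathfrak m Q$, which is cyclic with top $\rho_0$. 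A morphism $Q\to Q\otimes\chi$ is determined by the image of $1$, which is a $G$-eigenvector of weight $\chi^{-1}$ in $Q\otimes\chi$ annihilated by $J$.

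In the crepant case this recovers the usual Nakajima count. In general I will argue via Serre duality again: $\Hom(Q,Q\otimes\chi)\cong\Ext^2(Q\otimes\chi,Q)^\vee$, and comparing with $\Ext^1(J,Q)$ should reduce the count to $\Hom(J\otimes\chi^{-1},Q)$-type terms. If this direct count fails, I will fall back on the Ishii--Ueda quiver description from Remark \ref{rmk:dim_vector}. There the dashed $\Ext^2$ arrow $\rho_0\dashrightarrow\chi$ contributes a relation, and the framing from $\rho_0$ contributes $v_0$. The dimension formula is then (framing) $-$ (Euler form) $+$ (relation count at the dashed arrow), giving $v_0+v_{\chi^{-1}}-E(\mathbf v,\mathbf v)$.

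For the second statement, the key computation is $E(\delta,\mathbf w)=|G|^{-1}\sum_g\overline{(1-\operatorname{tr}U(g)+\det g)}\,\cdot(\cdots)\cdot\operatorname{tr}$, evaluated with the regular representation. The regular character vanishes off the identity, so $E(\delta,\mathbf w)=(1-2+1)\dim V_{\mathbf w}\cdot$ const $=0$, and symmetrically $E(\mathbf w,\delta)=0$. Hence $E(\mathbf v+n\delta,\mathbf v+n\delta)=E(\mathbf v,\mathbf v)$. Meanwhile $v_0$ and $v_{\chi^{-1}}$ each increase by $n$ (both characters are one-dimensional), so $D(\mathbf v+n\delta)=D(\mathbf v)+2n$. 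Taking $\mathbf v=0$, where $D(0)=0$, gives $D(n\delta)=2n$. As a sanity check, this is consistent with the principal component of $n$ free orbits being $2n$-dimensional.
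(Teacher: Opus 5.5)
Your outline is the paper's own: tangent space $\Hom_{R\rtimes G}(J,Q)$, the long exact sequence of $0\to J\to R\to Q\to0$, the Koszul Euler form, and equivariant duality $\Ext^2_{R\rtimes G}(Q,Q)\cong\Hom_{R\rtimes G}(Q,Q\otimes\chi)^*$. Your bookkeeping $\dim T_J=v_0+\dim\Ext^2(Q,Q)-E(\mathbf v,\mathbf v)$ is correct, since the $\Hom(Q,Q)$ terms cancel. The second part is also fine: $E(\delta,\mathbf w)=E(\mathbf w,\delta)=0$, $d_0=d_{\chi^{-1}}=1$ and $D(\mathbf 0)=0$.

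The gap is that you never prove $\dim\Ext^2(Q,Q)=v_{\chi^{-1}}$. You call it the main obstacle, attribute it to smallness, and then offer only conditional plans, neither of which closes it. A second application of Serre duality just returns you to an $\Ext^2$ group. In the Ishii--Ueda quiver, ``framing minus Euler form plus relations at the dashed arrow'' restates the formula rather than proving it; it would itself need exactly this $\Ext^2$ count, or a transversality statement for the relations. So the dimension formula is left unproved at its only nontrivial point.

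The missing observation is one line, and you had most of it. $Q=R/J$ is cyclic, generated by the $G$-invariant element $1$, and $J$ kills all of $Q\otimes\chi$, which as an $R$-module is just $Q$. Hence an $R\rtimes G$-map $Q\to Q\otimes\chi$ is exactly a choice of the image of $1$. That image must be $G$-\emph{invariant} in $Q\otimes\chi$, and the condition ``annihilated by $J$'' is automatic. Thus
$\Hom_{R\rtimes G}(Q,Q\otimes\chi)\cong(Q\otimes\chi)^G\cong\Hom_G(\chi^{-1},Q)$,
of dimension $v_{\chi^{-1}}$, visibly independent of $J$. The image of $1$ lies in the $\chi^{-1}$-isotypic part of $Q$; it is not a $\chi^{-1}$-eigenvector of $Q\otimes\chi$, as you wrote.

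The same argument gives $\Hom_{R\rtimes G}(Q,Q)=Q^G\cong\Hom_{R\rtimes G}(R,Q)$, which is how the paper gets $T_J\cong\Ext^1(Q,Q)$ directly. Smallness plays no role anywhere in this lemma.

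A minor point: the Koszul resolution of $\CC$ tensored over $\CC$ with $M$ resolves $M$ with $\mathfrak m$ acting by zero, not $M$ itself. The correct resolution is $R\otimes\Lambda^\bullet U\otimes M$, with differential mixing multiplication in $R$ and the action on $M$. This is harmless for the Euler form, since only the terms $\Hom_G(\Lambda^iU\otimes V_M,V_N)$ enter the alternating sum.
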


\begin{proof}
Let $Q=R/J$ be the quotient at a point.
We essentially compute the tangent space dimension with equivariant settings which was already explicit in \cite{Ishii_Ueda_2013}.
Multiplication gives
\[ \Hom_{R\rtimes G}(Q,Q)\cong Q^G \cong\Hom_{R\rtimes G}(R,Q).\]
The functor $\Hom_{R\rtimes G}(R,-)=(-)^G$ is exact.
Applying $\Hom_{R\rtimes G}(-,Q)$ to exact sequence 
\[0\ra J\ra R\ra Q\ra 0\] consequently gives
\[ T_J\Quot^{\mathbf v}[\CC^2/G]=\Hom_{R\rtimes G}(J,Q) \cong\Ext^1_{R,G}(Q,Q).\]
Equivariant duality on the smooth affine plane (\textit{cf.} remark \ref{rmk:dim_vector}) gives
\[ \Ext^2_{R\rtimes G}(Q,Q) \cong\Hom_{R\rtimes G}(Q,Q\otimes\chi)^*,\]
of dimension $v_{\chi^{-1}}$.
The two-variable Koszul resolution computes the alternating extension dimension as $E(\mathbf v,\mathbf v)$, proving \eqref{eq:dimension}.
This dimension depends only on $\mathbf v$.

Tensoring any representation with $\CC[G]$ multiplies the regular representation by its dimension.
Since $1-\dim U+\dim\chi=1-2+1=0$, we obtain $E(\delta,\bfv)=E(\bfv,\delta)=0$.
Also $d_0=d_{\chi^{-1}}=1$.
Substituting in \eqref{eq:dimension} give the dimension for unbalanced dimension vector.
\end{proof}

\subsection{Pure stacky curves and parabolic punctual fibres}
\label{qfc:subsec:curve-series}

Let $D\subset S$ be a smooth divisor and $\mcX=\sqrt[r]{(S,D)}$, with $r\ge2$.
A root chart has rings $B=\CC[[x,y]]$ and $A=B[z]/(z^r-x)$, where $z$ has character given by multiplication by $r$-th root of unity $\zeta_r$ and $B$ is invariant.
A $\mu_r$-invariant ideal $I\subset A$ has a unique decomposition
\begin{equation}
\label{curve:eq:flag}
I=\bigoplus_{i=0}^{r-1}z^iJ_i,
 \quad xJ_{r-1}\subset J_0\subset J_1\subset\cdots\subset J_{r-1}\subset B.
\end{equation}
These $J_i$'s are ideals of $B$ and the inclusions in the flag are compatible with multiplication by $z$.
The first inclusion globalizes to $J_{r-1}(-D)\subset J_0$.
Conversely such a flag defines an ideal of the root stack, so the two constructions are inverse operations in flat families.

Put
\[ k=\ell(\mcO_S/J_{r-1}),\qquad \eta_j=\ell(J_j/J_{j-1})\ge0\quad(1\le j<r),\]
and write $H^{(r)}_{k,\boldsymbol\eta}(S,D)$ for this flag space.
Its character multiplicities, invariant length, and signed defects are
\begin{equation}
\label{curve:eq:gradings}
v_i=k+\sum_{j>i}\eta_j,\qquad n=v_0=k+|\boldsymbol\eta|,\qquad \delta_i=v_i-v_0=-\sum_{j\le i}\eta_j.
\end{equation}
In particular, the balancing condition of the Hilbert scheme is equivalent to $\bfv=0= \boldsymbol{\eta}$, and $\Hilb^n\mcX \cong H^{(r)}_{n,\bold{0}} (S,D)$.

\begin{prop}[Parabolic description]
\label{curve:prop:parabolic}
The space $H^{(r)}_{k,\boldsymbol\eta}(S,D)$ is smooth of dimension $2k+|\boldsymbol\eta|$.
If all gaps vanish, it is $S^{[k]}$.
\end{prop}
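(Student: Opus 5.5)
We prove smoothness and compute the dimension by identifying $H^{(r)}_{k,\boldsymbol\eta}(S,D)$ with the open-and-closed locus of $\Quot^\alpha\mcX$ of the type determined by the character multiplicities \eqref{curve:eq:gradings}, and then reading off the tangent dimension at a point. By the decomposition \eqref{curve:eq:flag} and the remark that the flag and ideal constructions are mutually inverse in flat families, the functor of flags with fixed colengths $(k,\boldsymbol\eta)$ is isomorphic to the functor of $\mu_r$-invariant ideals on $\mcX$ of the locally constant multiplicity type $\mathbf v$ with $v_i=k+\sum_{j>i}\eta_j$. Theorem \ref{thm:global-hilbert-smooth}, in its final sentence allowing locally constant equivariant multiplicity types, then shows that this space is a smooth separated algebraic space locally of finite type. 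Smoothness is thus immediate, and the real content is the dimension.

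For the dimension, we use formal locality (Corollary \ref{prop:locality}). The tangent space splits over the distinct coarse support points. At a point off $D$ the chart is an ordinary surface chart, and a summand of length $m$ contributes $2m$. At a point of $D$ the chart is $[\operatorname{Spec}A/\mu_r]$ with $A=\CC[[z,y]]$ and $\mu_r$ acting by weights $(1,0)$. This is a quotient chart of the type in Lemma \ref{lem:smooth-dimension}, except that the group is a reflection group rather than small; the tangent computation there never used smallness. We therefore apply the formula $D(\mathbf v)=v_0+v_{\chi^{-1}}-E(\mathbf v,\mathbf v)$ with $U=\rho_1\oplus\rho_0$ and $\chi=\rho_1$, where $\rho_i$ is the character $z\mapsto\zeta_r^i z$. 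Because $\mu_r$ is abelian, $E(\mathbf v,\mathbf v)=\sum_i v_i^2-\sum_i(v_iv_{i+1}+v_i^2)+\sum_i v_iv_{i+1}=0$, with indices mod $r$. Hence $D(\mathbf v)=v_0+v_{r-1}$.

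It remains to check that this matches $2k+|\boldsymbol\eta|$. For the local contributions we have $v_0=k+|\boldsymbol\eta|$ and $v_{r-1}=k$, so $v_0+v_{r-1}=2k+|\boldsymbol\eta|$. The ordinary summands off $D$ have $\eta=0$ and contribute $2m=v_0+v_{r-1}$ as well, so the local totals add up to $2k+|\boldsymbol\eta|$. Since the global space is smooth, its dimension equals the tangent dimension at every point, which gives pure dimension $2k+|\boldsymbol\eta|$.

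When all gaps vanish, the flag collapses to $J_0=\cdots=J_{r-1}=J$ with $xJ\subset J$ automatic. The space is then the space of ideals $J\subset\mcO_S$ of colength $k$, namely $S^{[k]}$; this is the isomorphism $\Hilb^n\mcX\cong H^{(r)}_{n,\mathbf 0}$ already noted. The main obstacle is justifying that the functor identification and the tangent-space computation of Lemma \ref{lem:smooth-dimension} carry over to the non-small cyclic chart. Specifically, one must check that the duality $\Ext^2(Q,Q)\cong\Hom(Q,Q\otimes\chi)^*$ and the Koszul computation hold equivariantly for a reflection action. I would verify this directly, since the Koszul complex on $(z,y)$ is $\mu_r$-equivariant with top term twisted by $\chi=\rho_1$.
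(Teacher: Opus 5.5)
Your proof is correct, but you reach the dimension by a different route than the paper.

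\textbf{The paper's route.} It reads the flag as the chain of successive quotients $J/J(-D)\to J/J_0\to\cdots\to J/J_{r-2}\to0$ of $J=J_{r-1}$, with kernels of lengths $\eta_1,\ldots,\eta_{r-1}$. This identifies $H^{(r)}_{k,\boldsymbol\eta}(S,D)$ with Mochizuki's finite-jump parabolic Hilbert scheme, and the paper takes both smoothness and the dimension $2k+|\boldsymbol\eta|$ from Mochizuki's theorem. The fixed-locus argument of Theorem~\ref{thm:global-hilbert-smooth} appears there only as an alternative proof of smoothness.

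\textbf{Your route.} You use that alternative for smoothness, then compute the tangent dimension internally. Formal locality splits the tangent space over support points. At a point of $D$ you apply the Ext computation of Lemma~\ref{lem:smooth-dimension}, which indeed never uses smallness. In the reflection chart $[U]=[\rho_0]+[\chi]$, so $E(\mathbf v,\mathbf w)$ vanishes identically. That leaves $\dim T=v_0+v_{\chi^{-1}}=v_0+v_{r-1}=(k+|\boldsymbol\eta|)+k$.

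\textbf{Comparison.} Your argument is self-contained within the paper. It explains conceptually where $2k+|\boldsymbol\eta|$ comes from, and it puts the stacky-curve chart on the same footing as the isolated quotient charts. It is sensitive to the convention $\chi^{-1}=\rho_{r-1}$, which you have right: using $v_\chi=v_1$ instead would give $2k+2|\boldsymbol\eta|-\eta_1$, which is wrong whenever some $\eta_j$ with $j\ge2$ is nonzero. The paper's identification with Mochizuki's functor gives more than the dimension. The same identification supplies the parabolic punctual paving used in Proposition~\ref{prop:curve-punctual-series} and in the semismallness argument of Proposition~\ref{prop:curve-supports}; your tangent computation does not provide that.

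\textbf{One step to make explicit.} Off $D$ the function $x$ is a unit, so $xJ_{r-1}\subset J_0$ forces $J_0=\cdots=J_{r-1}$ locally. This is why the local gaps vanish there and the local contributions sum to $2k+|\boldsymbol\eta|$.
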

\begin{proof}
For $J=J_{r-1}$, the flag gives successive quotients of $J/J(-D)$:
\[ J/J(-D)\longrightarrow J/J_0\longrightarrow\cdots \longrightarrow J/J_{r-2}\longrightarrow0.
\]
Their successive kernels have lengths $\eta_1,\ldots,\eta_{r-1}$.
This identifies the functor with the finite-jump parabolic Hilbert functor of \cite{MochizukiParabolic2002}.
Its smoothness theorem gives the dimension.
Alternatively, smoothness follows from the fixed-locus argument above.
Equal colengths force every inclusion in \eqref{curve:eq:flag} to be equality, proving the final assertion.
\end{proof}

Given point of $D$, denote the punctual flag fibre by $P^{(r)}_{k,\boldsymbol\eta}$.
It is projective, and the local geometry is independent of the chosen point.

\begin{prop}[Punctual curve series]
\label{prop:curve-punctual-series}
The fibres $P^{(r)}_{k,\boldsymbol\eta}$ have even, pure Tate cohomology, and
\begin{equation}
\label{qfc:eq:refined-punctual}
\sum_{k,\boldsymbol\eta} P(P^{(r)}_{k,\boldsymbol\eta};q)t^k\mathbf z^{\boldsymbol\eta} =F(q,t)\prod_{j=1}^{r-1}\prod_{m\ge0} \frac{1}{1-q^{2m}t^mz_j}.
\end{equation}
For each fixed gap vector,
\begin{equation*}
 \sum_{k\ge0}P(P^{(r)}_{k,\boldsymbol\eta};q)t^k =F(q,t)\prod_{j=1}^{r-1}\prod_{a=1}^{\eta_j} \frac{1}{1-q^{2a}t^a}.
\end{equation*}
\end{prop}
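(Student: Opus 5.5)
The plan is to use a torus action and a Bia\l{}ynicki-Birula (BB) paving, and then count fixed points combinatorially. Work in the root chart $B=\CC[[x,y]]$ with $D=\{x=0\}$, and algebraize to $S=\CC^2$, $D=\{x=0\}$; this is legitimate because the punctual fibre depends only on the formal germ, as noted before the statement. The torus $T=(\CC^*)^2$ scales $x,y$, preserves $D$, and acts on the smooth space $H^{(r)}_{k,\boldsymbol\eta}(\CC^2,D)$ (Proposition \ref{curve:prop:parabolic}).

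\textbf{Paving and purity.} Choose a generic one-parameter subgroup $\lambda\subset T$ with both weights positive. Its attracting set at the origin is exactly the punctual fibre $P^{(r)}_{k,\boldsymbol\eta}$. Since the ambient space is smooth and quasi-projective, and the fixed points are isolated (they are monomial flags, finitely many for fixed $k,\boldsymbol\eta$), the BB decomposition for the attracting cells gives the following, as in Ellingsrud--Str\o mme's treatment of $\Hilb^n_0\CC^2$:
\begin{itemize}
\item $P^{(r)}_{k,\boldsymbol\eta}$ is paved by affine cells, one for each monomial flag;
\item its cohomology is even and spanned by algebraic cycles, hence pure Tate;
\item $P(P^{(r)}_{k,\boldsymbol\eta};q)=\sum_{\text{fixed }F}q^{2\dim C_F}$, where $\dim C_F$ is the number of positive $\lambda$-weights in $T_F H^{(r)}$.
\end{itemize}

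\textbf{Fixed-point combinatorics.} A fixed flag is a monomial ideal $J_{r-1}=I_\mu$, with $\mu\vdash k$, together with monomial ideals $xI_\mu\subset J_0\subset\cdots\subset J_{r-2}\subset I_\mu$. The monomials of $I_\mu\setminus xI_\mu$ are the leftmost boxes $x^{\mu'_b}y^b$, one for each $b\ge0$. For $J_0,\dots,J_{r-2}$ to be ideals, the levels must be monotone in the right sense. So a fixed flag is the data of $\mu$ together with a weakly monotone level function $b\mapsto j(b)\in\{1,\dots,r-1\}\cup\{\text{none}\}$, with $\eta_j=\#j^{-1}(j)$. I then expect the tangent-weight count to separate into two pieces:
\begin{itemize}
\item the usual arm--leg count for $\mu$, which yields $F(q,t)$;
\item an independent contribution from the boundary data.
\end{itemize}
In the boundary data, a block of $\eta_j$ consecutive rows at level $j$ should behave like a punctual Quot scheme of the rank-one $\CC[[y]]$-module $J/xJ$ along $D$. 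This would give the factor $\prod_{a=1}^{\eta_j}(1-q^{2a}t^a)^{-1}$. The $t^a$ arises because moving a boundary box shifts the base colength $k$.

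Summing over $\boldsymbol\eta$ recovers the stated product $\prod_j\prod_{m\ge0}(1-q^{2m}t^mz_j)^{-1}$, because of the identity
\[\sum_{\eta\ge0}z^\eta\prod_{a=1}^{\eta}\frac{1}{1-q^{2a}t^a}=\prod_{m\ge0}\frac{1}{1-q^{2m}t^mz}.\]
This is Euler's $q$-binomial identity after substituting $u=q^2t$. The full series therefore follows from the fixed-gap version.

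\textbf{Main obstacle and fallback.} The hardest step is the decoupling of the tangent weights: showing that the number of positive weights at a fixed flag equals the Hilbert-scheme count for $\mu$ plus an independent boundary statistic whose generating function is the product above. If a direct weight computation proves messy, I will argue motivically instead. Stratify $P^{(r)}_{k,\boldsymbol\eta}$ by the forgetful map $F\mapsto J_{r-1}\in\Hilb^{k'}_0\CC^2$; its fibres are nested punctual Quot schemes of the torsion-free rank-one $\CC[[y]]$-module $J_{r-1}/xJ_{r-1}$. These fibres are affine-paved, with class a product of $\LL$-binomials. Re-indexing by $k$ then produces the stated factors in the Grothendieck ring. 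Purity together with $P=E_c(-q,-q)$ converts this into the Betti identity.
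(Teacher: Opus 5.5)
\textbf{There is a genuine gap: the step that carries the content of the proposition is left as an expectation, and neither the heuristic nor the fallback you offer for it is correct.}

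\textbf{The main route.} Your Bia\l{}ynicki-Birula framework is sound as far as it goes. (One orientation fix: if $\lambda$ moves points toward the origin, every point of $H^{(r)}_{k,\boldsymbol\eta}(\CC^2,D)$ has a limit at $0$, and the punctual fibre is the core, paved by the repelling cells.) It gives evenness, purity, and $P(P^{(r)}_{k,\boldsymbol\eta};q)=\sum_F q^{2\dim C_F}$. The content of the proposition is the evaluation of this sum.

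The decoupling you propose fails. Write $a_b$ for the length of row $b$ of $\mu$.
\begin{itemize}
\item The ideal condition on $J_0,\dots,J_{r-2}$ only forces the level function to be monotone inside each block of rows with equal $a_b$. Across a drop $a_{b+1}<a_b$, the monomial $x^{a_b}y^{b+1}$ already lies in $xI_\mu$.
\item So the level function is not globally monotone, and rows of a given level need not be consecutive. The number of admissible labellings depends on the multiplicities of the parts of $\mu$.
\item Hence the fixed points do not split as $\{\mu\vdash k\}\times\{\text{boundary data}\}$ with an independent boundary statistic. For $r=2$, $\eta_1=1$ there is one fixed flag with $k=0$. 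There are two with $k=1$, namely $J_0=(x,y^2)$ and $J_0=(x^2,y)$ inside $J_1=\mathfrak m$. Yet $p(0)=p(1)=1$.
\end{itemize}

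What works is a re-bracketing of $\mu$. In a block of $c$ rows of length $m$, the rows carrying a level $j\ge1$ become defect modes of charge $(m,\mathbf e_j)$, and the remaining rows stay ordinary parts $m$. Labelled rows of length $0$ give the modes $(0,\mathbf e_j)$. This is the source of $t^m$: since $k=|\mu|$ is fixed at a fixed point, the factor cannot come from ``moving a boundary box''. At $q=1$ this bijection reproduces the stated product.

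You would still have to compute the tangent weights at every monomial flag, for a suitable chamber. You would then need to show that the cell dimensions are distributed like $\sum_{\mathrm{ord}}(m-1)+\sum_{\mathrm{def}}m$ over these data. The paper does not derive this either. It imports Mochizuki's parabolic punctual paving, with ordinary modes $(m,\mathbf 0)$ of dimension $m-1$ and defect modes $(m,\mathbf e_j)$ of dimension $m$. After that only the Euler identity remains, which you also use.

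\textbf{The fallback.} This does not close the gap either, because $J/xJ$ is not torsion-free.
\begin{itemize}
\item The torsion of $J/xJ$ is $\ker(x\colon B/J\to B/J)$, of length $\ell(B/(J+xB))$. For example $\mathfrak m/x\mathfrak m\cong\CC x\oplus y\CC[[y]]$ with $yx=0$.
\item For a torsion-free rank-one $\CC[[y]]$-module, the nested submodule chain with prescribed gaps is unique. So the fibre would be a point, and all of the geometry comes from the torsion, which jumps with $J$.
\item Already for $r=2$, $\eta_1=1$, the fibre of $J_\bullet\mapsto J_1$ over $J$ is $\mathbb P(J/\mathfrak m J)$. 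This is $\mathbb P^2$ over $\mathfrak m^2\in\Hilb^3_0$ but $\mathbb P^1$ over $(y,x^3)$.
\end{itemize}
So the forgetful map has no constant fibre class, and ``a product of $\LL$-binomials, re-indexed by $k$'' is unjustified. Salvaging this route would need the motives of the loci in $\Hilb^k_0\CC^2$ where $J/xJ$ has a given torsion type, plus the motives of the corresponding nested Quot schemes. That computation is no easier than the original one.
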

\begin{proof}
The parabolic punctual affine paving in \cite[Section~3]{MochizukiParabolic2002} has ordinary modes of charge $(m,\mathbf0)$ and dimension $m-1$,
$m\ge1$, and defect modes of charge $(m,\mathbf e_j)$ and dimension $m$,
$m\ge0$.
Its motivic product becomes \eqref{qfc:eq:refined-punctual} under $\LL\mapsto q^2$:
projectivity and the closed filtration by affine cells give one class in degree $2a$ for each cell of dimension $a$,
and no odd cohomology.
For $h=q^2t$, successive differences in a nondecreasing sequence of $d$ nonnegative integers give
\[ [z^d]\prod_{m\ge0}\frac{1}{1-h^mz} =\prod_{a=1}^d\frac{1}{1-h^a}.
\]
Apply this identity to each $z_j$ to obtain the second formula.
\end{proof}

To use the invariant length grading replace each $z_j$ by $tz_j$.
A defect mode then has invariant length $m+1$, as \eqref{curve:eq:gradings} requires.
For $\boldsymbol\eta=0$ and $k>0$, the fibre is the ordinary Brian\c{c}on fibre, irreducible of dimension $k-1$.
For $|\boldsymbol\eta|>0$, the top cells are dense and all components have dimension $k$; their number is
\begin{equation*}
 [t^k]\prod_{j=1}^{r-1}\prod_{a=1}^{\eta_j}\frac{1}{1-t^a}.
\end{equation*}
Indeed a positive ordinary mode loses two degrees relative to twice its length, so only defect modes contribute to degree $2k$.
For example $r=2$, $(k,\eta_1)=(2,2)$ gives $P(P^{(2)}_{2,(2)};q)=1+2q^2+2q^4$ and two irreducible components.
The root condition therefore retains an explicit enumeration while allowing failure of punctual irreducibility.

For moving support, put $\zeta_T(w)=\sum_{a\ge0}[\Sym^aT]w^a$.
The parabolic theorem gives the motivic formula
\begin{equation*}
 \sum_{k,\boldsymbol\eta}[H^{(r)}_{k,\boldsymbol\eta}] t^k\mathbf z^{\boldsymbol\eta} =\prod_{m\ge1}\zeta_S(\LL^{m-1}t^m) \prod_{j=1}^{r-1}\prod_{m\ge0}\zeta_D(\LL^mt^mz_j).
\end{equation*}
Its ordinary Betti counterpart is the surface factor $F_S(q,t)$ multiplied by one curve family for each $1\le j<r$:
\[ \prod_{j=1}^{r-1}\prod_{m\ge0} \frac{(1+q^{2m+1}t^mz_j)^{b_1(D)}} {(1-q^{2m}t^mz_j)^{b_0(D)} (1-q^{2m+2}t^mz_j)^{b_2(D)}}.
\]
Section \ref{subsec:curve} proves this assertion directly by the semismall Hilbert-Chow morphism, including for quasiprojective $S$.
The proof identifies the exact supports and explains why a reducible punctual fibre does not create a new point-supported factor at every point of $D$.
This is the first instance of the distinction between punctual cohomology and a compact intersection image,
which becomes essential at isolated stacky points.
When divisors intersect or a stabilizer jumps, one must replace the single flag by a coupled local ideal problem.
A sequel paper treats these exceptional loci separately.

\subsection{The removal of reflections}

Recall that our definition of $\Hilb^n\mcX$ forces its geometric point, the ideal sheaf $\mcI$ whose quotient has local algebra at each coarse support point $p$ isomorphic to balanced representation $n_p\CC[G_p]$.
Under this assumption, we can remove the reflection stabilizers.

\begin{prop}[Reflection descent]
\label{prop:reflection}
Let $G\subset GL(V)$, $V=\CC^2$, be finite and faithful.
Let $R_G$ be the normal subgroup generated by pseudo-reflections,
$W=V/R_G$, and $\overline G=G/R_G$.
Extension and contraction of ideals give
\[ \Hilb^n[V/G] \cong\Hilb^n[W/\overline G].
\]
The surface $W$ is an affine plane, and $\overline G$ acts,
in suitable coordinates, without pseudo-reflections.
\end{prop}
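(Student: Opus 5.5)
We first settle the classical facts about $W$ and $\overline G$, and then identify the two Hilbert schemes through the invariant ring $S=R^{R_G}$. By Chevalley--Shephard--Todd, $R_G$ is a complex reflection group, so $S=\CC[f,g]$ is polynomial with homogeneous generators and $W=\operatorname{Spec}S\cong\AAA^2$. Since $R_G$ is normal in $G$, the group $\overline G$ acts on $S$ and preserves its graded pieces. Averaging over $\overline G$ lets us choose $f,g$ spanning a $\overline G$-stable complement to $S_+^2$ in $S_+$, so the action on $W$ is linear in these coordinates. The classical step (e.g.\ Gottlieb / Prill) is that $\overline G$ has no pseudo-reflections. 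If $\bar h$ fixed a line in $W$, its preimages in $G$ would include an element fixing the corresponding divisor in $V$, because $V\to W$ is finite and surjective. That element would be a reflection, hence in $R_G$, so $\bar h=1$. We will quote this rather than reprove it.

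Next, the bijection on points. For a $G$-invariant ideal $I\subset R$, set $I\mapsto I\cap S$. For a $\overline G$-invariant ideal $J\subset S$, set $J\mapsto JR$. The key tool is that $R$ is a free $S$-module of rank $|R_G|$ with $R\cong S\otimes_\CC \CC[R_G]$ as an $S[R_G]$-module (Chevalley), and $S\to R$ is flat. Flatness gives $JR\cap S=J$ and $R/JR\cong (S/J)\otimes \CC[R_G]$ as $R_G$-modules. If $S/J$ carries $n\,\mathrm{Reg}(\overline G)$, then inducing through $G\to\overline G$ gives $R/JR\cong n\,\mathrm{Reg}(G)$, so balance is preserved. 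For the inverse, we must show that a balanced $G$-invariant $I$ satisfies $I=(I\cap S)R$. Write $Q=R/I$. Balance restricted to $R_G$ gives $Q\cong \CC[R_G]^{\oplus n|\overline G|}$, so $Q^{R_G}=S/(I\cap S)$ has length $n|\overline G|$, which equals $\ell(Q)/|R_G|$. Also $(I\cap S)R\subset I$, and $R/(I\cap S)R$ has length $|R_G|\cdot\ell(S/I\cap S)=\ell(Q)$, so equality follows. The same computation with characters shows that $S/(I\cap S)$ is balanced for $\overline G$.

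To promote this to an isomorphism of schemes, we run both constructions in families over a base $T$. Extension $J\mapsto J\otimes_S R$ preserves $T$-flatness and finiteness, because $R$ is $S$-free. Contraction is $I\mapsto I^{R_G}$; taking $R_G$-invariants is exact in characteristic zero and commutes with base change via the Reynolds operator, so the quotient $(R_T/I)^{R_G}$ stays locally free. The identities above then hold fibrewise. By Nakayama, they also hold over $T$: the natural surjection $R_T/(I^{R_G})R_T\to R_T/I$ is a map between locally free sheaves of the same rank, hence an isomorphism. This gives mutually inverse natural transformations of functors, hence $\Hilb^n[V/G]\cong\Hilb^n[W/\overline G]$ through Proposition~\ref{prop:representability}.

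The main obstacle is the length and rank comparison in the contraction direction, which is where balance is indispensable. Without it, $Q$ need not be $R_G$-free, and $(I\cap S)R$ can be strictly smaller than $I$; the residual gerbe example earlier shows exactly this failure. We would therefore check carefully that the restriction of $n\,\mathrm{Reg}(G)$ to $R_G$ is free. We would also check that length additivity for flat extension yields $\ell(R/(I\cap S)R)=|R_G|\,\ell(S/(I\cap S))$, using freeness of $R$ over $S$.
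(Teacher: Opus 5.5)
Your proposal is correct and follows essentially the same route as the paper: Chevalley--Shephard--Todd for polynomiality and freeness, exactness of $R_G$-invariants plus the equal-dimension surjection $R\otimes_S(S/J)\to R/I$ in the contraction direction, and the same surjection between locally free modules of equal rank in families. Your lifting argument for the smallness of $\overline G$ is in fact more explicit than the paper's appeal to the canonical stack.

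The one step to tighten is the extension direction. The $S[R_G]$-isomorphism $R\cong S\otimes\CC[R_G]$ you cite only controls the $R_G$-structure of $R/JR$, not its $G$-structure. As the paper does with its lift $C$, take $H$ to be a $G$-stable graded complement of $S_+R$. Then $R/JR\cong (S/J)\otimes H$ holds $G$-equivariantly, and its character vanishes off $R_G$ because of the factor $S/J$ and on $R_G\setminus\{1\}$ because of the factor $H$, so it is $n$ copies of the regular representation of $G$.
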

\begin{proof}
Put $A=\CC[V]$, $B=A^{R_G}$.
The reflection theorem makes $B$ polynomial and $A$ free of rank $|R_G|$ over $B$.
For a regular quotient $M=A/I$, exact invariants give $M^{R_G}=B/J\cong n\operatorname{Reg}_{\overline G}$.
The surjection $A\otimes_B(B/J)\ra M$ is between spaces of the same dimension $n|G|$, hence an isomorphism.

Conversely choose a graded $G$-stable lift $C$ of the coinvariant algebra $A/B_+A$; multiplication identifies $A=B\otimes C$,
and $C|_{R_G}$ is regular.
If $N\cong n\operatorname{Reg}_{\overline G}$, the character of $C\otimes N$ vanishes outside $R_G$ by the second factor,
and on every non-identity element of $R_G$ by the first.
Its dimension is $n|G|$, so it is regular.
In families invariants commute with base change; the same surjection is between locally free modules of equal rank.
These inverse operations therefore work in families and after completion.
The residual action is small by the canonical-stack description of the reflection quotient.
\end{proof}

Let $\mcX^{\mathrm{can}}$ be the smooth canonical stack of the coarse quotient surface $X$; it is a scheme off the finite singular set.
The canonical morphism $\mcX\ra \mcX^{\mathrm{can}}$ is locally the reflection quotient above.
Pullback and invariant contraction also preserve local balance when support moves: for $v\in V$ over $w\in W$ the stabilizers fit into $1\ra (R_G)_v\ra G_v\ra\overline G_w\ra1$.
Surjectivity follows by adjusting a lift by an element of $R_G$.
The quotient of the smooth local chart by $(R_G)_v$ is a neighborhood in the smooth surface $W$.
The converse reflection theorem therefore makes $(R_G)_v$ a reflection group, so the same coinvariant argument applies in completed local coordinates.
These canonical operations agree on overlaps, giving
\begin{equation}
\label{eq:canonical-reduction}
\Hilb^n(\mcX) \cong\Hilb^n(\mcX^{\mathrm{can}}).
\end{equation}
Thus all residual punctual problems in this moduli have small groups.
For general type $\alpha$ the parabolic defects remain, and \eqref{eq:canonical-reduction} is not asserted for $\Quot^\alpha$.

\subsection{Assembly by numerical type}
\label{subsec:motivic-assembly}
Let $\Lambda_{\mcX}$ be the image of the zero-dimensional coherent $K$-group under the Euler pairing,
and put $u([\mathcal E])=\operatorname{rk}\mathcal E$.
Write $\mathbf x^\alpha$ for its class monomials.
We use a completion graded by a positive length in which each coefficient under consideration is a finite sum.
The following identity is the additive form of support assembly.
For the strata $U,D_a^\circ,\{p\}$ of Proposition \ref{prop:finite-stacky-locus}, let $\iota_p:R(G_p)\ra\Lambda_{\mcX}$ send a local representation to its Euler-pairing class.
Let $\beta_{a,j}$ denote the class of the initial character interval $1+\varepsilon_a+\cdots+ \varepsilon_a^{j-1}$ at a point of $D_a^\circ$; it is independent of the point on a connected stratum.
Set
\[ \mathscr P_{\rm sm}(\mathbf x) =\prod_{m\ge1}\frac{1}{1-\LL^{m-1}\mathbf x^{mu}},
 \qquad \mathscr P_a(\mathbf x)=\mathscr P_{\rm sm}(\mathbf x) \prod_{j=1}^{r_a-1}\prod_{m\ge0} \frac{1}{1-\LL^m\mathbf x^{mu+\beta_{a,j}}},
\]
and
\[ \mathscr P_p(\mathbf x)= \sum_{\mathbf v}[P_{\mathbf v}(G_p)] \mathbf x^{\iota_p(\mathbf v)}.
\]
The geometric power structure gives
\begin{equation}
\label{eq:motivic-alpha}
\sum_\alpha[\Quot^\alpha\mcX]\mathbf x^\alpha =\mathscr P_{\rm sm}(\mathbf x)^{[U]} \prod_a\mathscr P_a(\mathbf x)^{[D_a^\circ]} \prod_p\mathscr P_p(\mathbf x).
\end{equation}
This is understood after the local representation classes are mapped to $\Lambda_{\mcX}$, so induction and incidence relations are imposed before a coefficient is extracted.
It follows by splitting disjoint supports and trivializing the bounded punctual jet families on a finite stratification,
as in \cite{GuseinZadeLuengoMelle2010}.

Applying $E_c$ to \eqref{eq:motivic-alpha} is always legitimate.
If the coefficient Quot spaces are smooth projective, evaluating $E_c$ at $u=v=-q$ recovers their ordinary Betti polynomials.
In general, \eqref{eq:IC-Betti} is the ordinary-cohomology statement.
Its factors use closed support loci, not ordinary Betti polynomials of the open strata.
In particular the moving surface factor below involves the coarse surface $X$, not just $U$.

\section{Support decomposition and compact intersection forms}
\label{sec:support}
\label{qfig:sec-intersection}

\subsection{The general support statement}
\label{subsec:general}

Let $\Lambda_{\mcX}$ be the numerical class group from Section \ref{subsec:motivic-assembly}, and put $u([\mcE])=\operatorname{rk}\mcE$.
Write $\mathbf x^\alpha$ for the monomial recording a class $\alpha\in\Lambda_{\mcX}$.
All sums are taken in a completion in which a bounded positive length has finitely many terms.
Fix a smooth component $H\subset\Quot^\alpha\mcX$, put $d=\dim H$,
and suppose that its HC morphism $\pi=h_\alpha|_{H}:H\ra C_\alpha$ is projective.
The cycle space may retain the several character cycles in a root chart; forgetting them is not needed here.

\begin{theorem}[Support decomposition]
\label{thm:general}
For a support stratification adapted to $\pi$, there is a noncanonical decomposition
\begin{equation*}
 R\pi_*\QQ_H[d] \simeq\bigoplus_{Z,k}\operatorname{IC}_Z(\mcL_{Z,k})[-k],
\end{equation*}
where $Z$ runs over a collision stratification refined by stabilizer and local representation data and adapted to \(\pi\), 
and $\mcL_{Z,k}$ are semisimple coefficient local systems, including possibly zero multiplicities.
With perverse normalization of the intersection complex,
\begin{equation}
\label{eq:IC-Betti}
P(H;q)=\sum_{Z,k}q^{d+k-\dim Z} P\bigl(IH^*(Z,\mcL_{Z,k});q\bigr).
\end{equation}
The transverse calculations at distinct support points are products of punctual calculations.
Their monodromy is retained in $\mcL_{Z,k}$.
\end{theorem}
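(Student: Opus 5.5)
The plan is to apply the BBDG decomposition theorem to the projective morphism $\pi:H\ra C_\alpha$ and then read off Betti numbers. Since $H$ is smooth of pure dimension $d$ (Theorem \ref{thm:global-hilbert-smooth}), $\QQ_H[d]$ is a simple perverse sheaf of geometric origin, pure of weight $d$. Because $\pi$ is projective, the decomposition theorem gives a noncanonical splitting $R\pi_*\QQ_H[d]\simeq\bigoplus_k{}^p\mathcal H^k(R\pi_*\QQ_H[d])[-k]$. Each perverse cohomology sheaf is semisimple, hence a direct sum of $\operatorname{IC}_Z(\mcL_{Z,k})$ with $Z$ running over the strata of any Whitney stratification for which $R\pi_*\QQ_H$ is constructible, and with $\mcL_{Z,k}$ semisimple local systems on the open part of $Z$ (zero multiplicities allowed).

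To match the stratification named in the statement, I will show that the collision stratification of $C_\alpha\subset\operatorname{Sym}^nX$ is adapted to $\pi$, after refining it by the stabilizer strata of Proposition \ref{prop:finite-stacky-locus} and by the local representation data of Section \ref{subsec:alpha-type}. Concretely, fix a stratum $Z$ in which the partition of support multiplicities, the stratum of $X$ containing each support point, and the local character $V_p$ are all constant. By the support splitting in Corollary \ref{prop:locality}, the fibre of $\pi$ over a point of $Z$ is the product $\prod_p\Quot^{V_p}_{p}$ of punctual fibres. Along a connected stratum the formal charts $[\operatorname{Spec}\CC[[x,y]]/G_p]$ are locally constant, because the stabilizer and the local representation type are fixed. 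So $\pi^{-1}(Z)\ra Z$ is, étale locally after choosing formal coordinates as in \eqref{eq:motivic-alpha}, a fibre bundle whose fibre is a product of punctual schemes, twisted only by permuting equal support data and by the stabilizer local system. This gives constructibility along $Z$ and shows that the transverse (normal-slice) calculations are products of punctual ones. The monodromy coming from permuting points with identical local data and from the stabilizer local system is exactly what gets recorded in $\mcL_{Z,k}$.

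For \eqref{eq:IC-Betti}, take hypercohomology of both sides: $H^{j}(H)=\mathbb H^{j-d}(C_\alpha,R\pi_*\QQ_H[d])$. The summand $\operatorname{IC}_Z(\mcL)[-k]$, with perverse normalization $\operatorname{IC}_Z(\mcL)|_{Z^\circ}=\mcL[\dim Z]$, contributes $IH^{j-d-k+\dim Z}(Z,\mcL)$. Summing over $j$ with weight $q^j$ gives the shift $q^{d+k-\dim Z}$.

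The main obstacle is the adaptedness step. One has to check that the punctual fibre type is genuinely locally trivial along a refined stratum, even though $P$ contains stabilizer-jump points. I would handle this by treating each point of $P$ as its own stratum, which is allowed since $P$ is finite. On the curve strata $D_a^\circ$ I would use the constant weights $(1,0)$ to trivialize the root charts, and then apply the equivariant-parameter argument from Theorem \ref{thm:global-hilbert-smooth} in families.
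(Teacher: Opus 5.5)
Your proposal is correct and follows the paper's proof. Both apply BBDG to the projective map from the smooth component. Both read off Betti numbers by hypercohomology with the convention $IH^a(Z,\mcL)=\mathbb H^{a-\dim Z}(Z,\operatorname{IC}_Z(\mcL))$, and your exponent computation is right. Both get the transverse statement from splitting of disjoint supports.

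There are two small differences. The paper takes adaptedness of the stratification as a hypothesis rather than sketching it as you do. For the last assertion, the paper uses a product decomposition of the Quot functor over a whole neighborhood of the cycle, so that $R\pi_*$ is locally an exterior product. That neighborhood statement is what actually justifies the transverse claim; your fibrewise product along $Z$ alone only gives the stalks.
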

\begin{proof}
Apply the BBDG's decomposition theorem \cite{BBD} to the smooth source and projective map.
Our convention is $IH^a(Z,\mcL)=\mathbb H^{a-\dim Z}(Z,\operatorname{IC}_Z(\mcL))$;
taking hypercohomology gives the exponent in \eqref{eq:IC-Betti}.
On a neighborhood of a cycle with disjoint support, the Quot functor is the product of its support-local functors.
The corresponding proper direct images are exterior products, which proves the last assertion.
\end{proof}

\subsection{Along a pure stacky curve}
\label{subsec:curve}

Let $D\subset S$ be a smooth divisor in a smooth quasiprojective surface, and let $H=H^{(r)}_{k,\boldsymbol\eta}(S,D)$ be the root-ideal flag space of Section \ref{sec:punctual-reduction}.
Here $\eta_j=\ell(J_j/J_{j-1})\ge0$ and $k=\ell(\mcO_S/J_{r-1})$.
The nonnegative gaps are distinct from the signed character defects:
\[ v_i=k+\sum_{j>i}\eta_j,
 \qquad v_i-v_0=-\sum_{j\le i}\eta_j.
\]
Thus the invariant length is $v_0=k+|\boldsymbol\eta|$, whereas $k$ is the colength of the largest ideal in the flag.
Its support morphism is
\begin{equation}
\label{eq:root-map}
\pi_{k,\boldsymbol\eta}:H\longrightarrow \Sym^kS\times\prod_{j=1}^{r-1}\Sym^{\eta_j}D,
 \quad (J_\bullet)\longmapsto ([\mcO_S/J_{r-1}],([J_j/J_{j-1}])_j).
\end{equation}
It is projective, and $H$ is smooth of dimension $d=2k+|\boldsymbol\eta|$.

\begin{prop}[Curve supports]
\label{prop:curve-supports}
The map \eqref{eq:root-map} is semismall.
Its relevant support types are sums of the modes
\[ (m,\mathbf0),\quad m\ge1,
 \qquad (m,\mathbf e_j),\quad m\ge0,
\]
moving respectively on $S$ and $D$.
Every relevant coefficient local system has rank one and is constant.
\end{prop}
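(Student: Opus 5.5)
The plan is to stratify the target $\Sym^kS\times\prod_j\Sym^{\eta_j}D$ by collision type, use formal locality (Corollary \ref{prop:locality}) to write each fibre as a product of punctual fibres, and read both the dimension bounds and the top-dimensional components off the parabolic affine paving behind Proposition \ref{prop:curve-punctual-series}.

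\textbf{Strata and fibres.} A point of the target is a cycle $\sum_x k_x[x]$ on $S$ together with cycles $\sum_y \eta_{j,y}[y]$ on $D$ for each $j$. Two cases occur at a support point.
\begin{itemize}
\item At a point $x\notin D$ the flag is forced to be constant, so the local factor is the Brian\c{c}on fibre of colength $k_x$, of dimension $k_x-1$.
\item At a point $y\in D$ the local factor is the parabolic punctual fibre $P^{(r)}_{k_y,\boldsymbol\eta_y}$.
\end{itemize}
A stratum $Z$ records how many points of $S\setminus D$ and of $D$ carry each local datum. Its dimension is $2\cdot\#\{\text{points off }D\}+\#\{\text{points on }D\}$, and its fibre is the product of the local fibres. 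Semismallness, $2\dim\pi^{-1}(z)+\dim Z\le d$, therefore reduces to two pointwise inequalities:
\begin{itemize}
\item off $D$: $2(k_x-1)+2\le 2k_x$, which is an equality;
\item on $D$: $2\dim P^{(r)}_{k_y,\boldsymbol\eta_y}+1\le 2k_y+|\boldsymbol\eta_y|$.
\end{itemize}

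\textbf{Dimension of the punctual fibre.} I would compute $\dim P^{(r)}_{k,\boldsymbol\eta}$ from the paving. Cells are products of modes: $(m,\mathbf0)$ of dimension $m-1$ and $(m,\mathbf e_j)$ of dimension $m$. A cell made of $a$ ordinary modes and $b=|\boldsymbol\eta|$ defect modes has dimension $k-a$. Since $a\ge1$ whenever $b=0$ and $k>0$, we get $\dim P^{(r)}_{k,\boldsymbol\eta}=k$ if $|\boldsymbol\eta|>0$, and $k-1$ otherwise; this matches the remark after Proposition \ref{prop:curve-punctual-series}. Then:
\begin{itemize}
\item if $|\boldsymbol\eta_y|\ge1$, the inequality reads $2k+1\le 2k+|\boldsymbol\eta|$, which holds;
\item if $|\boldsymbol\eta_y|=0$, it reads $2k-1\le 2k$, which holds strictly, so such points are never relevant on $D$.
\end{itemize}
Equality at a point of $D$ holds exactly when $|\boldsymbol\eta_y|=1$. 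Hence a stratum is relevant iff every point off $D$ is arbitrary and every point on $D$ carries a single gap $\mathbf e_j$ together with some $k_y=m\ge0$. These are the modes $(m,\mathbf0)$ moving on $S$ and $(m,\mathbf e_j)$ moving on $D$. A cycle point of $S$ that happens to lie on $D$ with $\boldsymbol\eta_y=0$ lies in a smaller, irrelevant stratum.

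\textbf{Local systems.} For a semismall map, the coefficient local system on a relevant stratum is spanned by the top-dimensional components of the fibre, with monodromy acting by permutation. The top components of the local factors are:
\begin{itemize}
\item off $D$: the Brian\c{c}on fibre, which is irreducible, so one component;
\item on $D$ with gap $\mathbf e_j$: the count $[t^m]\prod_{a=1}^{1}(1-t^a)^{-1}=1$, so again one component.
\end{itemize}
So the top-dimensional part of each relevant fibre is irreducible and the local system has rank one. Monodromy around the stratum permutes points carrying equal modes, which is also how the stratum is a symmetric product, and it fixes the unique top component. A rank-one local system with trivial monodromy on the top class is constant.

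The main obstacle I expect is making the dimension count rigorous: one must know the paving cells really have the stated dimensions and that the top cells are dense in components. I would take both from Mochizuki's paving and the dimension remark already made, rather than reprove them. A second point to check is that the unique top component does not secretly acquire a sign under monodromy; since the fundamental class of an irreducible component is monodromy invariant, this should be automatic.
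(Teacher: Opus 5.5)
Your proposal is correct and follows the paper's own proof: the same pointwise codimension count from the parabolic paving (ordinary blocks of punctual dimension $m-1$, defect blocks of dimension $m$), with equality exactly when $|\boldsymbol\eta_y|=1$ and strict inequality for ordinary blocks restricted to $D$. The local-system part also matches the paper, using a unique top component from Brian\c{c}on's theorem and the count $[t^m](1-t)^{-1}=1$, and trivial monodromy because complex fundamental classes are preserved.
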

\begin{proof}
At one point a block of type $(m,\mathbf0)$ has punctual dimension $m-1$, whereas a block $(m,\boldsymbol\epsilon)$ with $|\boldsymbol\epsilon|>0$ has punctual dimension $m$.
These dimensions, and purity of the dimensions of the fibres,
are given by the parabolic punctual paving.
The ordinary block moves in dimension two and contributes codimension $2m-2$.
A defect block moves on $D$, so its contribution to codimension is $2m+|\boldsymbol\epsilon|-1\ge2m$.
Equality holds precisely when $|\boldsymbol\epsilon|=1$.
Add these inequalities over distinct support points.
Restricting an ordinary block to $D$ adds one to codimension without increasing the fibre dimension and creates no additional relevant support.

For a relevant ordinary block, Brian\c{c}on's theorem gives the unique top component.
For a relevant defect block the top-cell coefficient is $[t^m]\frac{1}{1-t}=1$, and the parabolic paving has dense top-dimensional cells.
Thus again there is a unique top component.
Complex coordinate changes preserve its fundamental class, which proves triviality of the local systems.
This is the semismall calculation of \cite[Appendix, Section~8]{deCataldoMiglioriniSemismall2004}.
\end{proof}

More explicitly, choose finite collections of nonnegative integers $a_m,b_{j,m}$ such that
\[ k=\sum_{m\ge1}ma_m+\sum_{j,m\ge0}mb_{j,m},
 \qquad \eta_j=\sum_{m\ge0}b_{j,m}.
\]
The normalization of the corresponding closed support is
\[ Z_{a,b}=\prod_{m\ge1}\Sym^{a_m}S \times\prod_{j,m\ge0}\Sym^{b_{j,m}}D.
\]
Let $\nu_{a,b}$ be its finite map to the target of \eqref{eq:root-map}.
The semismall decomposition reads
\begin{equation}
\label{eq:root-IC}
R(\pi_{k,\boldsymbol\eta})_*\QQ_H[d] \simeq\bigoplus_{a,b} (\nu_{a,b})_*\QQ_{Z_{a,b}}[\dim Z_{a,b}].
\end{equation}
Indeed symmetric powers of a smooth surface are rational homology manifolds, and symmetric powers of a smooth curve are smooth.
Consequently
\begin{equation*}
 P(H;q)=\sum_{a,b}q^{2c(a,b)}P(Z_{a,b};q),
 \qquad c(a,b)=\sum_{m\ge1}(m-1)a_m+\sum_{j,m\ge0}mb_{j,m}.
\end{equation*}
Taking graded symmetric powers gives the product of Section \ref{sec:punctual-reduction}:
\begin{equation}
\label{eq:root-Betti}
\sum_{k,\boldsymbol\eta}P(H^{(r)}_{k,\boldsymbol\eta};q) t^k\mathbf z^{\boldsymbol\eta} =F_S(q,t) \prod_{j=1}^{r-1}\prod_{m\ge0} \frac{(1+q^{2m+1}t^mz_j)^{b_1(D)}} {(1-q^{2m}t^mz_j)^{b_0(D)} (1-q^{2m+2}t^mz_j)^{b_2(D)}}.
\end{equation}
The support proof establishes this ordinary Betti identity also for quasiprojective $S$ and $D$; it does not require projectivity or purity of their cohomology.

\begin{rmk}
For $|\boldsymbol\eta|>1$, a fibre over one point of $D$ can have several top components, but its stratum is not relevant.
Those classes are accounted for by stalks of the larger relevant supports in \eqref{eq:root-IC}.
The curve coefficient is a transverse intersection form after slicing the moving curve direction; it is not the compact-to-ordinary image of the whole affine chart.
For example $H^{(r)}_{0,\mathbf e_j}(\CC^2,\CC)=\CC$ has a one-point punctual fibre and a rank-one curve support,
but $\operatorname{im}(H_c^*(\CC)\ra H^*(\CC))=0$.
At an isolated small-group point the relevant support really can be a point, and it is precisely there that the compact images $I_{\mathbf v}(G)$ and the series $K_G$ enter.
\end{rmk}

\subsection{Application to isolated stacky points}
Locally at isolated stacky points, we can put $X=\CC^2/G$ for small finite group $G$, and let $[0]\in X$ be the image of the origin.
For a charge $\mathbf v$, set
\[ a(\mathbf v)=\min_i\left\lfloor\frac{v_i}{d_i}\right\rfloor.
\]

\begin{prop}
\label{prop:support-map}
There is a projective morphism
\[ \pi_{\mathbf v}:\Quot^{\mathbf v}[\CC^2/G]\longrightarrow \Sym^{a(\mathbf v)}X.
\]
If a quotient has moving part supported on free orbits with total balanced multiplicity $m$, this map records their length-$m$ cycle on $X\setminus\{[0]\}$ and adds $(a(\mathbf v)-m)[0]$.
In particular,
\begin{equation*}
\pi_n:\Hilb^n[\CC^2/G]\longrightarrow\Sym^nX,\qquad P_n=\pi_n^{-1}(n[0]).
\end{equation*}
Write $P_{\mathbf v}$ for the origin fibre in general.
\end{prop}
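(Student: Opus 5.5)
The construction goes through the ordinary Hilbert--Chow morphism of $\CC^2$ followed by a factorization through the quotient. Put $N=|\mathbf v|$. By Proposition \ref{prop:representability}, $\Quot^{\mathbf v}[\CC^2/G]$ is an open-and-closed piece of $\Hilb^N(\CC^2)^G$. The Hilbert--Chow morphism $\Hilb^N\CC^2\ra\Sym^N\CC^2$ is projective and $G$-equivariant, so it restricts to a projective morphism from $\Quot^{\mathbf v}[\CC^2/G]$ to $(\Sym^N\CC^2)^G$. Composing with the finite map to $\Sym^N X$, induced by $\CC^2\ra X$, gives a projective morphism whose image is the cycle $\sum_p \ell_p[\bar p]$. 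The remaining work is to divide this cycle by the orbit sizes and to shrink the target to $\Sym^{a(\mathbf v)}X$.

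\textbf{Step 1: the cycle on $X$ and its multiplicities.} We use the tameness argument of Section \ref{subsec:alpha-type}. The sheaf $\pi_*\mcQ$ is the invariant part $Q^G$, which is a flat family of $\mcO_X$-quotients, of length $v_0$ on the fibres. For the moving part this is easy. A free orbit carrying a balanced block of type $m_p\CC[G]$ contributes invariant length $m_p$ at $\bar p\neq[0]$. By Proposition \ref{prop:finite-stacky-locus}, smallness means $X\setminus\{[0]\}$ is the free locus, so the Hilbert--Chow morphism of the coarse quotient $Q^G$ on $X\setminus\{[0]\}$ records $\sum m_p[\bar p]$. Since $G$ acts freely there, the character of the moving part is $m\delta$ with $m=\sum m_p$, and what is left, $\mathbf v-m\delta$, is the character of the part supported at the origin. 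That part is nonnegative coordinatewise, so $m\le a(\mathbf v)$. The morphism is then defined by
\[
Q\longmapsto \operatorname{HC}(Q^G|_{X\setminus[0]})+\bigl(a(\mathbf v)-m\bigr)[0].
\]

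\textbf{Step 2: algebraicity in families.} This is the main obstacle, because $m$ jumps in families while $a(\mathbf v)$ is constant. I would first use the family version of $\operatorname{HC}(Q^G)$: the Hilbert--Chow map of $Q^G$, of degree $v_0$, into $\Sym^{v_0}X$. Its coefficient at $[0]$ is $v_0-m\ge a(\mathbf v)-m$. The plan is to realise $\pi_{\mathbf v}$ as the composite of this map with the subtraction of $(v_0-a(\mathbf v))[0]$. This subtraction is a morphism on the closed locus of $\Sym^{v_0}X$ of cycles containing $(v_0-a(\mathbf v))[0]$, namely the image of the closed embedding $\Sym^{a(\mathbf v)}X\ra\Sym^{v_0}X$, $c\mapsto c+(v_0-a(\mathbf v))[0]$.

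So it suffices to show that $\operatorname{HC}(Q^G)$ lands in that image, and then to prove that the inverse is a morphism on the smooth reduced source. The first point follows from Step 1. For the second, I would argue via seminormality, or apply the same construction with $X$ replaced by $(\Sym\CC^2)^G$-equivariant symmetric products, where cycles are divided by $|G|$ using the regular-representation multiplicity.

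\textbf{Step 3: projectivity and the special case.} Projectivity follows because the morphism is a projective Hilbert--Chow map composed with finite maps and the inverse of a closed embedding, and properness descends along these. In the balanced case $\mathbf v=n\delta$ we have $a=n$, and the morphism is the support morphism $h_\alpha$ of Section \ref{subsec:alpha-type}, with $P_n$ its fibre over $n[0]$.
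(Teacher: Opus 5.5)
Your pointwise analysis is correct: free orbits carry $m_p\CC[G]$, the origin part has charge $\mathbf v-m\delta\ge 0$, so $m\le a(\mathbf v)\le v_0$, and the invariant cycle is $\sum_p m_p[\bar p]+(v_0-m)[0]$. However, the scheme-theoretic content of the statement is not established.

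Step 2 rests entirely on the claim that $c\mapsto c+(v_0-a(\mathbf v))[0]$ is a closed embedding $\Sym^{a(\mathbf v)}X\to\Sym^{v_0}X$, and you assert this without proof. You then treat the factorization through it as a further problem and offer two routes (seminormality, or an unspecified division by $|G|$) without carrying out either. The missing idea is that in characteristic zero the coordinate ring of $\Sym^kX$ is generated by power sums $\sum_i f(x_i)$, $f\in\CC[X]$. For $f([0])=0$ the power sum on $\Sym^{v_0}X$ pulls back to the power sum on $\Sym^{a(\mathbf v)}X$, so the comorphism is surjective. Once this is known, no seminormality is needed. $\Quot^{\mathbf v}[\CC^2/G]$ is smooth, hence reduced, so a morphism from it landing set-theoretically in a closed subscheme factors through that subscheme.

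Second, Step 3 does not apply to the morphism you actually built. That morphism is $J\mapsto J^G$, viewed as a point of $\Hilb^{v_0}X$, followed by the Hilbert--Chow map of the singular surface $X$. Nothing you wrote shows that $J\mapsto J^G$ (or the composite) is proper. The composite is not the ambient Hilbert--Chow map followed by finite maps, and you never relate it to the projective map $\Quot^{\mathbf v}[\CC^2/G]\to\Sym^NX$ of your first paragraph.

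One repair is as follows. Check that $\pi_{\mathbf v}$ followed by the morphism $\Sym^{a(\mathbf v)}X\to\Sym^NX$, $c\mapsto|G|c+(N-a(\mathbf v)|G|)[0]$, agrees with that ambient map on closed points. Since the source is reduced and the target separated, the two maps are then equal. Hence $\pi_{\mathbf v}$ is proper, and projective because its source is quasi-projective.

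The paper avoids $\Hilb X$ altogether. It proves by the same power-sum argument that the orbit-cycle map $\iota_{\mathbf v}\colon\Sym^{a(\mathbf v)}X\to\Sym^N\CC^2$ is a closed embedding: the power sum of an invariant $f$ with $f(0)=0$ pulls back to $|G|$ times that of its descent to $X$. It then factors the restricted ambient Hilbert--Chow map through $\iota_{\mathbf v}$ by reducedness, and gets projectivity as a base change along a closed embedding.
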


\begin{proof}
Restrict the ordinary Hilbert-Chow morphism in colength $N=|\mathbf v|$.
By smallness every nonzero orbit is free.
A $G$-invariant quotient supported on such an orbit is induced from its quotient at one orbit point,
and hence has representation $k\CC[G]$ for some $k$.
Its moving part therefore has charge $m\delta$, where $0\le m\le a(\mathbf v)$.

The orbit-cycle map
\[ \iota_{\mathbf v}:\Sym^{a(\mathbf v)}X \longrightarrow\Sym^N\CC^2 \]
replaces each point by its orbit cycle, counted with multiplicity,
and adds $(N-a(\mathbf v)|G|)0$.
An orbit cycle over $o$ is $|G|\cdot0$.
This map is a closed embedding.
To see this on coordinate rings, take an invariant polynomial $f$ with $f(0)=0$.
The ambient power sum of $f$ pulls back to $|G|$ times the power sum of the descended function on $X$.
Such power sums generate the coordinate ring of a symmetric product in characteristic zero,
so the pullback is surjective.

The restricted Hilbert-Chow map has image in this closed subscheme on geometric points.
Since $\Quot^{\mathbf v}[\CC^2/G]$ is reduced, it factors scheme-theoretically through $\iota_{\mathbf v}$.
Its factor is projective, being the base change of the restricted projective map along a closed embedding.
\end{proof}

\begin{lem}
\label{lem:core}
Scalar dilation makes $\Quot^{\mathbf v}[\CC^2/G]$ semi-projective.
Its proper core is $P_{\mathbf v}$, and the inclusion induces isomorphisms on ordinary rational homology and cohomology.
The group $H^d(P_{\mathbf v})$, equivalently $H^d(\Quot^{\mathbf v}[\CC^2/G])$, is pure of weight $d$.
\end{lem}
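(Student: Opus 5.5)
The plan is to run the standard semi-projective argument for torus actions on quasi-projective varieties with a proper fixed locus and a contracting limit map. The $\mathbb{G}_m$ in question is scalar dilation $t\cdot(x,y)=(tx,ty)$. It commutes with $G$, since $G$ acts linearly, so it acts on $\Hilb^{|\mathbf v|}(\CC^2)^G$. It preserves each isotypic multiplicity, hence acts on the open-and-closed locus $\Quot^{\mathbf v}[\CC^2/G]$ of Proposition \ref{prop:representability}. The morphism $\pi_{\mathbf v}$ of Proposition \ref{prop:support-map} is equivariant for the induced weight-positive action on $\Sym^{a(\mathbf v)}X$. That action contracts $X=\CC^2/G$ to $[0]$, because the invariant ring is positively graded with degree-zero part $\CC$.

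The first step is existence of limits. For any point $J$, the orbit map $\mathbb{G}_m\ra\Sym^{a(\mathbf v)}X$ extends over $0$ with limit $a(\mathbf v)[0]$. Properness of $\pi_{\mathbf v}$ then lets the valuative criterion extend the orbit of $J$ itself, with $\lim_{t\ra0}t\cdot J\in P_{\mathbf v}$. The fixed locus lies in $\pi_{\mathbf v}^{-1}$ of the unique fixed point $a(\mathbf v)[0]$, so it sits inside $P_{\mathbf v}$. It is proper, because $P_{\mathbf v}$ is a fibre of a projective morphism. Together with quasi-projectivity and smoothness (Lemma \ref{lem:smooth-dimension}), this is exactly semi-projectivity.

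The core is identified as follows. The core of a semi-projective variety is the set of points whose limit as $t\ra\infty$ also exists. Suppose $J\in P_{\mathbf v}$. Then $t\cdot J$ stays in the projective fibre $P_{\mathbf v}$, so both limits exist. Conversely, suppose $\lim_{t\ra\infty}t\cdot J$ exists. Then $\pi_{\mathbf v}(t\cdot J)$ has a limit as $t\ra\infty$. But any cycle other than $a(\mathbf v)[0]$ escapes to infinity under the positive-weight action on the affine $\Sym X$. So $\pi_{\mathbf v}(J)=a(\mathbf v)[0]$, i.e. $J\in P_{\mathbf v}$.

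For the topology I will use the BB decomposition with respect to $t\ra0$. It expresses $\Quot^{\mathbf v}$ as a union of attracting affine bundles over the fixed components. Composing with the limit map gives a $\mathbb{G}_m$-equivariant deformation retraction onto the core, the standard Hausel–Rodriguez-Villegas argument. Concretely, the action map $\CC\times P_{\mathbf v}$-neighbourhood contraction makes the inclusion $P_{\mathbf v}\hookrightarrow\Quot^{\mathbf v}$ a homotopy equivalence, which gives the isomorphisms on homology and cohomology.

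For purity, $P_{\mathbf v}$ is proper but possibly singular, so its cohomology has weights $\le d$ in degree $d$. The smooth ambient space $M=\Quot^{\mathbf v}$ has weights $\ge d$ in $H^d(M)$. The restriction $H^d(M)\ra H^d(P_{\mathbf v})$ is a morphism of mixed Hodge structures and an isomorphism, hence strictly compatible with weights. Both bounds then hold simultaneously, so the weight is exactly $d$. Alternatively, the BB decomposition for smooth semi-projective $M$ writes $H^*(M)$ as a sum of Tate-twisted cohomologies of the smooth projective fixed components, which is pure directly.

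The main obstacle is the homotopy-equivalence step, more precisely making sure the retraction is continuous and not merely setwise. I would cite the semi-projective retraction result rather than reprove it, checking only its hypotheses: a smooth quasi-projective variety with a $\mathbb{G}_m$-action, existence of all limits, and a proper fixed locus. All of these were verified above.
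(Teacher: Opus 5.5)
Your proposal is correct and follows essentially the same route as the paper: limits as $t\to0$ via properness of $\pi_{\mathbf v}$ over the contracting base $\Sym^{a(\mathbf v)}X$, identification of the core via limits as $t\to\infty$, the Hausel--Rodriguez-Villegas core theorem for the (co)homology isomorphisms, and the two-sided weight bound (smooth source, proper core) for purity. One small point to tighten: the fixed locus is proper because it is closed in $M$ and contained in $P_{\mathbf v}$, hence closed in the projective fibre; mere containment in $P_{\mathbf v}$ is not enough.
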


\begin{proof}
The affine base $\Sym^{a(\mathbf v)}X$ has a positive grading, with unique fixed point $a(\mathbf v)[0]$.
Projectivity of $\pi_{\mathbf v}$ extends every scalar orbit as its parameter tends to zero.
Fixed ideals are homogeneous and punctual, so the fixed locus is a closed subvariety of the projective fibre $P_{\mathbf v}$.

The core consists of points whose scalar orbit also has a limit as the parameter tends to infinity.
Such a limit can exist on the positively graded base only at its origin.
Conversely every orbit in the proper fibre $P_{\mathbf v}$ has that limit by properness.
Thus the core is exactly $P_{\mathbf v}$.
The core theorem for smooth semiprojective varieties gives the homology and cohomology isomorphisms \cite[Theorem 1.3.1 and Corollaries 1.3.2, 1.3.6]{HRV}.

The inclusion is algebraic, so its cohomology isomorphism preserves mixed Hodge structures.
Smoothness bounds the weights of $H^d(\Quot^{\mathbf v}[\CC^2/G])$ below by $d$, whereas properness bounds those of $H^d(P_{\mathbf v})$ above by $d$.
The common group is therefore pure.
\end{proof}

Using the notation of Section \ref{sec:preliminaries},
\begin{equation*}
B_G(q,t)=\sum_{n\ge0}P(\Hilb^n[\CC^2/G];q)t^n =\sum_{n\ge0}P(P_n;q)t^n.
\end{equation*}
Purity does not by itself assert that all these cohomology groups are even or of Tate type.

\subsection{The local collision space}

\begin{defn}
\label{def:collision}
The compact intersection image is
\[ I_{\mathbf v}^d =\operatorname{im}\bigl(H_c^d(\Quot^{\mathbf v}[\CC^2/G]) \longrightarrow H^d(\Quot^{\mathbf v}[\CC^2/G])\bigr).
\]
Set $I_n=I_{n\delta}$ and
\begin{equation*}
K_G(q,t)=\sum_{n,d\ge0}\dim I_n^d\,q^dt^n.
\end{equation*}
Empty moduli spaces contribute zero; $\Hilb^0[\CC^2/G]$ is a point and $I_{\mathbf0}=\QQ$.
\end{defn}

The full punctual fibre and this image play different roles.
Lemma \ref{lem:core} identifies $H^*(P_n)$ with all of $H^*(\Hilb^n[\CC^2/G])$.
The image $I_n$ will instead be the multiplicity attached to support concentrated at the origin.

\subsection{Support strata and their intersection complexes}

For $0\le m\le n$, let $S_{m,\lambda}\subset\Sym^aX$ consist of cycles
\begin{equation*}
(n-m)[0]+\lambda_1x_1+\cdots+\lambda_\ell x_\ell,
\quad x_i\ne o, x_i\ne x_j\quad(i\ne j).
\end{equation*}
The dimension vector at the origin of a quotient over this stratum is $\bfw=\mathbf v-m\delta$.

\begin{lem}
\label{lem:normalization}
The normalization of $\overline{S}_{m,\lambda}$ is
\[ Z_\lambda=\prod_{k\ge1}\Sym^{a_k(\lambda)}X.
\]
Its finite normalization map is $\nu_\lambda((z_k)_k)=(a-m)[0]+\sum_k k z_k$.
The variety $Z_\lambda$ is contractible and rationally smooth of complex dimension $2\ell$, and
\begin{equation}
\label{eq:ic-normalization}
\operatorname{IC}_{\overline S_{m,\lambda}} \cong R\nu_{\lambda*}\QQ_{Z_\lambda}[2\ell].
\end{equation}
The intersection complex is normalized to be perverse.
\end{lem}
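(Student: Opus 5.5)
The plan is to parametrize the closure $\overline S_{m,\lambda}$ directly. Here $\lambda$ is a partition of $m$ with $a_k=a_k(\lambda)$ parts equal to $k$ and $\ell=\sum_k a_k$ parts in all. A point of $Z_\lambda=\prod_k\Sym^{a_k}X$ is a tuple of effective cycles $z_k$ of degree $a_k$ on $X$. The map $\nu_\lambda$ multiplies each $z_k$ by $k$, adds the results, and adds $(a-m)[0]$. This is a morphism of varieties $Z_\lambda\ra\Sym^aX$, since addition and scalar multiplication of cycles are morphisms between symmetric products. It is finite: it is proper with finite fibres, because a cycle of degree $a$ has only finitely many decompositions of the required shape, and every finite-type map is quasi-finite on fibres of this kind. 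The image is closed and contains $S_{m,\lambda}$ as the dense open set where the points are distinct and nonzero, so the image is exactly $\overline S_{m,\lambda}$.

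The map $\nu_\lambda$ is bijective over $S_{m,\lambda}$. Given a cycle $(n-m)[0]+\sum\lambda_ix_i$ with the $x_i$ distinct and nonzero, the multiplicity of each $x_i$ recovers which $z_k$ it belongs to. So $\nu_\lambda$ is finite and birational onto $\overline S_{m,\lambda}$. Since $Z_\lambda$ is normal (a product of symmetric powers of the normal surface $X$, each a quotient of a normal variety by a finite group), $Z_\lambda$ is the normalization.

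Next come the topological properties. $Z_\lambda$ is the quotient of $X^{\ell}$ by the finite group $\prod_k\mathfrak S_{a_k}$, and $X=\CC^2/G$ is itself a finite quotient of $\CC^2$. Hence $Z_\lambda=\CC^{2\ell}/\Gamma$ for the finite group $\Gamma=G^{\ell}\rtimes\prod_k\mathfrak S_{a_k}$ acting linearly, so $Z_\lambda$ has complex dimension $2\ell$. It is contractible, since scalar dilation commutes with $\Gamma$ and contracts the quotient to the image of the origin. It is rationally smooth, because a quotient of a smooth variety by a finite group satisfies $\QQ_{Z_\lambda}[2\ell]\cong\operatorname{IC}_{Z_\lambda}$; this can be checked stalkwise through invariants of local cohomology.

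Finally, for \eqref{eq:ic-normalization}, $\nu_\lambda$ is finite, so $R\nu_{\lambda*}$ is t-exact and preserves intersection complexes: it sends simple perverse sheaves to semisimple ones satisfying the support and cosupport conditions strictly. It is generically an isomorphism onto its image, so $R\nu_{\lambda*}\operatorname{IC}_{Z_\lambda}$ is an intersection complex with constant rank-one coefficients on $S_{m,\lambda}$. That is $\operatorname{IC}_{\overline S_{m,\lambda}}$. Combined with rational smoothness this gives \eqref{eq:ic-normalization}.

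The main obstacle is the generic injectivity, together with the precise scheme-theoretic identification of the image closure in $\Sym^aX$. The paper's indexing mixes $a$, $n$ and $a(\mathbf v)$, and the nonzero points must avoid the origin so that the multiplicity at $[0]$ does not merge with the moving parts. I would handle this by restricting to $x_i\neq[0]$ as in the definition of $S_{m,\lambda}$. Note that $\overline S_{m,\lambda}$ is not normal, since $\nu_\lambda$ fails to be injective where points collide, so the identification must be as a normalization and not as an isomorphism.
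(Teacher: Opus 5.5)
Your argument is correct and is essentially the standard symmetric-product argument of de Cataldo--Migliorini that the paper cites in place of a proof: $\nu_\lambda$ is finite and birational from the normal, rationally smooth finite quotient $Z_\lambda=\CC^{2\ell}/\Gamma$, and finite pushforward preserves the strict support and cosupport conditions that characterize $\operatorname{IC}$.

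Two points need tightening. First, finite fibres alone do not give properness of $\nu_\lambda$. Instead, note that $X^\ell\to X^a\to\Sym^aX$ is a closed embedding (diagonals plus the fixed point $[0]$) followed by a finite quotient map, so $\CC[Z_\lambda]=\CC[X^\ell]^{\prod_k\mathfrak S_{a_k}}$ is a finite module over $\CC[\Sym^aX]$. Second, your closing claim that $\overline S_{m,\lambda}$ is not normal is too strong as a general statement: for instance, when $\lambda=(1^\ell)$ the map $\nu_\lambda$ is a closed embedding.
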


\begin{proof}
Standard treatment of symmetric product, a simple modification from \cite{De_Cataldo2002-xj}.
\end{proof}

For $\ell>0$, this lemma and rational Poincar\'e duality give
\[ H^i(Z_\lambda)=
\begin{cases}
\QQ&i=0,\\0&i\ne0,
\end{cases}
\qquad H_c^i(Z_\lambda)=
\begin{cases}
\QQ&i=4\ell,\\0&i\ne4\ell.
\end{cases}
\]
Consequently the natural compact-to-ordinary map for the complex in \eqref{eq:ic-normalization} is zero.
For the empty partition,
$Z_\varnothing$ is a point and that map is the identity.

We then compute their local multiplicites of each perverse summand.
\begin{lem}
\label{lem:local-multiplicities}
Let
\[ \mathcal C_{\mathbf v} =R\pi_{\mathbf v*}\QQ_{\Quot^{\mathbf v}[\CC^2/G]}[D(\mathbf v)].
\]
In the semisimple perverse sheaf ${}^pH^j(\mathcal C_{\bfw})$, let $V_{\bfw,j}$ denote the multiplicity of the skyscraper sheaf at the origin.
Then the summands of $\mathcal C_{\mathbf v}$ whose support is exactly $\overline S_{m,\lambda}$ are
\begin{equation}
\label{eq:exact-support}
\operatorname{IC}_{\overline S_{m,\lambda}} \otimes V_{\mathbf v-m\delta,j}[-j].
\end{equation}
All the coefficient local systems on $S_{m,\lambda}$ in this description are constant.
\end{lem}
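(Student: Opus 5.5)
The plan is to determine the summands of $\mathcal C_{\mathbf v}$ with support exactly $\overline S_{m,\lambda}$ by restricting to a small analytic neighbourhood of a general point of $S_{m,\lambda}$. There formal locality (Corollary \ref{prop:locality}) splits the Quot space into a product, and the decomposition theorem on each factor can be read off directly. By Theorem \ref{thm:general}, $\mathcal C_{\mathbf v}$ is already a direct sum of shifted $\operatorname{IC}$ complexes on closures of strata adapted to $\pi_{\mathbf v}$. Since $\pi_{\mathbf v}$ is $\CC^*$-equivariant and the strata $S_{m,\lambda}$ are the collision strata by multiplicity at the origin and away from it, I would take these strata as the adapted stratification. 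It then remains to identify, for each $(m,\lambda)$, the generic coefficient local system of the summands supported on $\overline S_{m,\lambda}$, in each perverse degree $j$.

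First, fix a point $\xi=(n-m)[0]+\sum\lambda_ix_i\in S_{m,\lambda}$ and choose disjoint small neighbourhoods $B_0\ni[0]$ and $B_i\ni x_i$ in $X$. Over the corresponding neighbourhood $\prod_i\Sym^{\lambda_i}B_i\times\Sym^{a(\mathbf w)}B_0$ of $\xi$, the idempotent splitting in Theorem \ref{thm:global-hilbert-smooth} identifies $\pi_{\mathbf v}$ with a product. One factor is $\pi_{\mathbf w}$ restricted over $B_0$, with $\mathbf w=\mathbf v-m\delta$. The other factors are, for each $i$, the ordinary Hilbert–Chow maps $\Hilb^{\lambda_i}(B_i)\to\Sym^{\lambda_i}B_i$; here smallness is used, since the orbit of $x_i$ is free and a balanced $G$-quotient on it is induced from an ordinary quotient at one point. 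By the Künneth formula for proper pushforward, the restriction of $\mathcal C_{\mathbf v}$ is then $\mathcal C_{\mathbf w}|_{B_0}\boxtimes\bigboxtimes_i\mathcal C^{\mathrm{HC}}_{\lambda_i}$. Perverse truncation commutes with external products of semisimple complexes on products, which gives the corresponding decomposition of ${}^pH^\bullet$.

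Second, a summand of this product is supported exactly at the point $\xi$ only if it comes from a skyscraper at $[0]$ in the first factor together with a summand of each $\mathcal C^{\mathrm{HC}}_{\lambda_i}$ supported on the small diagonal of $\Sym^{\lambda_i}B_i$. The first factor contributes $V_{\mathbf w,j}$ in perverse degree $j$. By the classical semismall analysis (Brian\c{c}on irreducibility of the punctual Hilbert scheme, as used in Proposition \ref{prop:curve-supports}), the diagonal summand of $\mathcal C^{\mathrm{HC}}_{\lambda_i}$ is $\QQ$ in perverse degree $0$. The stalk multiplicity of summands with support exactly $\overline S_{m,\lambda}$ at $\xi$, after the transverse slice, is therefore $V_{\mathbf v-m\delta,j}$ in degree $j$. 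This gives \eqref{eq:exact-support}, provided the local systems are constant.

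The main obstacle is the constancy of the local systems on $S_{m,\lambda}$. Monodromy could come from two sources: permuting the points $x_i$ of equal multiplicity, or moving them around $[0]$ in $X\setminus\{[0]\}$, whose fundamental group is $G$, and $G$ might act on the stalk. For the first source, I would pass to the normalization $Z_\lambda$ of Lemma \ref{lem:normalization}. The permutations are absorbed by the symmetric products, and each diagonal factor $\QQ$ is spanned by the fundamental class of the unique top component of the punctual Hilbert scheme, so permutations act trivially on it. For the second source, the origin factor $V_{\mathbf w,j}$ is a fixed vector space attached to $[0]$, independent of the moving points, so transport acts on it trivially. The remaining point is that the identification of the free-orbit factor with $\Hilb^{\lambda_i}(B_i)$ depends on choosing a lift of $x_i$ in its orbit. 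Different lifts are related by the action of an element $g\in G$, which is a linear, hence orientation-preserving, automorphism; it carries the top punctual component to itself and preserves its fundamental class. Combining these gives trivial monodromy, and hence constant local systems, which would complete the proof.
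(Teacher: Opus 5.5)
Your proposal is correct and follows the paper's proof essentially step for step. You split $\pi_{\mathbf v}$ over disjoint analytic neighbourhoods into the origin problem for $\mathbf v-m\delta$ times ordinary Hilbert--Chow maps on the free orbits, force an exact-support summand to use a point-supported summand at the origin together with the unique Brian\c{c}on top class in each moving factor, and get constant local systems because those fundamental classes are preserved by orientation-preserving coordinate changes and by permutations of equal blocks, while the origin factor stays fixed; your remark on independence of the choice of lift of $x_i$ in its orbit is just a more explicit version of the paper's appeal to complex orientation, and the absence of an extra perverse shift is automatic from your product identification (equivalently $D(\mathbf v)=D(\mathbf v-m\delta)+2m$).
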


\begin{proof}
Choose pairwise disjoint analytic neighborhoods of $o,x_1,\ldots,x_\ell$.
A finite subscheme supported in their union decomposes uniquely into its restrictions to the neighborhoods.
Above a nonzero point the action is free, so an equivariant quotient on the orbit is equivalent to an ordinary quotient on the corresponding neighborhood of the smooth surface $X\setminus\{o\}$.
The Hilbert-Chow morphism is therefore locally the product of the origin problem for $\bfw$ and the ordinary Hilbert-Chow maps in lengths $\lambda_1,\ldots,\lambda_\ell$.

For a smooth surface, Hilbert-Chow is semismall.
Its decomposition has one rank-one summand for each partition stratum: the relevant punctual Hilbert fibre has a unique top-dimensional fundamental class \cite{De_Cataldo2002-xj}.
Transversely to $S_{m,\lambda}$, the summand with that exact support uses this top class in every moving factor.
In the origin factor it must use a point-supported summand.
A positive-dimensional support in that factor instead gives a strictly larger support closure.
The product decomposition proves \eqref{eq:exact-support}.
There is no additional perverse shift: the smooth source dimensions satisfy $D(\mathbf v)=D(\bfw)+2m$.

This product description also makes the direct image constructible for the stated support stratification; its simple supports are closures of these strata.
The moving top classes have trivial monodromy under changes of analytic coordinates, since such changes preserve complex orientation.
Permuting equal-sized moving factors acts trivially as well: their fundamental classes have even real degree.
The origin factor is fixed independently of the moving configuration.
Thus no nonconstant local system is introduced.
\end{proof}

\subsection{The local factorization}

We know have the following factorization via decomposition of support.
\begin{theorem}
\label{thm:support}
For every finite small subgroup $G\subset GL_2(\CC)$,
\begin{equation}
\label{eq:factorization}
B_G(q,t)=F(q,t)K_G(q,t),\qquad F(q,t)=\prod_{k\ge1}\frac{1}{1-q^{2k-2}t^k}.
\end{equation}
More generally, with $\mathbf z^{\mathbf v}=\prod_i z_i^{v_i}$,
\begin{equation}
\label{eq:all-charge}
\sum_{\mathbf v}P(\Quot^{\mathbf v}[\CC^2/G];q)\mathbf z^{\mathbf v} = \prod_{k\ge1}\frac{1}{1-q^{2k-2}\mathbf z^{k\delta}} \sum_{\mathbf v}P(I_{\mathbf v};q)\mathbf z^{\mathbf v}.
\end{equation}
Moreover, the multiplicity of $IC_{\bar{S}_{0,\lambda}}$ in degree $j$ is isomorphic to $I_{\mathbf v}^{D(\mathbf v)+j}$.
\end{theorem}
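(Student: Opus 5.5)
We would derive the factorization from the decomposition theorem applied to the projective support map $\pi_{\mathbf v}$ of Proposition \ref{prop:support-map}, using the exact-support description of Lemma \ref{lem:local-multiplicities}. First, by Theorem \ref{thm:general} and Lemma \ref{lem:local-multiplicities}, we write
\[ \mathcal C_{\mathbf v}\simeq\bigoplus_{m,\lambda,j}\operatorname{IC}_{\overline S_{m,\lambda}}\otimes V_{\mathbf v-m\delta,j}[-j], \]
where $\lambda$ runs over partitions of $m$. This uses that every simple summand has support the closure of some stratum $S_{m,\lambda}$ with constant coefficients. By Lemma \ref{lem:normalization}, $\operatorname{IC}_{\overline S_{m,\lambda}}\cong R\nu_{\lambda*}\QQ_{Z_\lambda}[2\ell(\lambda)]$, and $Z_\lambda$ is contractible. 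Taking hypercohomology therefore gives
\[ P(\Quot^{\mathbf v};q)=\sum_{m,\lambda,j}q^{D(\mathbf v)+j-2\ell(\lambda)}\dim V_{\mathbf v-m\delta,j}. \]

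The second step identifies the point multiplicities with the compact images. We would use the standard fact that, for a proper map from a smooth source, the map $H_c^*\to H^*$ of the source factors through the summands of $R\pi_*$. On each summand it is the compact-to-ordinary map of that summand's hypercohomology. By the computation after Lemma \ref{lem:normalization}, this map vanishes for every $\operatorname{IC}_{\overline S_{m,\lambda}}$ with $\ell>0$, since $Z_\lambda$ is contractible of positive dimension. It is the identity for the skyscraper summands, which are the $m=0$ summands supported at the origin. Hence $I^{D(\mathbf v)+j}_{\mathbf v}\cong V_{\mathbf v,j}$, which is the last assertion of the theorem. We have to check the degree bookkeeping: the shift $[D(\mathbf v)]$ puts a skyscraper in perverse degree $j$ into ordinary degree $D(\mathbf v)+j$. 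Since $H_c$ is not a direct summand functor of a non-proper target, we would argue via $R\pi_!=R\pi_*$ and compute $H_c(\Sym X,-)\to H(\Sym X,-)$ summandwise. The target $\Sym^{a}X$ is affine and conical, so the contraction argument of Lemma \ref{lem:core} applies to each IC summand.

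Third, we substitute $D(\mathbf v)=D(\mathbf w)+2m$ with $\mathbf w=\mathbf v-m\delta$ (Lemma \ref{lem:smooth-dimension}). The exponent becomes $(D(\mathbf w)+j)+\sum_k(2k-2)a_k(\lambda)$, since $2m-2\ell=\sum_k(2k-2)a_k$. Summing over partitions $\lambda$ of $m$ gives the coefficient of $\mathbf z^{m\delta}$ in $\prod_k(1-q^{2k-2}\mathbf z^{k\delta})^{-1}$, and summing over $\mathbf w$ and $j$ gives $\sum_{\mathbf w}P(I_{\mathbf w};q)\mathbf z^{\mathbf w}$. This proves \eqref{eq:all-charge}. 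Specializing to balanced charges $\mathbf v=n\delta$, so that $\mathbf w=(n-m)\delta$ is also balanced, gives \eqref{eq:factorization} with $t=\mathbf z^\delta$.

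The main obstacle is the second step: justifying that the compact-to-ordinary map respects the noncanonical decomposition and kills all positive-dimensional summands. We would handle it through the naturality of $H_c\to H$ as a morphism of functors applied to $R\pi_*$, using the contractibility of each $Z_\lambda$.
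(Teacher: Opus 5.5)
Your proposal is correct and follows the paper's proof essentially step for step. It uses the decomposition theorem together with Lemma \ref{lem:local-multiplicities}, kills the compact-to-ordinary map on the positive-dimensional summands via Lemma \ref{lem:normalization} so that the point summands give $V_{\mathbf v,j}\cong I_{\mathbf v}^{D(\mathbf v)+j}$, and finishes with the degree count $D(\mathbf v)=D(\mathbf w)+2m$ summed over partitions; your naturality argument for $R\Gamma_c\to R\Gamma$ applied to $R\pi_!=R\pi_*$ just makes explicit what the paper means by applying the natural map to the chosen decomposition, and the extra appeal to the contraction argument of Lemma \ref{lem:core} is not needed.
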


\begin{proof}
Since $\pi_{\mathbf v}$ is projective and $\Quot^{\mathbf v}[\CC^2/G]$ is smooth, the decomposition theorem applies to $\mathcal C_{\mathbf v}$ \cite{BBD,deCataldoMigliorini}.
Choose a decomposition into shifted semisimple perverse sheaves,
and use Lemma \ref{lem:local-multiplicities} to describe its simple summands.
Apply the natural map from compactly supported to ordinary hypercohomology to this same decomposition.

By Lemma \ref{lem:normalization}, every positive-dimensional support contributes zero to this map.
A point-supported summand $V_{\mathbf v,j}[-j]$ contributes its whole multiplicity, in ordinary degree $D(\mathbf v)+j$.
Hence
\begin{equation}
\label{eq:point-image}
V_{\mathbf v,j}\cong I_{\mathbf v}^{D(\mathbf v)+j}.
\end{equation}
The splitting of the decomposition theorem need not be canonical.
Its multiplicities, and the compact intersection image used here,
are independent of that choice.

Now consider the summand indexed by $(m,\lambda,j)$, with $\bfw=\mathbf v-m\delta$.
Its normalization has ordinary cohomology only in degree zero.
Including the intersection-complex and perverse shifts, its contribution to $H^d(\Quot^{\mathbf v}[\CC^2/G])$ occurs at
\[ d=D(\mathbf v)+j-2\ell(\lambda) =D(\bfw)+j+2m-2\ell(\lambda).
\]
Together with \eqref{eq:point-image}, this proves the coefficient identity
\begin{equation}
\label{eq:coefficient}
P(\Quot^{\mathbf v}[\CC^2/G];q) = \sum_{m=0}^{a(\mathbf v)} \ \sum_{\lambda\vdash m} q^{2m-2\ell(\lambda)} P(I_{\mathbf v-m\delta};q).
\end{equation}
All sums in a fixed charge are finite.

A part of size $k$ contributes degree $2k-2$ and charge $k\delta$, and it may occur any number of times.
Therefore
\[ \sum_{m\ge0}\sum_{\lambda\vdash m} q^{2m-2\ell(\lambda)}\mathbf z^{m\delta} = \prod_{k\ge1}\frac{1}{1-q^{2k-2}\mathbf z^{k\delta}}.
\]
Summing \eqref{eq:coefficient} gives \eqref{eq:all-charge}.
On the balanced ray $\mathbf v=n\delta$, every residual charge is $(n-m)\delta$; this gives \eqref{eq:factorization}.
\end{proof}

\begin{corollary}[Reconstruction from collision cohomology]
\label{cor:reconstruction}
If $K_n(q)=\sum_d\dim I_n^d q^d$, then
\[ P(P_n;q)=P(\Hilb^n[\CC^2/G];q) =\sum_{m=0}^n\ \sum_{\lambda\vdash m} q^{2m-2\ell(\lambda)}K_{n-m}(q).
\]
Thus the graded dimensions of the geometrically defined spaces $I_n$ determine the full Betti series.
\end{corollary}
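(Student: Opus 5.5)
The corollary is the balanced specialization of the coefficient identity \eqref{eq:coefficient} from the proof of Theorem \ref{thm:support}, combined with Lemma \ref{lem:core}. I would proceed in three short steps.

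First, identify the left-hand side. Lemma \ref{lem:core} shows that scalar dilation makes $\Hilb^n[\CC^2/G]=\Quot^{n\delta}[\CC^2/G]$ semi-projective with core $P_n$, and that the inclusion $P_n\hookrightarrow\Hilb^n[\CC^2/G]$ is a rational cohomology isomorphism. Hence $P(P_n;q)=P(\Hilb^n[\CC^2/G];q)$ degree by degree.

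Second, specialize \eqref{eq:coefficient} to $\mathbf v=n\delta$. Here $a(n\delta)=\min_i\lfloor nd_i/d_i\rfloor=n$, so $m$ runs from $0$ to $n$. The residual charge is $\mathbf v-m\delta=(n-m)\delta$, whose compact image is $I_{n-m}$ by Definition \ref{def:collision}. Its Poincaré series is therefore $K_{n-m}(q)$. Substituting gives the displayed formula. Equivalently, one can take the $t^n$ coefficient of \eqref{eq:factorization}, expanding $F(q,t)=\sum_{m}\sum_{\lambda\vdash m}q^{2m-2\ell(\lambda)}t^m$, where each part of size $k$ contributes $q^{2k-2}t^k$.

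Third, for the final sentence, I would argue by induction on $n$. The $m=0$ term is exactly $K_n(q)$, and every other term involves only $K_{n'}$ with $n'<n$. So the series $K_0,K_1,\dots$ determine $P(P_n;q)$ for all $n$. Conversely, $K_G=B_G/F$ is determined by $B_G$, because $F$ is invertible as a power series with constant term $1$.

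The only point needing care is the bookkeeping of degrees. The shift $q^{2m-2\ell(\lambda)}$ must match the degree $D(\mathbf v)+j-2\ell(\lambda)$ computed in the theorem's proof, with $D(n\delta)=2n$ from Lemma \ref{lem:smooth-dimension}. That matching is already established in the proof of Theorem \ref{thm:support}, so no real obstacle remains.
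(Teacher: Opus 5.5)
Your proposal is correct and is essentially the paper's proof, which simply combines the coefficient identity \eqref{eq:coefficient}, specialized to $\mathbf v=n\delta$ (so $a(n\delta)=n$ and the residual charges are $(n-m)\delta$), with the core isomorphism of Lemma \ref{lem:core}. The induction in your third step is harmless but unnecessary, since the displayed formula already expresses $P(P_n;q)$ directly in terms of $K_0,\dots,K_n$.
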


\begin{proof}
Combine \eqref{eq:coefficient} with Lemma \ref{lem:core}.
\end{proof}

\subsection{The Betti product on a general surface stack}

The local support theorem globalizes in the locally balanced sector.
For a quotient surface $X$, define its coarse surface factor by
\begin{equation}
\label{eq:coarse-factor}
\mathcal F_X(q,t)=\prod_{k\ge1} \frac{\prod_{i\text{ odd}}(1+q^{i+2k-2}t^k)^{b_i(X)}} {\prod_{i\text{ even}}(1-q^{i+2k-2}t^k)^{b_i(X)}}.
\end{equation}
Only $0\le i\le4$ occur.
If $X$ is smooth this is $F_X$.
For singular $X$ the definition uses the cohomology of $X$; it is NOT the Betti series of its possibly singular Hilbert schemes.

\begin{theorem}[Global locally balanced factorization]
\label{thm:global-betti}
Let $\mcX$ be an effective smooth surface stack with quasi-projective coarse space $X$, and assume its locally balanced Hilbert spaces are of finite type with projective Hilbert-Chow morphisms.
For $p\in\operatorname{Sing}X$, let $G_p^{\mathrm{can}}$ be the small tangent stabilizer of $X^{\mathrm{can}}$.
Then
\begin{equation*}
 \boxed{\quad B_{\mcX}^{\mathrm{lr}}(q,t):= \sum_{n\ge0}P(\Hilb^n_{\mathrm{lr}}(\mcX);q)t^n =\mathcal F_X(q,t) \prod_{p\in\operatorname{Sing}X}K_{G_p^{\mathrm{can}}}(q,t).
 \quad}
\end{equation*}
This is an identity of ordinary Betti series, also when $X$ is nonproper.
It does not require purity of its cohomology.
\end{theorem}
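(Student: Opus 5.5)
The plan is to globalize the proof of Theorem \ref{thm:support} by applying Theorem \ref{thm:general} to the Hilbert--Chow morphism $\pi_n:\Hilb^n_{\mathrm{lr}}(\mcX)\ra\Sym^nX$. First reduce to the canonical stack. By Proposition \ref{prop:reflection} and \eqref{eq:canonical-reduction}, $\Hilb^n_{\mathrm{lr}}(\mcX)\cong\Hilb^n(\mcX^{\mathrm{can}})$ compatibly with the maps to $\Sym^nX$. We may therefore assume $\mcX=X^{\mathrm{can}}$. This stack is a scheme off the finite set $\operatorname{Sing}X$ and is the small quotient chart $[\CC^2/G_p^{\mathrm{can}}]$ near each singular point $p$. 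In particular the stacky curves disappear in the balanced sector. Smoothness of the source (Theorem \ref{thm:global-hilbert-smooth}) and the assumed projectivity of $\pi_n$ let us apply the decomposition theorem.

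Next, identify the summands. Stratify $\Sym^nX$ by support type: a cycle $\sum_{p\in\operatorname{Sing}X}n_p[p]+\sum_i\lambda_ix_i$ with the $x_i$ distinct smooth points. By formal locality (Corollary \ref{prop:locality}) and the product clause of Theorem \ref{thm:general}, $\pi_n$ is locally a product of two kinds of maps:
\begin{itemize}
\item the origin problems $\pi_{n_p}$ of Proposition \ref{prop:support-map} at each singular point;
\item ordinary semismall Hilbert--Chow maps at the smooth points.
\end{itemize}
Repeating the proof of Lemma \ref{lem:local-multiplicities} in this global setting, the simple summands are $\operatorname{IC}$ of closures of strata of the following shape. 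Each singular point carries a point-supported coefficient $V_{n_p\delta,j_p}\cong I_{n_p}^{2n_p+j_p}$ by \eqref{eq:point-image}. The moving part is a multiset of parts $k\ge1$ on $X$, and its stratum closure has normalization $\prod_k\Sym^{a_k}X$. Here the smooth points are allowed to specialize into $\operatorname{Sing}X$, which is why the moving factor involves all of $X$ and not just $U$.

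Then compute hypercohomology. The key claim is that the intersection complex of such a closure is $\nu_*\QQ[\dim]$ of $\prod_k\Sym^{a_k}X$. This holds because $X$ has only quotient singularities, so it and its symmetric powers are rational homology manifolds and $\nu$ is finite. Hence each summand contributes
\[ q^{\sum_k(2k-2)a_k}\prod_kP(\Sym^{a_k}X;q)\cdot\prod_pq^{2n_p+j_p}, \]
after the same shift bookkeeping as in the proof of Theorem \ref{thm:support}. The ingredient that makes this work is additivity of source dimension, $D(n\delta)=2n$ from Lemma \ref{lem:smooth-dimension}. Macdonald's formula for the graded-symmetric powers $H^*(\Sym^aX)=\Sym^a H^*(X)$ then assembles the moving factors into $\mathcal F_X(q,t)$, by exactly the graded-symmetric-algebra interpretation of \eqref{eq:coarse-factor}. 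The point-supported factors assemble into $\prod_pK_{G_p^{\mathrm{can}}}(q,t)$. No purity is needed, because we only use ordinary cohomology of IC complexes and of symmetric products.

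The main obstacle is the interaction between moving supports and the singular points. We need that no new relevant summand appears when a moving block collides with $p$, and that the coefficient local systems along the moving strata remain constant even though their closures pass through $\operatorname{Sing}X$. The collision is handled by the local factorization: near $p$, the decomposition of $\pi_n$ restricted to the chart is exactly the one in Theorem \ref{thm:support}. There every summand with positive-dimensional support is $\operatorname{IC}$ of an $\overline S_{m,\lambda}$, which is the local germ of the global $\nu_*\QQ$ summand, and only the point summands $I_{n_p}$ are new. I would check this identification stalkwise using Lemma \ref{lem:normalization} and uniqueness of the IC extension. Constancy of the local systems would follow as in Lemma \ref{lem:local-multiplicities}, from orientation of top classes and triviality of permutation monodromy.
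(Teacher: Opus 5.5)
Your proposal is correct and follows essentially the same route as the paper's proof. You reduce to $X^{\mathrm{can}}$ via \eqref{eq:canonical-reduction} and apply the decomposition theorem to the Hilbert--Chow map, using strata indexed by $(n_p)_p$ and $\lambda$. You identify the intersection complexes with pushforwards from the rational-homology-manifold normalizations $\prod_k\Sym^{a_k}X$, and read off the point-supported multiplicities $I_{n_p}^{2n_p+j_p}(G_p)$ by disjoint-support locality with constant coefficient systems. The bookkeeping uses $D(n\delta)=2n$, and the moving factors are assembled into $\mathcal F_X$ via $H^*(\Sym^aX)=\Sym^aH^*(X)$.
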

\begin{proof}
By \eqref{eq:canonical-reduction}, replace $\mcX$ by $X^{\mathrm{can}}$.
Its stacky locus is finite; write it as $P$, with groups $G_p$.
Local balance is an open-and-closed condition here.
In a neighborhood of $p$, every moving orbit contributes a regular representation, so the residual charge modulo the regular representation is locally constant.
The local dimension calculation gives $\dim\Hilb^n_{\mathrm{lr}}=2n$.
The Hilbert-Chow morphism records the integer regular lengths in $\Sym^nX$.

For $n_p\ge0$ and $\lambda\vdash m=n-\sum_pn_p$, consider the stratum of cycles
\[ \sum_{p\in P}n_pp+\sum_{a=1}^{\ell(\lambda)}\lambda_ax_a,
 \qquad x_a\in X\setminus P,\quad x_a\ne x_b\ (a\ne b).
\]
The normalization of its closure is $Z_\lambda=\prod_{k\ge1}\Sym^{a_k(\lambda)}X$, with the finite map which weights each cycle by $k$ and adds $\sum_pn_pp$.
It is generically one-to-one.
Quotient singularities, their products and symmetric powers are rational homology manifolds over $\QQ$,
by the finite-quotient local homology argument of Lemma \ref{lem:normalization}.
Consequently the intersection complex is the pushforward of $\QQ_{Z_\lambda}[2\ell(\lambda)]$.

The disjoint-support argument of Lemma \ref{lem:local-multiplicities} applies at every $p$.
An exact-support summand uses the unique ordinary punctual top class at each moving point and a point-supported local multiplicity $V_{p,n_p,j_p}=I_{n_p}^{2n_p+j_p}(G_p)$ at each fixed point.
The moving fundamental classes have even degree and are preserved by changes of complex coordinates and by permutations.
The fixed point factors are independent of the moving configuration.
Thus their coefficient local systems are constant.
The dimension identity $2n=2m+\sum_p2n_p$ gives the contribution
\[ q^{2m-2\ell(\lambda)}P(Z_\lambda;q) \prod_{p\in P}P(I_{n_p}(G_p);q).
\]
This calculation uses the decomposition theorem, not additivity over locally closed strata.

Finally $H^*(\Sym^aX;\QQ)$ is the invariant part of $H^*(X^a;\QQ)$, hence the graded-symmetric power of $H^*(X;\QQ)$.
An even generator gives a denominator and an odd generator an exterior numerator.
Summing over $a_k(\lambda)$ therefore gives
\[ \sum_{m\ge0}\sum_{\lambda\vdash m} q^{2m-2\ell(\lambda)}P(Z_\lambda;q)t^m=\mathcal F_X(q,t).
\]
Summing independently over the $n_p$ proves the theorem.
\end{proof}

For $X=\CC^2/G$ with $G$ small, contraction gives only $b_0(X)=1$.
The product is exactly $B_G=F K_G$.
For a root stack over a smooth surface, the coarse singular set is empty and the locally regular product is the ordinary surface series.
The full root-stack series still contains the curve factors of \eqref{eq:root-Betti}.

The same proof gives a full, all-charge extension when the effective stack has only isolated stacky points.
Set
\[ K^{\mathrm{all}}_{G_p}(q,T) =\sum_{\mathbf v}P(I_{\mathbf v}(G_p);q) T^{|\mathbf v|/|G_p|}.
\]
If specialization of the numerical classes to stack degree has finite coefficient sums, and the corresponding Hilbert-Chow morphisms are projective,
then
\begin{equation*}
 \sum_dP(\Quot_d^{\mathrm{all}}(\mcX);q)T^d =\mathcal F_X(q,T)\prod_pK^{\mathrm{all}}_{G_p}(q,T).
\end{equation*}
Indeed, a stratum with residual charges $\mathbf v_p$ has source dimension $2m+\sum_pD(\mathbf v_p)$.
Substituting the local multiplicity $I_{\mathbf v_p}^{D(\mathbf v_p)+j_p}$ again leaves the shift $2m-2\ell(\lambda)$.
One sums separately over smooth components of each dimension.
A common multiple of the stabilizer orders clears the denominators of the support cycles.

The factors $K_{G_p}$ are compact intersection images in the \emph{affine local models}.
They are not the compact-to-ordinary image of the global Quot space, which for a projective space is its entire cohomology.
Thus the global theorem leaves a definite local geometric task: understand the formation of punctual algebras and their ambient intersection maps.
Sections \ref{qfig:sec-intersection} and \ref{sec:formation} undertake that task.
In the full theory with stacky curves, the same task includes the coupled crossings of the coupled crossing grid in the sequel work; its general assembly is \eqref{eq:motivic-alpha} and \eqref{eq:IC-Betti}.

\subsection{Lefschetz symmetry and compact intersections}

\begin{theorem}[Lefschetz symmetry of the collision spaces]
\label{thm:lefschetz}
Let $L_{\mathbf v}$ be cup product with the first Chern class of a $\pi_{\mathbf v}$-ample line bundle.
It preserves $I_{\mathbf v}$, and for every $j\ge0$,
\begin{equation}
\label{eq:lefschetz}
L_{\mathbf v}^{\,j}:
I_{\mathbf v}^{D(\mathbf v)-j} \xrightarrow{\ \sim\ } I_{\mathbf v}^{D(\mathbf v)+j}.
\end{equation}
In particular,
\[ P(I_n;q)=q^{4n}P(I_n;q^{-1}).
\]
\end{theorem}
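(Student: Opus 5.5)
The plan is to deduce the symmetry from the relative hard Lefschetz theorem applied to $\mathcal C_{\mathbf v}=R\pi_{\mathbf v*}\QQ[D(\mathbf v)]$, combined with the identification \eqref{eq:point-image} of point-supported multiplicities with $I_{\mathbf v}$.

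\emph{Step 1: $L_{\mathbf v}$ preserves $I_{\mathbf v}$.} Cup product with $c_1$ of a line bundle on $\Quot^{\mathbf v}[\CC^2/G]$ acts on both $H_c^*$ and $H^*$. The forgetful map $H_c^*\to H^*$ is a map of $H^*$-modules, so it commutes with $L_{\mathbf v}$. Hence $L_{\mathbf v}$ maps the image $I_{\mathbf v}$ into itself, raising degree by two.

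\emph{Step 2: the action on the perverse summands.} By the decomposition theorem for the projective map $\pi_{\mathbf v}$ with smooth source, $L_{\mathbf v}$ defines a map $\mathcal C_{\mathbf v}\to\mathcal C_{\mathbf v}[2]$. The relative hard Lefschetz theorem \cite{BBD,deCataldoMigliorini} says that $L^j:{}^pH^{-j}(\mathcal C_{\mathbf v})\to{}^pH^{j}(\mathcal C_{\mathbf v})$ is an isomorphism for every $j$. Take the summand of ${}^pH^{\pm j}$ supported at the origin. Morphisms between semisimple perverse sheaves respect isotypic components, so the skyscraper-isotypic part maps isomorphically. This gives $V_{\mathbf v,-j}\cong V_{\mathbf v,j}$, with the isomorphism induced by $L^j$.

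\emph{Step 3: transfer to $I_{\mathbf v}$.} We need the isomorphism \eqref{eq:point-image}, $V_{\mathbf v,j}\cong I^{D(\mathbf v)+j}_{\mathbf v}$, to be compatible with $L$. This is the step I expect to be the main obstacle, because the splitting in the decomposition theorem is noncanonical.

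I would handle it with the perverse filtration, which is canonical, and check two facts:
\begin{itemize}
\item the compact-to-ordinary map kills every summand with positive-dimensional support (Lemma \ref{lem:normalization});
\item $L$ is compatible with the perverse filtration, shifting perverse degree by $2$.
\end{itemize}
Together these give a canonical identification of $I_{\mathbf v}$ with the image of the point-supported graded pieces, and under it $L$ acts through $\mathrm{Gr}^p$.

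Alternatively, use de Cataldo--Migliorini's result that a splitting can be chosen compatibly with $L$: take the $L$-primitive (Lefschetz) decomposition of ${}^pH^*(\mathcal C_{\mathbf v})$ and lift it. In either version, the cross terms from positive-dimensional summands vanish on $I_{\mathbf v}$ by Step 1 together with Lemma \ref{lem:normalization}. Combining with Step 2 yields \eqref{eq:lefschetz}.

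\emph{Step 4: the balanced case.} For $\mathbf v=n\delta$, Lemma \ref{lem:smooth-dimension} gives $D(n\delta)=2n$, so \eqref{eq:lefschetz} says $\dim I_n^{2n-j}=\dim I_n^{2n+j}$. Multiplying by $q^d$ and summing over degrees, this symmetry about degree $2n$ is exactly $P(I_n;q)=q^{4n}P(I_n;q^{-1})$.
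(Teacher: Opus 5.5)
Your proposal is correct and follows essentially the same route as the paper: Step 1 (compatibility of $H_c^*\to H^*$ with cup product), relative hard Lefschetz for ${}^pH^{\pm j}(\mathcal C_{\mathbf v})$ restricted to point-supported multiplicities, transfer via $V_{\mathbf v,j}\cong I_{\mathbf v}^{D(\mathbf v)+j}$, and finally $D(n\delta)=2n$. The paper resolves your Step 3 concern about the noncanonical splitting exactly as your perverse-filtration version does: $I_{\mathbf v}^d$ meets only the single perverse graded piece of degree $d-D(\mathbf v)$, so the off-diagonal components of $L_{\mathbf v}$ cannot contribute on it.
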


\begin{proof}
The map $H_c^*(\Quot^{\mathbf v}[\CC^2/G])\ra H^*(\Quot^{\mathbf v}[\CC^2/G])$ is compatible with cup product by ordinary cohomology classes.
Its image is therefore preserved by $L_{\mathbf v}$.

Relative hard Lefschetz for the projective map $\pi_{\mathbf v}$ gives isomorphisms
\[ L_{\mathbf v}^{\,j}:
{}^pH^{-j}(\mathcal C_{\mathbf v}) \xrightarrow{\ \sim\ } {}^pH^j(\mathcal C_{\mathbf v}).
\]
Taking point-supported multiplicities gives $V_{\mathbf v,-j}\cong V_{\mathbf v,j}$.
Under \eqref{eq:point-image} these are the groups in \eqref{eq:lefschetz}.
This identifies the actual cup-product map:
in cohomological degree $d$, the induced perverse filtration on the compact intersection image has only the single possible degree $d-D(\mathbf v)$.
There are thus no other perverse graded pieces on that image into which its cup-product operator can mix.
Finally $D(n\delta)=2n$, giving the displayed symmetry.
\end{proof}

Equivalently, $I_{\mathbf v}$ is a finite-dimensional Lefschetz representation centered at $D(\mathbf v)$.
If
\[ \operatorname{Prim}^{D-a} =\ker\bigl(L_{\mathbf v}^{a+1}:
I_{\mathbf v}^{D-a}\ra I_{\mathbf v}^{D+a+2}\bigr),
\qquad D=D(\mathbf v),
\]
then it is the direct sum of the strings
\[ \operatorname{Prim}^{D-a},\ L_{\mathbf v}\operatorname{Prim}^{D-a},\ \ldots,\ L_{\mathbf v}^{a}\operatorname{Prim}^{D-a}.
\]
The collision space contains these entire strings.
It is not only their lowest-degree primitive subspace.

\begin{prop}[Intersection-image interpretation]
\label{prop:intersection-image}
Let $i:P_{\mathbf v}\hookrightarrow \Quot^{\mathbf v}[\CC^2/G]$ and $D=D(\mathbf v)$.
The dimension of $I_{\mathbf v}^d$ is the rank of the map
\begin{equation}
\label{eq:intersection-map}
H_{2D-d}(P_{\mathbf v}) \xrightarrow{i_*}H_{2D-d}^{\mathrm{BM}}(\Quot^{\mathbf v}[\CC^2/G]) \xrightarrow{\mathrm{PD}}H^d(\Quot^{\mathbf v}[\CC^2/G]) \xrightarrow{i^*}H^d(P_{\mathbf v}).
\end{equation}
Equivalently, it is the rank of the pairing of this map with $H_d(P_{\mathbf v})$.
\end{prop}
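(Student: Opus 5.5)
The plan is to identify the composite in \eqref{eq:intersection-map} with the map $H_c^d\ra H^d$ defining $I_{\mathbf v}^d$, up to isomorphisms on both ends. Write $M=\Quot^{\mathbf v}[\CC^2/G]$, which by Lemma \ref{lem:smooth-dimension} is smooth of pure complex dimension $D$ (componentwise, if several components occur). The two ends are handled as follows.

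\emph{Right-hand end.} By Lemma \ref{lem:core}, $i^*:H^d(M)\ra H^d(P_{\mathbf v})$ is an isomorphism. So the rank of the composite equals the rank of $\mathrm{PD}\circ i_*$.

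\emph{Left-hand end.} The plan is to show that $i_*:H_{2D-d}(P_{\mathbf v})\ra H^{\mathrm{BM}}_{2D-d}(M)$ factors through a surjection onto the image of ordinary homology. Indeed, $i_*$ factors as
\[ H_{2D-d}(P_{\mathbf v})\xrightarrow{\sim}H_{2D-d}(M)\longrightarrow H^{\mathrm{BM}}_{2D-d}(M). \]
The first arrow is the core isomorphism on homology from Lemma \ref{lem:core}. The second is the natural forget-support map, valid because $P_{\mathbf v}$ is compact, so classes pushed forward from it are BM classes. Next use Poincaré duality on the smooth oriented manifold $M$ of real dimension $2D$:
\[ H_{2D-d}(M)\cong H_c^d(M), \qquad H^{\mathrm{BM}}_{2D-d}(M)\cong H^d(M). \]
Under these identifications the forget-support map becomes the natural map $H_c^d(M)\ra H^d(M)$. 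This compatibility is the standard naturality of cap product with the fundamental class (BM fundamental class) and should be stated with a brief justification.

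Combining the two ends, the rank of \eqref{eq:intersection-map} equals the rank of $H_c^d\ra H^d$, which is $\dim I_{\mathbf v}^d$.

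\emph{Pairing formulation.} For the equivalent statement, note that $H^d(P_{\mathbf v};\QQ)\cong H_d(P_{\mathbf v};\QQ)^\vee$ by universal coefficients over a field. The rank of a map into $H^d(P_{\mathbf v})$ therefore equals the rank of the induced bilinear pairing
\[ H_{2D-d}(P_{\mathbf v})\times H_d(P_{\mathbf v})\ra\QQ, \qquad (\alpha,\beta)\mapsto\langle i^*\mathrm{PD}\,i_*\alpha,\beta\rangle. \]
This pairing is the intersection pairing of cycles on the compact core inside the ambient smooth $M$.

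\emph{Main obstacle.} The delicate step is the compatibility claim: that the Poincaré duality isomorphisms intertwine the forget-support map with $H_c\ra H^*$, and that the core isomorphism of \cite{HRV} on homology is the one induced by $i_*$. I would check the former by writing both dualities as cap product with the BM fundamental class $[M]$ and using the projection formula. The latter holds because the \cite{HRV} isomorphism is induced by inclusion, via the deformation retraction given by the $\CC^*$-flow.
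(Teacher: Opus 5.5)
Your proposal is correct and follows essentially the same route as the paper. Both arguments use Poincar\'e duality to turn $H_c^d\ra H^d$ into the forget-support map $H_{2D-d}(M)\ra H^{\mathrm{BM}}_{2D-d}(M)$, use Lemma \ref{lem:core} to replace the source and target by $H_{2D-d}(P_{\mathbf v})$ and $H^d(P_{\mathbf v})$, and use nondegeneracy of the rational homology--cohomology pairing on the proper $P_{\mathbf v}$ for the pairing formulation.
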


\begin{proof}
Poincar\'e duality on the smooth variety $\Quot^{\mathbf v}[\CC^2/G]$ identifies the compact-to-ordinary map with the natural map
\[ H_{2D-d}(\Quot^{\mathbf v}[\CC^2/G]) \longrightarrow H_{2D-d}^{\mathrm{BM}}(\Quot^{\mathbf v}[\CC^2/G]).
\]
Lemma \ref{lem:core} identifies its ordinary homology source with $H_{2D-d}(P_{\mathbf v})$ and its cohomological target with $H^d(P_{\mathbf v})$.
This gives \eqref{eq:intersection-map}.
The rational homology-cohomology pairing on the proper space $P_{\mathbf v}$ is nondegenerate as a vector-space pairing,
so evaluating on $H_d(P_{\mathbf v})$ does not change the rank.
\end{proof}

For algebraic cycles of complementary dimensions the last pairing is their intersection in the smooth ambient $\Quot^{\mathbf v}[\CC^2/G]$.
This uses its normal embedding.
It is not a replacement of \eqref{eq:intersection-map} by a Poincar\'e pairing on the possibly singular space $P_{\mathbf v}$.

\subsection{Numerical classes and the global product without crossings}
\label{subsec:global-alpha}

Suppose the effective stack has disjoint smooth stacky curves $D_a$ with constant orders $r_a$, and isolated stacky points $p$ away from them.
Along the curves it has the root models of Section \ref{sec:support}; the $D_a$ lie in the smooth coarse locus.
Assume quasiprojective coarse space, finite-type coefficient Quot spaces, and projective support morphisms.
Let $G_p$ be the small tangent stabilizer at $p$ and define
\[ K_p(q,\mathbf x)=\sum_{\mathbf v} P(I_{\mathbf v}(G_p);q)\mathbf x^{\iota_p(\mathbf v)}.
\]
Use the coarse factor $\mathcal F_X$ of \eqref{eq:coarse-factor}.

\begin{theorem}[Global class-valued support product]
\label{thm:global-alpha}
Under these hypotheses, the ordinary Betti series is
\begin{align*}
\sum_\alpha P(\Quot^\alpha\mcX;q)\mathbf x^\alpha ={}&\mathcal F_X(q,\mathbf x^u) \\
&\cdot\prod_a\prod_{j=1}^{r_a-1}\prod_{m\ge0} \frac{(1+q^{2m+1}\mathbf x^{mu+\beta_{a,j}})^{b_1(D_a)}} {(1-q^{2m}\mathbf x^{mu+\beta_{a,j}})^{b_0(D_a)} (1-q^{2m+2}\mathbf x^{mu+\beta_{a,j}})^{b_2(D_a)}} \\
&\cdot \prod_p K_p(q,\mathbf x).
\end{align*}
For the numerical balanced series one extracts, from the entire right-hand side, the coefficients of $\mathbf x^{nu}$:
\begin{equation}
\label{eq:numerical-balanced}
B_{\mcX}^{\rm num-reg}(q,t) =\sum_{n\ge0}t^n[\mathbf x^{nu}] \left(\text{right-hand side of theorem \ref{thm:global-alpha}}\right).
\end{equation}
\end{theorem}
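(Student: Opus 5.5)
The plan is to run the support-decomposition argument of Theorem \ref{thm:global-betti} once more, now on the full class-valued Quot spaces, with the pure-curve analysis of Section \ref{subsec:curve} and the isolated-point analysis of Theorem \ref{thm:support} as the two local inputs. Fix a numerical class $\alpha$ and a smooth component $H\subset\Quot^\alpha\mcX$; by Theorem \ref{thm:global-hilbert-smooth} it is smooth, and by hypothesis its support morphism is projective. The target is the refined cycle space that records moving support on $X$, the jump cycles on each $D_a$ as in \eqref{eq:root-map}, and the charge at each isolated point $p$. I apply Theorem \ref{thm:general} to it. The support strata are indexed by four pieces of data: a partition type of moving ordinary blocks on $X$; for each $a$ and $j$, multiplicities $b_{a,j,m}$ of defect blocks of type $(m,\mathbf e_j)$ moving on $D_a$; the ordinary blocks that happen to lie on $D_a$; and residual charges $\mathbf v_p$ at each $p$.

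First, I will show the coefficient local systems are constant and factor as products, exactly as in Lemma \ref{lem:local-multiplicities}. Formal locality (Corollary \ref{prop:locality}) makes the map a product near a stratum point. That product has three kinds of factors: ordinary Hilbert--Chow factors at moving free points; the root-chart maps at points of $D_a$, where Proposition \ref{prop:curve-supports} gives semismallness with rank-one constant coefficients; and the origin problem at each $p$. A summand with exact support must use a top class at every moving point and a point-supported summand at each $p$. By \eqref{eq:point-image} that point-supported summand has multiplicity $I^{D(\mathbf v_p)+j_p}_{\mathbf v_p}(G_p)$. Ordinary blocks restricted to $D_a$ are irrelevant, by the same codimension count as in Proposition \ref{prop:curve-supports}. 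Hence the relevant supports are the closures whose normalizations are
\[\prod_k\Sym^{a_k}X\times\prod_{a,j,m}\Sym^{b_{a,j,m}}D_a .\]
These are rational homology manifolds: $X$ has only quotient singularities, and $D_a$ is a smooth curve. So their intersection complexes are constant sheaves pushed forward under finite maps, as in Lemma \ref{lem:normalization}.

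Second, I do the degree bookkeeping. The source dimension is additive: $2$ per unit of moving ordinary length, $2m+1$ for a defect block of charge $(m,\mathbf e_j)$, and $D(\mathbf v_p)$ at each $p$. Comparing this with the support dimension gives a shift of $2k-2$ for an ordinary part of size $k$, $2m$ for a defect block, and $j_p$ at $p$. Summing $P(\Sym^\bullet;q)$ over the multiplicities is a graded-symmetric-power computation. On $X$ it produces $\mathcal F_X(q,\mathbf x^u)$ exactly as in the proof of Theorem \ref{thm:global-betti}. On $D_a$, each mode $(m,\mathbf e_j)$ has class $\mathbf x^{mu+\beta_{a,j}}$ and contributes the factor
\[\frac{(1+q^{2m+1}\cdots)^{b_1}}{(1-q^{2m}\cdots)^{b_0}(1-q^{2m+2}\cdots)^{b_2}},\]
as in \eqref{eq:root-Betti}. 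The isolated points contribute $K_p(q,\mathbf x)$ through $\iota_p$. Monomials multiply correctly because the classes are additive under disjoint supports and under the induction rule of Section \ref{subsec:alpha-type}.

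Finally, \eqref{eq:numerical-balanced} needs no separate geometric argument. Numerical balance is the condition $\alpha=nu$ on the class alone, and the identity above is an equality of series in $\mathbf x$. So extracting the coefficient of $\mathbf x^{nu}$ from the full product is legitimate, including cross terms in which curve defects and point defects cancel.

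The main obstacle is the class bookkeeping along $D_a$. A stratum's class must depend only on the combinatorial data and not on the moving positions. I also need the ordinary surface factor on $X$ to absorb the ordinary blocks sitting on $D_a$, with no double counting against the curve factor. To handle this, I will first check that each $\beta_{a,j}$ is locally constant on $D_a$ (it is transported by the stabilizer local system, as noted in Section \ref{subsec:motivic-assembly}). I will then rely on the irrelevance of the specialization strata from Proposition \ref{prop:curve-supports}.
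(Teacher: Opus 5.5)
Your proposal is correct and is essentially the paper's own proof. You apply the decomposition theorem to the refined support map and split by disjoint supports into ordinary Hilbert--Chow factors, root-chart factors (Proposition \ref{prop:curve-supports}) and point-supported multiplicities $I_{\mathbf v_p}(G_p)$ with trivial monodromy; you use that the normalizations $\prod\Sym^{a_k}X\times\prod\Sym^{b}D_a$ are rational homology manifolds, read off the shifts $2m-2$ and $2m$ as source minus support dimension, take graded symmetric powers, and treat \eqref{eq:numerical-balanced} as coefficient extraction by definition. One small point of precision: the product structure of the map over a neighbourhood of a base stratum point comes from splitting over disjoint analytic neighbourhoods as in Lemma \ref{lem:local-multiplicities}, not from Corollary \ref{prop:locality}, which only describes the formal germ of the source at a single point.
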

\begin{proof}
The normalization of a closed support is a product of symmetric powers of $X$ and the $D_a$, with fixed residual charges at the isolated points.
Its transverse calculation is the product of the ordinary punctual calculation, the relevant root calculations in Proposition \ref{prop:curve-supports},
and the point-supported multiplicities of the isolated local theorem.
The first two factors have constant rank-one coefficients.
The last coefficients are the fixed vector spaces $I_{\mathbf v}(G_p)$, with their cohomological degrees.
Thus coefficient monodromy is trivial.

An ordinary mode of length $m$ has degree shift $2m-2$; a curve mode of type $(m,\mathbf e_j)$ has shift $2m$.
A point factor contributes its already graded space $I_{\mathbf v}(G_p)$.
These shifts follow by subtracting the dimension of the support from that of the smooth source; the identity $D(\mathbf v+m\delta)=D(\mathbf v)+2m$ supplies the isolated calculation.
Quotient surface singularities and their symmetric powers are rational homology manifolds over $\QQ$,
so their intersection complexes are the shifted constant complexes.
Taking the graded symmetric powers of the moving supports proves theorem \ref{thm:global-alpha}.
Sum separately over source components if the fixed numerical class has components of different dimensions.
Finally \eqref{eq:numerical-balanced} is the definition of the global numerical class condition.
\end{proof}

This formulation does not replace the numerical condition by a pointwise balanced-representation condition.
The maps $\iota_p$ and the curve classes may satisfy relations.
An identification of the numerical balanced sector with the locally balanced sector therefore requires a detection statement for those relations.
For the separately defined locally balanced sector, all curve gaps vanish,
the isolated charges are $n_p\operatorname{Reg}(G_p)$, and the formula simplifies to
\[ B_{\mcX}^{\rm lr}(q,t)=\mathcal F_X(q,t)\prod_pK_{G_p}(q,t).
\]
At a crossing or stabilizer jump, a curve-mode support can acquire new point-supported multiplicities.
One must use the coupled local model and its local systems in Theorem \ref{thm:general}; the independent-curve product in \ref{thm:global-alpha} is not asserted there.
These cases will be treated in the sequel paper.

Let $M=\Quot^{\mathbf v}[\CC^2/G]$, $D=D(\mathbf v)$, and $P=P_{\mathbf v}$ in the next three subsections.
Write $I^d(M)=\operatorname{im}(H_c^d(M)\ra H^d(M))$.

\subsection{The normal bundle is part of the answer}

An additive invariant of the abstract fibre need not determine the intersection map.
For example, let
\[ Z_r=\operatorname{Tot}_{\mathbb P^1}\mcO_{\mathbb P^1}(-r),
 \qquad P=\mathbb P^1,
\]
where $P$ is the zero section.
These surfaces all retract onto the same proper variety, but
\begin{equation}
\label{qfig:eq-minus-r}
\langle[P],[P]\rangle =\int_P c_1(N_{P/Z_r})=-r.
\end{equation}
For $r=0$ its rank is zero; for $r>0$ its rank is one.
Even when the ranks agree, the integral matrices differ.
Thus neither the abstract fibre nor its class in a Grothendieck ring determines the intersection matrix.

For scalar $G=\mu_r$, $r\geq2$, this is precisely the balanced of length one moduli space.
Here is a direct construction.
The two charts for a regular quotient are
\begin{equation*}
 (x^r-s,\ y-tx),\qquad (y^r-s',\ x-t'y).
\end{equation*}
Their quotient bases are $1,x,\ldots,x^{r-1}$ and $1,y,\ldots,y^{r-1}$.
The transitions are $t'=t^{-1}$, $s'=t^r s$, giving \(\operatorname{Tot}\mcO(-r)\).
These charts cover: away from the origin a free scalar orbit has a nonzero coordinate, while at the origin the cotangent space cannot have dimension two,
since its scalar character has multiplicity one.
At least one coordinate therefore generates the punctual algebra.
The zero fibre is $s=0$, namely $\mathbb P^1$.
Consequently
\begin{equation}
\label{qfig:eq-length-one-K}
P(P_1;q)=1+q^2,\qquad K_1(q)=q^2.
\end{equation}
The matrix of the nonzero compact pairing is $(-r)$.

\section{Punctual geometry at isolated stacky points}
\label{sec:formation}

The support theorem reduces the unknown local character to the punctual algebra and its embedding in a smooth Quot space.
We first recall what can already be computed from torus fixed points, then explain which geometry the combinatorial count records. We warn in the beginning genuine difficulties arise with non-abelian finite groups, so most results in this section is about the cyclic non-crepant groups $C_{r,a}$.

\subsection{The cyclic calculation and balanced partitions}
\label{subsec:balanced-partitions}
For $G=\frac1r(1,a)$ with $(a,r)=1$, give the monomial $x^iy^j$ color $i+aj\pmod r$.
A monomial ideal of $R$-colength $nr$ has balanced charge precisely when its diagram contains $n$ boxes of each color.
Such a diagram is called balanced.
The commuting two-dimensional torus preserves the fixed Hilbert scheme, and its isolated fixed points are these monomial ideals.

Bejleri-Zaimi use a generic one-parameter subgroup and the resulting Bia\l ynicki-Birula cells to compute the Betti numbers \cite{Bejleri_Zaimi_2023}.
More precisely, choose a subgroup positive on both plane coordinates and generic for the fixed colength.
Let $a_+(\lambda)$ be the dimension of its positive tangent subspace in $\Hom_{R,G}(I_\lambda,R/I_\lambda)$.
Smoothness and semiprojectivity give attracting affine cells of these dimensions.
Consequently
\[ E_c(\Hilb^n[\CC^2/G];u,v) =\sum_{\lambda\ { m balanced}}(uv)^{a_+(\lambda)},\qquad P(\Hilb^n[\CC^2/G];q) =\sum_{\lambda\ { m balanced}}q^{4n-2a_+(\lambda)}.
\]
The second formula uses smooth duality, not additivity of ordinary cohomology over attracting cells.
Reversing the chamber and using the proper core gives the equivalent repelling-cell convention.
The tangent statistic is obtained from the two usual hook characters, retaining only the $G$-invariant characters.
The periodicity results of Bejleri-Zaimi concern variation of the cyclic group parameters.
Here $G$ is fixed and all lengths vary;
the support factorization extracts $K_G$ from these same Betti numbers and gives it the geometric meaning of an intersection image.
No cyclic partition description is asserted for arbitrary nonabelian small groups.

\subsection{Review of Brian\c{c}on's work}

Put $R=\CC[x,y]$ and $\mathfrak m=(x,y)$.
A punctual quotient is a finite local algebra $Q=R/I$ supported at the origin.
Its maximal-ideal filtration records the orders of vanishing of functions that remain after points collide.
Its socle,
\[ \operatorname{Soc}(Q)=\{a\in Q:\mathfrak m a=0\},
\]
records the last surviving functions.
We distinguish collisions of free orbits, deformations inside a punctual fibre, and deformations through curvilinear punctual algebras.

\begin{defn}
A punctual algebra of length $N\geq2$ is \emph{curvilinear} if its cotangent space has dimension one.
Equivalently it is isomorphic to $\CC[z]/(z^N)$.
\end{defn}

Indeed a lift of a cotangent generator generates the algebra, by Nakayama's lemma.
A finite quotient of $\CC[z]$ supported at zero then has the displayed form.

\begin{lem}
\label{qfig:lem-curvilinear}
On the ordinary punctual Hilbert scheme of length $N$, the chart where $x$ generates is
\begin{equation*}
 \left(x^N,\ y-\sum_{j=1}^{N-1}c_jx^j\right),
 \qquad (c_1,\ldots,c_{N-1})\in\mathbb A^{N-1}.
\end{equation*}
Together with the corresponding $y$-chart it covers the curvilinear locus.
The tangent-direction map makes this locus a Zariski locally trivial $\mathbb A^{N-2}$-fibration over $\mathbb P^1$.
\end{lem}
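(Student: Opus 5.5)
We work on the punctual Hilbert scheme of length $N$ supported at the origin, and use the opening remark that a curvilinear algebra is $\CC[z]/(z^N)$ generated by any lift of a cotangent generator. The plan has three steps: describe the $x$-chart explicitly, show the two charts cover the curvilinear locus, and identify the tangent-direction map as a fibration.

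\textbf{The $x$-chart.} Suppose $x$ generates $Q=R/I$. Then $1,x,\ldots,x^{N-1}$ is a basis of $Q$, and since $Q$ is local, $x^N\in I$. Expanding $y$ in this basis gives $y\equiv\sum_{j=0}^{N-1}c_jx^j$. The constant term $c_0$ must vanish, because $y\in\mathfrak m$ and the support is the origin. Hence $I\supseteq(x^N,\ y-\sum_{j\ge1}c_jx^j)$. The right-hand ideal already has colength $N$, since its quotient is $\CC[x]/(x^N)$, so equality holds.

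Conversely, each $(c_j)\in\mathbb A^{N-1}$ gives a punctual ideal of colength $N$ in which $x$ generates. The coefficients $c_j$ are recovered as the coordinates of the image of $y$ in the basis $x^j$. I would run this in families over a base $T$: on the open locus of the punctual Hilbert scheme where $1,x,\ldots,x^{N-1}$ is a local frame of the tautological quotient, the coefficients are regular functions. This makes the chart an isomorphism onto an open subscheme, not just a bijection on points.

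\textbf{Covering the curvilinear locus.} Let $Q$ be curvilinear with $N\ge2$, so its cotangent space $\mathfrak m/\mathfrak m^2$ is a line spanned by the image of some $\alpha x+\beta y$. If the image of $x$ is nonzero there, $x$ generates $Q$ by Nakayama; otherwise the image of $y$ is nonzero and $y$ generates. Conversely, every point in either chart has cotangent space spanned by the image of $x$ (resp.\ $y$), so it is curvilinear. Hence the union of the two charts is exactly the curvilinear locus.

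\textbf{The tangent-direction map.} Send $Q$ to the line in $\mathfrak m/\mathfrak m^2\cong\CC^2$ annihilated by the cotangent map, equivalently the tangent line of the curvilinear germ. In the $x$-chart this line is the direction $[1:c_1]$, and in the $y$-chart it is $[c_1':1]$. Both are regular, and they agree on the overlap where $c_1\neq0$.

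Over the affine open $\{[1:c_1]\}\cong\mathbb A^1$ of $\mathbb P^1$, the fibre is the whole $x$-chart restricted to fixed $c_1$, namely $\mathbb A^{N-2}$ with coordinates $(c_2,\ldots,c_{N-1})$. Indeed, a point whose tangent direction is not the $y$-axis has $x$ nonzero in the cotangent space, so it lies in the $x$-chart. This gives a trivialization $\mathbb A^1\times\mathbb A^{N-2}$ over that open set, and similarly over $\{[c_1':1]\}$. These two opens cover $\mathbb P^1$, so the map is Zariski locally trivial with fibre $\mathbb A^{N-2}$.

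\textbf{Expected obstacle.} The main subtle point is the scheme-theoretic statement: that the chart is an open immersion, and that the tangent map is a morphism rather than just a map on points. I would settle both with the family argument via the local frame $1,x,\ldots,x^{N-1}$ of the tautological bundle. The transition between charts over the overlap, which is non-linear in the $c_j$, is not needed, since local triviality only uses each chart separately.
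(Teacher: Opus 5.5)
Your proposal is correct and follows the paper's own argument. Like the paper, you expand $y$ uniquely in the basis $1,x,\ldots,x^{N-1}$ with no constant term, use the Nakayama dichotomy on the one-dimensional cotangent space to cover the curvilinear locus by the two charts, and read $c_1$ as the tangent slope and $(c_2,\ldots,c_{N-1})$ as fibre coordinates over the two standard affine charts of $\mathbb P^1$; your family argument with the frame $1,x,\ldots,x^{N-1}$ just makes explicit the scheme-theoretic openness that the paper indicates by noting that the displayed family is free over its whole parameter ring.
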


\begin{proof}
If $x$ generates, $Q=\CC[x]/(x^N)$, and the image of $y$ has a unique expansion without constant term.
Conversely the displayed family has free basis $1,x,\ldots,x^{N-1}$ over its entire parameter ring.
Either $x$ or $y$ has nonzero cotangent image.
In the first chart $c_1$ is the tangent slope, and $c_2,\ldots,c_{N-1}$ are the affine fibre coordinates.
The second chart gives the other standard chart of the space of tangent directions.
\end{proof}

Brian\c{c}on's theorem states that the whole punctual Hilbert scheme is irreducible of dimension $N-1$,
and that the curvilinear locus is dense \cite{Briancon1977}.
This is a deeper input than the preceding chart calculation.
One independent route uses Basili's theorem that
\[ \{(X,Y)\in\operatorname{End}(\CC^N)^2:
       [X,Y]=0,\ X,Y\text{ nilpotent}\} \]
is irreducible of dimension $N^2-1$ \cite{Basili}.
Adding a cyclic vector gives a nonempty open subset of its product with $\mathbb A^N$.
Sending a cyclic triple to $\ker(f\mapsto f(X,Y)v)$ is the $GL_N$ frame bundle of the tautological quotient.
The punctual Hilbert scheme is consequently irreducible of dimension $N^2-1+N-N^2=N-1$, and the nonempty open locus in Lemma \ref{qfig:lem-curvilinear} is dense.
The irreducibility assertion about commuting nilpotents is the substantive input in this argument.

\subsection{Obstruction of equivariant curvilinear smoothing}

\begin{prop}
\label{qfig:prop-equivariant-curvilinear}
Let $G\subset GL_2(\CC)$ be small.
If $G$ is nonabelian, it has no invariant curvilinear punctual quotient of length at least two.

For $G=\frac1r(1,a)$, $r\geq2$, $\gcd(a,r)=1$, and $n\geq1$,
the $x$-generated balanced curvilinear quotients are exactly
\begin{equation}
\label{qfig:eq-coloured-curvilinear}
\left(x^{nr},\ y-\sum_{\substack{1\leq j<nr\\j\equiv a\pmod r}}c_jx^j\right).
\end{equation}
They form $\mathbb A^n$.
If $a\ne1$, the full equivariant curvilinear locus is the disjoint union of the $x$- and $y$-generated charts.
If $a=1$, it is an $\mathbb A^{n-1}$-fibration over $\mathbb P^1$.
\end{prop}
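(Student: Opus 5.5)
The plan is to use that a curvilinear punctual algebra $Q\cong\CC[z]/(z^N)$ has a one-dimensional cotangent space $\mathfrak m/(\mathfrak m^2+I)$. If $I$ is $G$-invariant, this line is a $G$-stable quotient of $\mathfrak m/\mathfrak m^2\cong U^\vee$. So $U^\vee$ has a one-dimensional $G$-invariant quotient, equivalently a $G$-stable line; $U$ is then reducible and $G$ is abelian, hence diagonalizable. This settles the nonabelian assertion at once, since a nonabelian finite subgroup of $GL_2(\CC)$ acts irreducibly on $U$. The only care needed is $N\ge2$, which guarantees $\mathfrak m\not\subset I$ so the cotangent line is nonzero.

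For $G=\frac1r(1,a)$ I work in the chart of Lemma \ref{qfig:lem-curvilinear} where $x$ generates: $I=(x^N,y-\sum_{j}c_jx^j)$ with $Q$ having basis $1,x,\dots,x^{N-1}$. The colours of these basis monomials are $j\bmod r$, so balance (each character with multiplicity $n$) forces $N=nr$, and conversely $N=nr$ gives every colour exactly $n$ times. Next I impose $G$-invariance. The generator $g$ sends $x\mapsto\zeta x$, $y\mapsto\zeta^a y$, and the ideal $I$ is invariant iff $g(y-f(x))\in I$. Reducing modulo $I$ using the unique expansion in the basis, this holds iff $\zeta^a f(x)=f(\zeta x)$, i.e.\ $c_j=0$ unless $j\equiv a\pmod r$. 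This yields \eqref{qfig:eq-coloured-curvilinear}. The admissible indices $1\le j<nr$ with $j\equiv a$ number exactly $n$, one in each block $[kr,(k+1)r)$, using $1\le a<r$ after reduction, so the chart is $\mathbb A^n$. The $y$-chart is symmetric, with exponents $j\equiv a^{-1}\pmod r$, by the same count.

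Finally I glue the charts. A point lies in both charts iff both $x$ and $y$ have nonzero cotangent image, i.e.\ the coefficient $c_1\neq0$. If $a\ne1$ (so $a\not\equiv1$), the index $j=1$ is excluded in \eqref{qfig:eq-coloured-curvilinear}. Then $c_1=0$ in the $x$-chart, so $y\in\mathfrak m^2+I$, and symmetrically in the $y$-chart, which makes the charts disjoint. Both are open in the curvilinear locus, and they cover it by the cotangent argument, giving the disjoint union.

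If $a=1$, the $x$-chart coordinates are $c_1,c_{r+1},\dots,c_{(n-1)r+1}$, with $c_1$ the tangent slope. The tangent-direction map to $\mathbb P^1$ is then defined on both charts with affine fibre coordinates $c_{kr+1}$, $k\ge1$. The main obstacle is checking that the transition on the overlap is a locally trivial $\mathbb A^{n-1}$-fibration. I would do this by restricting the locally trivial $\mathbb A^{N-2}$-fibration of Lemma \ref{qfig:lem-curvilinear} to its $G$-fixed locus. Here $G$ is scalar, so it commutes with the tangent map and acts fibrewise by linear-in-charts automorphisms. The fixed locus of a torus-like linear action on a trivialized affine bundle is a subbundle, and its fibre dimension $n-1$ is read off from the chart computation.
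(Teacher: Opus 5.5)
Your proposal is correct and follows the paper's proof. The cotangent line, as a one-dimensional $G$-quotient of the linear forms, rules out nonabelian $G$; in the cyclic case, uniqueness of the expansion of $y$ in the basis $1,x,\ldots,x^{nr-1}$ forces the colour congruence, and the linear coefficient $c_1$ decides whether the two charts meet. Your fixed-locus argument for $a=1$ is valid but not needed: each chart is already $\mathbb{A}^1\times\mathbb{A}^{n-1}$, lying over a standard affine of $\mathbb{P}^1$ via the slope $c_1$, which is exactly the paper's one-line remark.
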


\begin{proof}
A faithful finite nonabelian subgroup has irreducible representation $R_1$: otherwise complete reducibility diagonalizes it.
A curvilinear quotient would give a one-dimensional quotient of $R_1$,
namely its cotangent space, a contradiction.

For the cyclic group, uniqueness of the expression for $y$ in the basis $1,x,\ldots,x^{nr-1}$ imposes exactly the displayed character congruence.
There are $n$ allowed exponents, and the basis contains every character $n$ times.
In the $y$-chart the allowed exponents satisfy $aj\equiv1\pmod r$.
When $a\ne1$, neither expression has a linear term, so the other coordinate cannot generate; the charts are disjoint.
For $a=1$,
the allowable exponents are $1,r+1,\ldots,(n-1)r+1$, and the first coefficient is the tangent slope.
\end{proof}

These charts are actual free-orbit limits.
If $h(x)$ is the polynomial in \eqref{qfig:eq-coloured-curvilinear} and $b_1,\ldots,b_n$ are distinct nonzero constants,
then
\begin{equation*}
 \left(y-h(x),\ \prod_{j=1}^n(x^r-tb_j)\right)
\end{equation*}
is flat over $\CC[t]$ with basis $1,x,\ldots,x^{nr-1}$.
Its nonzero fibres are $n$ distinct free orbits; its zero fibre is the prescribed curvilinear quotient.

Nevertheless curvilinear quotients need not be dense equivariantly.

\begin{prop}
\label{qfig:prop-nondensity}
For scalar $\mu_r$, $r\geq2$, the balanced ideal $J=(x^2,y^r)$ lies outside the closure of the equivariant curvilinear locus in $P_2(\mu_r)$,
but is a flat limit of two free orbits.
\end{prop}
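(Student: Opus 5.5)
The statement has two parts, and I would treat them separately. The realization as a limit of free orbits has an explicit construction. The non-density is the real content.

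\emph{Flat limit.} Consider the family
\[ \bigl(x^2-t^2y^2,\ y^r-t^r\bigr)\subset \CC[t][x,y]. \]
Both generators are $\mu_r$-invariant: for the scalar action a homogeneous polynomial of degree $d$ has character $\zeta^d$, so $x^2-t^2y^2$ and $y^r-t^r$ generate an invariant ideal. For each $t$ this is a complete intersection of a quadric and a degree-$r$ curve, of colength $2r$. Hence the quotient is flat over $\CC[t]$ with the monomial basis $x^iy^j$, $i<2$, $j<r$.

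For $t\neq0$ the zero set is the $2r$ reduced points $(\pm t\zeta,t\zeta)$ with $\zeta^r=1$. These are exactly the two free orbits through $(t,t)$ and $(-t,t)$, and they are distinct. The fibre at $t=0$ is $J=(x^2,y^r)$.

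Balance of $J$ is immediate from this basis. The monomial $x^iy^j$ has colour $i+j \bmod r$, and as $i$ ranges over $\{0,1\}$ and $j$ over $\{0,\ldots,r-1\}$ each colour appears exactly twice. So $J\in P_2(\mu_r)$.

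\emph{Non-density.} By Proposition~\ref{qfig:prop-equivariant-curvilinear} with $a=1$, $n=2$, the equivariant curvilinear locus is covered by two charts:
\[ f=y-cx-dx^{r+1}\ \text{modulo}\ x^{2r}, \qquad g=x-c'y-d'y^{r+1}\ \text{modulo}\ y^{2r}, \]
glued by $c'=c^{-1}$ on the tangent slope. I would use the valuative criterion. Suppose $J$ is a limit. Then there is a curve $s\mapsto I_s$ in the curvilinear locus, defined over a punctured disc, whose flat limit is $J$. Every element of $I_s$ with coefficients in $\CC((s))$, rescaled by its minimal valuation, has a nonzero limit, and that limit lies in $J$. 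Shrinking the disc, I may assume the curve stays in one chart. I would then split into cases according to the valuations of $c$ and $d$ (respectively $c'$ and $d'$). In each case the goal is an element of $I_s$ whose normalized limit has a nonzero coefficient on a monomial in the basis $\{x^iy^j: i<2,\ j<r\}$ of $R/J$. Such a limit cannot lie in $J$.

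The cases I expect to go as follows.
\begin{itemize}
\item If $c$ and $d$ (or $c'$ and $d'$) stay bounded, the limit contains an element with a nonzero linear term. This is impossible in $J\subset\mathfrak m^2$.
\item In the $y$-chart with $c'$ bounded, $I_s$ contains $y^{r-1}g=xy^{r-1}-c'y^r$, because $y^{2r}\in I_s$. Its limit $xy^{r-1}-c'_0y^r$ is not in $J$, since $y^r\in J$ but $xy^{r-1}\notin J$. This settles the $y$-chart regardless of $d'$.
\item The symmetric computation in the $x$-chart gives $x^{r-1}f=x^{r-1}y-cx^r$. This is not sufficient when $r\geq3$, because then $x^{r-1}y\in J$. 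There the branch with $c$ bounded and $d\to\infty$ needs a different element. I would combine $y^2-cxy-dx^{r+1}y$ with the relations $x^ry\equiv cx^{r+1}$ and $x^{r+1}y\equiv cx^{r+2}$ coming from $x^rf$ and $x^{r+1}f$. The aim is a combination whose coefficient on $y^2$ survives normalization, or which has a nonzero $y$-free part in degrees below $2$. The case $c\to\infty$ should reduce to the $y$-chart via $c'=c^{-1}$.
\end{itemize}

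I expect this last branch, the $x$-chart with $d\to\infty$, to be the main obstacle. If the element-by-element approach stalls there, my fallback is a global argument. Take the closure $Z$ of the curvilinear locus, which is irreducible of dimension $2$ and stable under the two-dimensional torus. Compute its torus-fixed points directly as the limits along torus orbits of the charts, and show that $J$ is not among them. Since $J$ is torus-fixed, $J\in Z$ would force it to be one of these fixed points.
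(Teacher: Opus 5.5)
Your flat-limit part is fine, apart from one slip. For $t\neq0$ the zeros of $(x^2-t^2y^2,\,y^r-t^r)$ are $(\pm t^2\zeta,\,t\zeta)$, not $(\pm t\zeta,\,t\zeta)$. They still form the two distinct free orbits through $(\pm t^2,t)$, so the conclusion stands.

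The genuine gap is in the non-density part. Your reductions work as follows. If $c$ is unbounded, the point lies in the $y$-chart with $c'=c^{-1}$ bounded, which you settle with $y^{r-1}g$. If $c'$ is unbounded, it lies in the $x$-chart with $c$ bounded. Bounded $(c,d)$ is excluded by the linear term. So everything hinges on the $x$-chart with $c$ bounded and $d$ unbounded. For $r\geq3$ you leave exactly this case open.

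The combination you propose with $y^2-cxy-dx^{r+1}y$ is never carried out. The torus fallback is also only a plan. You would still need to justify that every torus-fixed point of the closure is a one-parameter limit of chart points; this is true here because $\{cd\neq0\}$ is a single dense torus orbit, but you do not establish it. You would then have to compute all those limits. As written, the argument proves the proposition only for $r=2$.

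The missing idea is the one the paper's proof rests on. In the $x$-chart the linear form $\ell=y-cx$ has zero cotangent image, and $\ell\equiv dx^{r+1}$ modulo $I_s$. Hence $\ell$ kills $\mathfrak m^{r-1}$ in $R/I_s$. Concretely,
\[ (y-cx)\,xy^{r-2}\equiv d\,x^{r+2}y^{r-2}\equiv d\,x^{r+2}(cx+dx^{r+1})^{r-2}\equiv0 \pmod{I_s}, \]
so $xy^{r-1}-c\,x^2y^{r-2}\in I_s$. This element does not involve $d$, and its coefficient on $xy^{r-1}$ is $1$. With $c$ bounded, its limit $xy^{r-1}-c_0x^2y^{r-2}$ would have to lie in $J$. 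That is impossible, since $x^2y^{r-2}\in J$ but $xy^{r-1}\notin J$. This disposes of the whole $x$-chart with $c$ bounded, with no case split on $d$, and it completes your valuative argument.

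The paper packages the same observation without arcs. Every equivariant curvilinear ideal satisfies $\ell R_{r-1}\subset I$ for some $[\ell]\in\mathbb P(R_1)$. This incidence is closed in $P_2(\mu_r)\times\mathbb P(R_1)$. Its proper projection therefore has closed image, and that image contains the curvilinear locus. $J$ is excluded because, for $\ell=\alpha x+\beta y$, one has $\ell y^{r-1}\equiv\alpha\,xy^{r-1}$ and $\ell\,xy^{r-2}\equiv\beta\,xy^{r-1}$ modulo $J$, forcing $\alpha=\beta=0$.
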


\begin{proof}
The basis
\[ 1,y,\ldots,y^{r-1},\quad x,xy,\ldots,xy^{r-1} \]
contains every scalar character twice.
In an equivariant curvilinear algebra of length $2r$, choose a linear generator $z$ and an independent linear form $\ell$ whose cotangent image vanishes.
The scalar character forces $\ell=c z^{r+1}$, whence $\ell R_{r-1}\subset I$.
The incidence
\[ \{(I,[\ell])\in P_2(\mu_r)\times\mathbb P(R_1):
                       \ell R_{r-1}\subset I\} \]
is closed.
Its projection is proper, so its image is closed and contains all the curvilinear quotients.

For $J$, writing $\ell=\alpha x+\beta y$, multiplication by $y^{r-1}$ and $xy^{r-2}$ gives respectively $\alpha xy^{r-1}$ and $\beta xy^{r-1}$.
The surviving basis vector $xy^{r-1}$ forces $\alpha=\beta=0$.
Thus $J$ is outside that closed image.
On the other hand,
\[ (x^2-ty^2,\ y^r-t) \]
has the same free basis over $\CC[t]$.
For $t\ne0$ its points satisfy $y^r=t$, $x=\pm\sqrt t\,y$, and form two free scalar orbits.
Its zero fibre is $J$.
\end{proof}

The ordinary punctual smoothing $(x^2-ty,y^r)$ also has this free basis and has curvilinear nonzero fibres.
It is not equivariant:
$x^2$ and $y$ have different scalar characters.
The preceding incidence argument excludes all equivariant curvilinear replacements,
not just this particular family.
The example includes the crepant case $r=2$.

\subsection{The complete scalar fibre in regular length two}

Let $R_j$ be the binary forms of degree $j$.
On $\mathbb P(R_2)\simeq\mathbb P^2$, define the rank-two bundle
\begin{equation*}
 0\longrightarrow R_{r-2}\otimes\mcO(-1) \xrightarrow{\ \cdot f\ }R_r\otimes\mcO \longrightarrow\mathcal E_r\longrightarrow0.
\end{equation*}
Multiplication by a nonzero quadratic is injective; the rank difference is two.
Put $D_r=\mathbb P_{\mathrm{quot}}(\mathcal E_r)$, the bundle of one-dimensional quotients.

\begin{theorem}
\label{qfig:thm-core-two}
For scalar $\mu_r$, $r\geq3$, the reduced punctual fibre contains the closed subvariety $D_r$, and
\begin{equation}
\label{qfig:eq-core-two}
P_2(\mu_r)_{\mathrm{red}}\setminus D_r \simeq \operatorname{Tot}\mcO_{\mathbb P^1}(r+2).
\end{equation}
It has an affine paving with dimensions
\[ 0,\quad1,1,1,\quad2,2,2,\quad3.
\]
Consequently
\begin{align}
 [P_2(\mu_r)_{\mathrm{red}}] &=1+3\Lef+3\Lef^2+\Lef^3,
\label{qfig:eq-motive-two}
\\ P(\Hilb^2[\CC^2/\mu_r];q) &=1+3q^2+3q^4+q^6,
\label{qfig:eq-B-two}
\\ K_2(q)&=q^2+3q^4+q^6.
\label{qfig:eq-K-two}
\end{align}
\end{theorem}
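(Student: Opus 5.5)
The approach is to classify the balanced punctual ideals of length two by the Hilbert function of their associated graded algebra. For scalar $\mu_r$ the character of $x^iy^j$ is $i+j \bmod r$, so a punctual quotient $Q=R/I$ with $\dim Q=2r$ has each character exactly twice. Write $h_d$ for the Hilbert function of $\operatorname{gr}Q$. The argument then reduces to two steps: a dichotomy on $h_1$, and an identification of each resulting stratum.

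\emph{Step 1: the dichotomy.} Since $r\ge3$, degree $0$ contributes one copy of the trivial character, and the second copy must sit in some degree $\equiv0\pmod r$. If $h_1=1$, the Hilbert function of a two-variable local algebra is nonincreasing once $h_d\le d$, so $Q$ is curvilinear. Suppose instead $h_1=2$. Then both copies of character $1$ are used in degree $1$, which forces $h_{r+1}=0$, hence $I\supseteq\mathfrak m^{r+1}$. The character count then forces
\[ h=(1,2,2,\ldots,2,1) \]
in degrees $0,\ldots,r$.

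\emph{Step 2: the two strata.} In the non-curvilinear case I first show that $I$ is homogeneous. The scalar action identifies characters with degrees mod $r$, so an equivariant correction to a homogeneous element of degree $d\ge1$ lives in degrees $\ge d+r\ge r+1$, which are already zero in $Q$. With $I$ homogeneous, $I_2=\langle f\rangle$ for a nonzero quadric $f$. A dimension count gives $I_d=f\cdot R_{d-2}$ for $2\le d\le r-1$. Finally $I_r$ is a hyperplane of $R_r$ containing $fR_{r-2}$, that is, a point of $\mathbb P_{\mathrm{quot}}(\mathcal E_r)$. Closure under multiplication is automatic because $\mathfrak m^{r+1}\subset I$. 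Conversely every point of $D_r$ gives such an ideal. This identifies the non-curvilinear locus with $D_r$, which is a $\mathbb P^1$-bundle over $\mathbb P^2$. Since the curvilinear locus is open (the cotangent dimension is upper semicontinuous), $D_r$ is closed.

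For the curvilinear stratum, Proposition \ref{qfig:prop-equivariant-curvilinear} with $a=1$, $n=2$ gives the charts
\[ (x^{2r},\ y-c_1x-c_{r+1}x^{r+1}) \]
and their $y$-analogues, forming an $\mathbb A^1$-fibration over $\mathbb P^1$ with slope coordinate $c_1$. I will compute the transition explicitly. Inverting $y=c_1x+c_{r+1}x^{r+1}$ modulo $x^{2r}$ gives $x=c_1^{-1}y-c_1^{-r-2}c_{r+1}y^{r+1}+\cdots$, so $c'_{r+1}=-c_1^{-(r+2)}c_{r+1}$. This exhibits $\operatorname{Tot}\mcO_{\mathbb P^1}(r+2)$, the sign and convention to be matched with the $\mcO(-r)$ computation of the length-one case.

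\emph{Step 3: paving and Betti numbers.} $D_r$ is paved by $[\mathbb P^2][\mathbb P^1]=1+2\Lef+2\Lef^2+\Lef^3$. The complement $\operatorname{Tot}\mcO(r+2)$ over $\mathbb P^1$ is paved by cells of dimensions $1,2$, contributing $\Lef+\Lef^2$. These cells are added in the order of a closed filtration, giving the dimensions $0;1,1,1;2,2,2;3$ and \eqref{qfig:eq-motive-two}. Since $P_2$ is proper with an affine paving, its cohomology is even and Tate with $P(P_2;q)=1+3q^2+3q^4+q^6$. Lemma \ref{lem:core} transfers this to $\Hilb^2[\CC^2/\mu_r]$, giving \eqref{qfig:eq-B-two}. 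Corollary \ref{cor:reconstruction} then gives
\[ P(P_2)=K_2+K_1+(q^2+1)K_0. \]
Using $K_1=q^2$ from \eqref{qfig:eq-length-one-K} and $K_0=1$, this yields $K_2=q^2+3q^4+q^6$.

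\emph{Main obstacle.} I expect the main difficulty to be the homogeneity argument in Step 2 together with the reducedness bookkeeping. One has to check carefully that no non-homogeneous equivariant ideal with $h_1=2$ escapes the classification. One also has to verify that the set-theoretic decomposition gives a genuine locally closed stratification of $P_2(\mu_r)_{\mathrm{red}}$ for which the paving argument applies. If the direct degree argument proves delicate, I would instead use the commuting torus $\mathbb G_m$-action: it contracts the fibre onto homogeneous ideals, and the weight bound above would then be applied cell by cell.
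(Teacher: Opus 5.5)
Your proposal is correct and follows essentially the same route as the paper. The shared steps are the embedding-dimension dichotomy, the forced Hilbert function $(1,2,\ldots,2,1)$ giving $\mathfrak m^{r+1}\subset I$ and homogeneity, and the identification of the non-curvilinear locus with $D_r$. They continue with the curvilinear charts glued by $t'=t^{-1}$, $\kappa'=-t^{-r-2}\kappa$, the paving, the core lemma, and the subtraction $K_2=B_2-F_1K_1-F_2K_0$.

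The two points you flag are settled in the paper as you would expect. On a reduced base, a universal multiplication map between vector bundles that vanishes on every geometric fibre vanishes identically, and this makes the identification with $D_r$ hold on the reduced fibre. The filtration is closed because $D_r$ together with one affine fibre of $\operatorname{Tot}\mcO_{\mathbb P^1}(r+2)$ has open complement $\mathbb A^2$.
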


\begin{proof}
Let $Q=R/I$ be punctual and balanced of length $2r$.
Its embedding dimension is one or two.
If it is two, the first graded piece of its maximal-ideal filtration has dimension two and scalar character one.
Balance forces the piece in degree $r+1$ to vanish, hence every higher piece vanishes.
Each residue $1,\ldots,r-1$ can now occur only in its own degree and has dimension two.
The constant term uses one trivial character, so degree $r$ has dimension one.
Its Hilbert function is therefore
\[ (1,\underbrace{2,\ldots,2}_{r-1\text{ entries}},1).
\]
In particular $\mathfrak m^{r+1}\subset I$.
Modulo this ideal, distinct positive degrees below $r$ have distinct characters.
A relation of character zero has no constant term, because the quotient is supported at the origin.
Every relation is consequently homogeneous.

There is a unique quadratic line $\CC f\subset I_2$.
For $2\leq j<r$, its multiples $fR_{j-2}$ already have the required dimension $j-1$, so they are all relations in degree $j$.
The remaining datum is a line quotient $\phi:R_r/fR_{r-2}\twoheadrightarrow L$.
It defines the ideal
\begin{equation*}
 I_{f,\phi}=(f,\ \widetilde{\ker\phi}\subset R_r,\ \mathfrak m^{r+1}).
\end{equation*}
All graded pieces of its universal quotient are vector bundles of the indicated ranks.
This proves flatness over $D_r$.
Conversely the degree-two relation and the degree-$r$ quotient recover the parameters functorially on reduced bases: a universal multiplication map between vector bundles that vanishes on every geometric fibre vanishes on such a base.
Thus this locus is $D_r$.
It is closed: it is the locus of embedding dimension two, the complement of the open curvilinear locus,
and the displayed construction identifies its reduced structure with the projective bundle.

For embedding dimension one, a tangent chart has
\begin{equation}
\label{qfig:eq-two-curvilinear}
(y-tx-\kappa x^{r+1},\ x^{2r}).
\end{equation}
Changing to the other tangent chart gives
\[ t'=t^{-1},\qquad \kappa'=-t^{-r-2}\kappa.
\]
After a constant sign change on one chart this is the transition function of $\mcO(r+2)$, proving \eqref{qfig:eq-core-two}.
The closure of each affine fibre is also explicit.
Put $z=y-tx$.
The family
\begin{equation*}
 (z^2,\ x^{r-1}z,\ x^{r+1}-\eta z)
\end{equation*}
has free basis $1,x,\ldots,x^r,z,xz,\ldots,x^{r-2}z$.
For $\eta\ne0$ it is \eqref{qfig:eq-two-curvilinear} with $\kappa=\eta^{-1}$; its zero fibre belongs to $D_r$.
Hence each affine fibre attaches to a point of $D_r$.

The standard paving of $\mathbb P(R_2)$ has cells $\mathbb A^0,\mathbb A^1,\mathbb A^2$.
The bundle $\mathcal E_r$ is trivial on each: write the quadratic successively as $y^2$, $xy+c y^2$, or $x^2+bxy+c y^2$;
monic division gives two fixed remainder monomials in degree $r$.
The projective bundle therefore has cells of dimensions $0,1,1,2,2,3$.
Add to $D_r$ the affine fibre of \eqref{qfig:eq-core-two} over one point of $\mathbb P^1$.
This union is closed, since its complement is the restriction to the complementary affine line and is $\mathbb A^2$.
The two added cells have dimensions one and two.
This constructs the claimed affine paving.

The scissor relation gives \eqref{qfig:eq-motive-two}, also directly as $(1+\Lef)(1+\Lef+\Lef^2)+\Lef(1+\Lef)$.
Because $P_2$ is proper, the paving gives its ordinary Betti polynomial by replacing $\Lef$ with $q^2$.
The core isomorphism gives \eqref{qfig:eq-B-two}.
Finally the coefficients of the plane factor are $F_0=1$, $F_1=1$, $F_2=1+q^2$.
Using $K_0=1$ and \eqref{qfig:eq-length-one-K} in $B_G=F K_G$,
\[ K_2=B_2-F_1K_1-F_2K_0 =q^2+3q^4+q^6.
\]
\end{proof}

The distinction between \eqref{qfig:eq-B-two} and \eqref{qfig:eq-K-two} is essential.
The projective threefold $D_r$ is a family of local algebra structures, not a family of extra support points.
Its top class already forces a degree-six class in the ordinary cohomology.
The collision character is then obtained by the support decomposition, rather than by identifying all cohomology of $D_r$ or $P_2$ with an intersection image.

\subsection{Higher collisions: evaluation lattices and finite jets}
\label{qfij:section}

\begin{example}
    Unfortunately, two lower collisions does not determine the higher collision.
If $I,J\subset R$ are proper $\mathfrak m$-primary ideals, then
\begin{equation*}
 0\longrightarrow R/(I\cap J)\longrightarrow R/I\oplus R/J \longrightarrow R/(I+J)\longrightarrow0.
\end{equation*}
Since $I+J\subset\mathfrak m$, the length of the union is strictly smaller than the sum of the lengths.
Nor is $R/I\oplus R/J$ cyclic: its quotient by $\mathfrak m$ has dimension two.
A collision of prescribed total length must retain additional data.

For the following scalar constructions assume $r\geq3$.
Take the two axis clusters
\[ I_x=(y,x^r),\qquad I_y=(x,y^r).
\]
Their union $(xy,x^r,y^r)$ has length $2r-1$.
For a projective relative rate $[\alpha:\beta]$, consider
\begin{equation*}
 \begin{split}
 J_{\alpha,\beta}(\epsilon)= (&xy,\ \beta x^r+\alpha y^r-\alpha\beta\epsilon,\\ &x^{r+1}-\alpha\epsilon x,\ y^{r+1}-\beta\epsilon y).
 \end{split}
\end{equation*}
On the chart $\alpha=1$ the monic relations
\[ xy,\qquad y^r+\beta x^r-\beta\epsilon,\qquad x^{r+1}-\epsilon x \]
have standard monomials
\begin{equation*}
 1,x,\ldots,x^r,\quad y,\ldots,y^{r-1}.
\end{equation*}
Their overlapping reductions agree, so they give a free basis.
The other chart follows by interchanging $x,y$.
The parameter $\epsilon$ rescales between projective rate charts;
the central family itself is intrinsically parametrized by $\mathbb P^1$.

For $\alpha\beta\epsilon\ne0$, these are the two free orbits $(y,x^r-\alpha\epsilon)$ and $(x,y^r-\beta\epsilon)$.
Their central limit is
\begin{equation*}
 I_{[\alpha:\beta]}= (xy,\ \beta x^r+\alpha y^r,\ x^{r+1},y^{r+1}).
\end{equation*}
The last two generators are necessary at the projective endpoints.
All these algebras have regular length two.
They fit into
\[ 0\longrightarrow\CC\longrightarrow R/I_{[\alpha:\beta]} \longrightarrow R/(I_x\cap I_y)\longrightarrow0,
\]
where the kernel is an invariant socle line.
The projective rate specifies the surviving quotient of $\langle x^r,y^r\rangle$.
Thus fixing the two input clusters still leaves a projective line of possible outputs.\qed
\end{example}

Put $N=n|G|$, $\mcO=\CC[[s]]$, and $K=\CC((s))$.
A configuration of $n$ distinct free orbits over $K$, tending to the origin, has finite \'etale coordinate algebra $E$ of degree $N$.
Its collision algebra is obtained before specialization:
\[ L=\operatorname{im}(\mcO[x,y]\longrightarrow E).
\]
\begin{prop}[Flat collision algebra]
\label{qfij:flat}
The algebra $L$ is finite free of rank $N$.
It is the unique flat extension of the generic quotient, and $L/sL$ is a punctual cyclic quotient of charge $n\delta$.
Formation of $L$ commutes with a ramified splitting extension.
Every point of the principal punctual fibre is obtained in this way.
\end{prop}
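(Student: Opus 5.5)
The plan is to treat $L$ as an $\mcO$-submodule of the $K$-algebra $E$ and reduce everything to the valuative criterion for the proper Hilbert–Chow map.

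\textbf{Freeness and rank.} $L$ is an $\mcO$-subalgebra of $E$, and $E$ is torsion free. Since $\mcO$ is a discrete valuation ring, $L$ is free once it is finitely generated. Each coordinate of a point of the configuration tends to the origin, so $x$ and $y$ are integral over $\mcO$. Their images generate $L$ as an $\mcO$-algebra, so $L$ is a finite $\mcO$-module. The image of $K[x,y]$ is all of $E$, because $E$ is the coordinate algebra of the configuration. Hence $L\otimes_{\mcO}K=E$ and $L$ has rank $N$.

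\textbf{Uniqueness.} A flat extension $\mcO[x,y]\twoheadrightarrow L'$ of the generic quotient is torsion free. It therefore embeds in $L'\otimes K=E$, and its image is exactly the image of $\mcO[x,y]$, namely $L$. Equivalently, $\ker(\mcO[x,y]\to E)$ is the unique saturated ideal extending the generic ideal. This is the valuative-criterion argument for the separated Hilbert scheme.

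\textbf{The special fibre.} $G$ acts on $E$ compatibly, so $L$ is a $G$-stable $\mcO$-lattice. Its character equals the generic character $n\,\CC[G]$, because characters of free $\mcO[G]$-lattices are constant: reduce mod $s$ and use semisimplicity, or use Proposition~\ref{prop:representability}. Hence $L/sL$ has charge $n\delta$. It is cyclic, being the quotient of $\CC[x,y]$ by the specialized ideal. It is supported at the origin because $x$ and $y$ are topologically nilpotent on $L$.

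\textbf{Base change.} Let $\mcO'=\CC[[s^{1/e}]]$ be a splitting extension. The image of $\mcO'[x,y]$ in $E\otimes_K K'$ is $L\otimes_{\mcO}\mcO'$, since $\mcO'$ is flat over $\mcO$ and images commute with flat base change. So the formation of $L$ commutes with the extension.

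\textbf{Surjectivity onto the principal fibre.} This is the main obstacle. Take a point $I$ of the principal punctual fibre $P_n$, where "principal" means lying in the closure of the free-orbit locus; Theorem~\ref{thm:global-hilbert-smooth} and connectedness give this. The open locus $W$ of $n$ distinct free orbits is dense in the irreducible component through $I$. Choose a curve germ through $I$ meeting $W$ generically, normalize it, and parametrize it by $\operatorname{Spec}\CC[[u]]$. Pulling back the universal quotient gives a flat family whose generic fibre is a configuration of free orbits, whose generic support point tends to the origin, and whose special fibre is $I$. By uniqueness this family equals the collision algebra $L$ of its generic fibre.

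Two points need care. The first is that the generic configuration is only defined over a finite extension of $\CC((u))$; this is handled by the base-change statement above. The second is the use of properness of $\pi_n$ and density. Density is immediate when the component is the one containing $W$. Otherwise one must invoke the definition of principal as closure of $W$; I expect the intended meaning is exactly this closure, so the curve-selection lemma applies directly.
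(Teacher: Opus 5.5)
Your proposal is correct and follows essentially the same route as the paper: integrality of $x,y$ gives a finite torsion-free (hence free) lattice of rank $N$, uniqueness comes from saturation of the generic ideal in $\mcO[x,y]$, constancy of the isotypic multiplicities and nilpotence of the coordinate operators identify the special fibre, flat base change handles the splitting extension, and curve selection through the closure of the free-orbit locus gives surjectivity. Your appeal to connectedness is not needed; the paper likewise just takes the principal fibre to be the part of $P_n$ lying in the closure of the free-orbit locus.
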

\begin{proof}
The characteristic polynomials of $x$ and $y$ in $E$ have integral coefficients: they are symmetric functions of the coordinates of the orbit configuration.
Thus $x,y$ are integral, and the monomials $x^ay^b$, $0\le a,b<N$, generate a finite module containing $L$.
More directly their span is $L$, by the two characteristic equations.
It is torsion-free in $E$, hence free of rank $N$ over the DVR.
At the central point both characteristic polynomials are $T^N$;
the commuting coordinate operators are nilpotent.
Their cyclic quotient is therefore punctual.
Isotypic projectors split the free family, so each multiplicity is constant.

A finite flat quotient with generic ideal $I_K$ has ideal $I_K\cap\mcO[x,y]$, because its quotient is torsion-free.
This proves uniqueness.
Flat base change commutes with the image sequence.
If $s=z^\ell$ splits the points, then
\[ L\otimes_{\mcO}\CC[[z]] =\operatorname{im}(\CC[[z]][x,y]\ra\CC((z))^N) \subset\CC[[z]]^N,
\]
whose fibre modulo $z$ is canonically $L/sL$.
Finally choose a curve through a point of the closure of the free-orbit locus, with generic point in that locus; normalization and completion give the required arc.
\end{proof}

Consider the principal component of $\Hilb^n[\CC^2/G]$.
List the split points as $q_a(z)$ and form the finite matrix
\[ \mathsf E(z)_{a,f}=f(q_a(z)),\qquad f\in\{x^iy^j:i+j<N\}.
\]
Let $c$ be the minimum finite valuation of its $N$-column minors,
and let $C$ be a submatrix attaining that minimum.
\begin{lem}[Finite reconstruction]
\label{qfij:minors}
The columns of $C$ are an integral basis of the evaluation lattice.
In this basis its coordinate operators and unit are
\begin{equation}
\label{qfij:matrices}
B_x=C^{-1}\operatorname{diag}(x(q_a))C,\qquad B_y=C^{-1}\operatorname{diag}(y(q_a))C,\qquad v=C^{-1}\mathbf1.
\end{equation}
Their reductions modulo $z^k$ depend only on the point coordinates modulo $z^{c+k}$.
In particular $k=1$ recovers the limiting algebra.
\end{lem}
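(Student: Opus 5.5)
The plan is to work entirely with the evaluation map $\operatorname{ev}:\CC[[z]][x,y]\to\CC[[z]]^N$, $f\mapsto(f(q_a(z)))_a$, whose image is the lattice $L_z=L\otimes_{\mcO}\CC[[z]]$ of Proposition \ref{qfij:flat}. First I would show that $L_z$ is already spanned by the images of the monomials $x^iy^j$ with $i+j<N$. Modulo $s$, the algebra $L/sL$ is a punctual algebra of length $N$, so $\mathfrak m^N\subset I_{L/sL}$. By Nakayama over $\CC[[z]]$, the images of monomials of total degree $<N$ then span $L_z$. Hence the $N\times\binom{N+1}{2}$ matrix $\mathsf E(z)$ has column span over $\CC[[z]]$ equal to $L_z$, which is free of rank $N$. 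Its $N$-minors generate the Fitting-type ideal $(z^{c_0})$ with $c_0=\operatorname{val}\det(\text{basis of }L_z)$.

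The main step is to check that the minimum $c$ is attained exactly by bases. Any $N$ columns of $\mathsf E$ lie in $L_z$, so such a minor equals $\det(\text{basis})\cdot\det(M)$ for an integral change-of-coordinates matrix $M$, and its valuation is at least $c_0$. Conversely, since the columns span $L_z$, some choice of $N$ columns reduces to a basis of $L_z/zL_z$, and then $\det M$ is a unit. So $c=c_0$, and a submatrix $C$ attains $c$ iff $\det M$ is a unit, i.e. iff its columns form an integral basis. With this, \eqref{qfij:matrices} is formal. The multiplication by $x$ on $\CC((z))^N$ is $\operatorname{diag}(x(q_a))$ and preserves $L_z$. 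Writing it in the basis $C$ gives $B_x=C^{-1}\operatorname{diag}(x(q_a))C$, which is integral. The unit $1\in L_z$ has coordinate vector $C^{-1}\mathbf 1$. Reducing modulo $z$ gives the operators and cyclic vector of $L_z/zL_z\cong L/sL$, by the base change clause of Proposition \ref{qfij:flat}; this is the case $k=1$.

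The step I expect to be the main obstacle is the jet dependence. Write $C^{-1}=z^{-c}\operatorname{adj}(C)/u$ with $u$ a unit. The adjugate is polynomial in the entries of $C$, and these entries are polynomials in the coordinates of the $q_a$. Consequently $B_x=z^{-c}u^{-1}\operatorname{adj}(C)\operatorname{diag}(x(q_a))C$ modulo $z^k$ needs the numerator modulo $z^{c+k}$, and $u^{-1}$ modulo $z^k$. The numerator is a polynomial in the point coordinates, so it is determined modulo $z^{c+k}$ by those coordinates modulo $z^{c+k}$. The same holds for $u$, since $z^cu=\det C$.

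One subtlety is that the selection of $C$ itself must be stable. For this I would argue that perturbing the coordinates modulo $z^{c+k}$, with $k\ge1$, does not change the valuation of $\det C$ or the fact that $c$ is the minimum. Any minor of valuation $<c+k$ keeps its valuation, and the minimum is attained at $c<c+k$. The same argument applies to $v$ and $B_y$.
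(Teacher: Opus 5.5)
Your proposal is correct and follows essentially the paper's route. You show that low-degree monomials span the special fibre and lift this by Nakayama, you obtain an integral basis from minimal-valuation minors, and you derive the jet bound from $\operatorname{val}\det C=c$. The differences are cosmetic substitutions: $\mathfrak m^N=0$ in the punctual fibre instead of the paper's degree-filtration growth, the determinant--index comparison instead of Cramer's rule, and the adjugate form of $C^{-1}$ instead of the identity $B'-B=C'^{-1}(E'_b-E_b-(C'-C)B)$; your explicit check that the chosen pivot $C$ stays minimal after perturbation supplies a point the paper leaves implicit.
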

\begin{proof}
In an $N$-dimensional algebra generated by two elements, the degree filtration begins in dimension one.
Once it stops growing it is stable under both generators, hence is the whole algebra.
Therefore degrees below $N$ span the special fibre, and Nakayama's lemma shows that the displayed columns generate the lattice.
Cramer's rule and minimal valuation show that every column has integral coordinates in the selected basis.
The multiplication matrices preserve the lattice and are integral as well.

If the coordinates change by $z^{c+k}$, both the basis and border evaluation matrices change by that order.
In $B'-B=\frac{1}{C'}(E'_b-E_b-(C'-C)B)$ the parentheses have order at least $c+k$, and the inverse has entries of order at least $-c$.
The difference thus has order at least $k$.
This also applies to the unit.
The monodromy permutes all evaluation rows; these permutations cancel in \eqref{qfij:matrices}, proving descent.
\end{proof}
On a fixed pivot and contact stratum these are regular formulas.
Reparametrizing an arc can increase $c$ arbitrarily, so this is a finite-jet description at bounded contact,
not a uniform jet bound depending only on $n$.
Conditions on the Hilbert function, socle,
or ranks of products of $B_x,B_y$ are constructible conditions on these finite jets.
Thus the retained multiplication explains which additional directions, beyond the limiting support, survive a collision.

\subsection{Motivic integration and recovery of the Betti series}
\label{subsec:collision-integration}
Set $W=(\CC^2)^n$, $H=G^n\rtimes S_n$, $d=2n$, and $X_n=W/H$.
We compare the two proper birational maps
\[ \pi:\Hilb^n[\CC^2/G]\longrightarrow X_n,
 \qquad \rho:\mathcal S_n=[W/H]\longrightarrow X_n.
\]
Both are isomorphisms over distinct free-orbit configurations.
A twisted arc in the sector of $h\in H$ of order $\ell$ is represented by $\mathbf p(\zeta z)=h\mathbf p(z)$,
with $s=z^\ell$, modulo $C_H(h)$.
If the eigenvalues of $h$ are $\zeta^{b_i}$, $0\le b_i<\ell$, its coordinates are $p_i(z)=z^{b_i}a_i(s)$.
Use the shift convention
\[ \operatorname{sht}(h)=\sum_{b_i>0}(1-b_i/\ell) =\operatorname{age}(h^{-1}).
\]
In Yasuda's completed coefficient theory, the measure of an $m$-jet cylinder on a smooth $d$-dimensional stack is $[C]\LL^{-md}$.
Thus arcs on a smooth scheme centered on $T$ have measure $[T]$.
A quotient jet stratum has class $[C/C_H(h)]$; its rational Hodge realization is that of the coarse quotient,
not $[C]/|C_H(h)|$.

Let $\mathcal A_n$ be the twisted arcs in $\mathcal S_n$ whose coarse arcs are centered at $n[0]$ and generically over the free-configuration locus.
Properness gives a unique lift $\widetilde\gamma$ to the Hilbert space, and the evaluation lattice gives its center $\eta(\alpha)$.
Define
\[ a=\operatorname{ord}_s\operatorname{Jac}_\pi(\widetilde\gamma),
 \qquad b=\operatorname{ord}_z\operatorname{Jac}_\rho(\mathbf p),
 \qquad \nu=\operatorname{sht}(h)+a-b/\ell,
 \qquad \operatorname{Jac}_f=\operatorname{Fitt}_0\Omega_f.
\]
\begin{theorem}[Refined collision integral]
\label{qfij:main}
For every constructible subset $T\subset P_n$,
\begin{equation}
\label{qfij:main-equation}
[T]=\int_{\eta^{-1}(T)}\LL^{\nu(\alpha)}\,d\mu_{\mathcal S_n}(\alpha).
\end{equation}
The identity holds in the completed motivic coefficient theory and after compactly supported Hodge realization.
The integrand and the limiting-algebra condition are finite-jet conditions on every stratum of bounded pivot contact and Jacobian orders.
\end{theorem}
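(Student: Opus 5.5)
The plan is to derive \eqref{qfij:main-equation} from the change-of-variables formula in Yasuda's motivic integration for Deligne--Mumford stacks, applied twice and compared through the common coarse base $X_n$. First I apply the ordinary (Denef--Loeser) change of variables to the proper birational map $\pi:\Hilb^n[\CC^2/G]\ra X_n$, restricted to the principal component, which is smooth of dimension $d=2n$ by Lemma \ref{lem:smooth-dimension}. Arcs on the smooth scheme $\Hilb^n[\CC^2/G]$ centered on $T\subset P_n$ have measure $[T]$, as recorded before the statement. Since $\pi$ is an isomorphism over the free-configuration locus, arcs not contained in the discriminant correspond bijectively to coarse arcs $\gamma$ on $X_n$, and
\[ [T]=\int_{\pi_\infty^{-1}(T)}1\,d\mu=\int_{\{\gamma:\ \widetilde\gamma(0)\in T\}}\LL^{-\operatorname{ord}\operatorname{Jac}_\pi(\widetilde\gamma)}\,d\mu_{X_n}, \]
where the right side uses the (Gorenstein or Mather-type) measure on the singular $X_n$ that makes both change-of-variables formulas valid. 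By Proposition \ref{qfij:flat} and the valuative criterion, the center $\widetilde\gamma(0)$ equals $\eta(\alpha)$, the special fibre of the flat collision algebra. The arcs in question are exactly coarse arcs centered at $n[0]$, and the discriminant has measure zero.

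Second, I apply Yasuda's twisted change of variables to $\rho:\mathcal S_n=[W/H]\ra X_n$. Each coarse arc generically in the free locus lifts to a twisted arc in a unique sector $h$, up to $C_H(h)$-conjugacy, and the formula reads
\[ \int_{X_n}\LL^{-\operatorname{ord}\,\cdot}\,d\mu_{X_n}=\int_{\mathcal S_n}\LL^{\operatorname{sht}(h)-b/\ell}\,(\cdots)\,d\mu_{\mathcal S_n}. \]
The factor $1/\ell$ appears because $b$ is measured in $z$ while the base uses $s=z^\ell$. I will check the sign convention $\operatorname{sht}(h)=\operatorname{age}(h^{-1})$ against the linear model $p_i=z^{b_i}a_i(s)$, computing the untwisted-coordinate measure of the sector directly. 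Substituting the first computation gives the integrand $\LL^{\operatorname{sht}(h)+a-b/\ell}=\LL^{\nu}$. The sign of $a$ is fixed by the direction of the comparison: we integrate $1$ on the Hilbert side and pull back. The main bookkeeping is to make the two Jacobian conventions ($\operatorname{Fitt}_0\Omega$) and the normalization of $\mu_{X_n}$ cancel exactly. I would sidestep $\mu_{X_n}$ entirely by composing the two bijections of arc spaces (Hilbert arcs $\leftrightarrow$ coarse arcs $\leftrightarrow$ twisted arcs). This leaves a direct change of variables along the birational correspondence $\Hilb^n\dashrightarrow\mathcal S_n$, whose Jacobian order is $a-b/\ell$.

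Finiteness and realization come next. By Lemma \ref{qfij:minors}, on strata of bounded pivot contact $c$, both $\eta(\alpha)$ and the Jacobian orders $a,b$ depend only on jets of bounded order. Hence the integrand is constant on cylinders, and the integral is a convergent sum of classes $[C/C_H(h)]\LL^{-md+\nu}$ in the completed ring. Compactly supported Hodge realization commutes with such sums and with quotients by finite groups, giving the stated Hodge version. The main obstacle I expect is verifying Yasuda's change of variables for the non-representable, non-étale map $\rho$ composed with the singular $\pi$-comparison, in particular showing the twisted-arc parametrization is bijective up to measure zero. A subtler issue is that $\nu$ is well defined, that is, independent of the splitting extension; this should follow from the descent statement at the end of Lemma \ref{qfij:minors}.
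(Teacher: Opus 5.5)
Your route is the paper's. You apply Yasuda's change of variables (Denef--Loeser when the source is a scheme) once to $\pi$ and once to $\rho$, over the same set $A$ of generically free coarse arcs whose Hilbert lift is centered on $T$, and compare the two; finite jets come from Lemma~\ref{qfij:minors}.

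\textbf{The sign error.} Your first display is wrong as written, and the remark that ``the sign of $a$ is fixed by the direction'' is covering an actual error, not a convention. The formula for $\pi$ reads
\[
\int_A\Phi\,d\mu_{X_n}=\int_{\pi_\infty^{-1}(A)}(\Phi\circ\pi_\infty)\,\LL^{-\operatorname{ord}\operatorname{Jac}_\pi}\,d\mu .
\]
To obtain $[T]$ on the Hilbert side you must take $\Phi=\LL^{+\operatorname{ord}\operatorname{Jac}_\pi}$, i.e.\ $[T]=\int_A\LL^{a}\,d\mu_{X_n}$. Your display $[T]=\int\LL^{-a}\,d\mu_{X_n}$ is false. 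Feeding that integrand into the formula for $\rho$ would produce the exponent $\operatorname{sht}(h)-a-b/\ell$, not $\nu$. Choosing $\Phi=\LL^{\operatorname{ord}\operatorname{Jac}_\pi}$ explicitly, so that the $\pi$-Jacobian cancels, is exactly the paper's step, and it repairs the argument.

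\textbf{The measure on $X_n$.} This is not a free choice. It must be the ordinary arc measure in Yasuda's theorem \cite[Theorem 3.41]{Yasuda_2004c}, paired with $\operatorname{Jac}_f=\operatorname{Fitt}_0\Omega_f$. A Gorenstein measure would change the Jacobian terms in both formulas and yield an exponent different from the $\nu$ of the statement.

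\textbf{The proposed sidestep.} Avoiding $\mu_{X_n}$ is not an independent argument. There is no change-of-variables theorem for the correspondence $\Hilb^n[\CC^2/G]\dashrightarrow\mathcal S_n$ on its own. The relative exponent $a-b/\ell$ (plus shift) is only justified by passing through $\mu_{X_n}$, so keep the two-step comparison.

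\textbf{The obstacle you flag.} Non-representability of $\rho$ is precisely what Yasuda's theorem for proper tame birational maps from smooth DM stacks covers, so nothing extra needs verifying there.

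\textbf{The finite-jet clause.} Lemma~\ref{qfij:minors} controls $\eta$ and the Hilbert lift. To make $a$ a finite-jet condition you also need an explicit determinantal description of $\operatorname{Jac}_\pi$. The paper takes invariant power sums as support coordinates, pulled back as $|G|^{-1}\operatorname{Tr}f(X,Y)$ on the ambient basis chart, so that $a$ and $b$ are orders of maximal minors. It then obtains the truncation bound $m\ell\ge\max\{c+\ell(a+1),b+1\}$.

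\textbf{Integrality of $\nu$.} The paper checks that $\nu\in\ZZ$: quotient Jacobian minors transform by $\det h^{-1}$, so $b\equiv-\sum_ib_i\pmod\ell$, which cancels the fractional part of $\operatorname{sht}(h)$. You do not address this; it keeps the integrand free of fractional powers of $\LL$.
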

\begin{proof}
The change-of-variables theorem for a proper tame birational map $f:\mathcal Y\ra\mathcal Z$, with smooth source and reduced variety target,
is \cite[Theorem 3.41]{Yasuda_2004c}
\[ \int_A\Phi\,d\mu_{\mathcal Z} =\int_{f_\infty^{-1}(A)}(\Phi\circ f_\infty) \LL^{-\operatorname{ord}\operatorname{Jac}_f+ \operatorname{sht}_{\mathcal Y}}d\mu_{\mathcal Y}.
\]
Take $A$ to be the coarse arcs whose Hilbert lift is centered on $T$ and generically free, and take $\Phi=\LL^{\operatorname{ord}\operatorname{Jac}_\pi}$.
For $f=\pi$, its Jacobian factor cancels $\Phi$, giving $[T]$;
arcs lying entirely in the proper closed boundary have measure zero.
For $f=\rho$, the same integral becomes the right side of \eqref{qfij:main-equation}.
The transformation theorem also gives convergence.

For explicit finite jets, use the basis chart $[X,Y]=0$, $b_i(X,Y)v=e_i$, with equivariance imposed, and choose local smooth coordinates on that chart.
Choose a finite generating collection of invariant power sums for $\CC[W]^H$ as support coordinates; their pullbacks are $|G|^{-1}\operatorname{Tr} f(X,Y)$.
The maximal minors of their differentials compute $\operatorname{Jac}_\pi$.
On $W$, the maximal minors of the differentials of the same invariant power sums compute $\operatorname{Jac}_\rho$.
Differentiation is taken in the ambient smooth chart before evaluation on the arc.
Orders $a,b$ are finite determinantal conditions.
Together with Lemma \ref{qfij:minors}, a truncation with $m\ell\ge\max\{c+\ell(a+1),b+1\}$ suffices.
Finally each quotient Jacobian minor transforms by $\det\frac{1}{h}$.
Its valuation is congruent to $-\sum_i b_i$ modulo $\ell$; hence $\nu$ is integral.
\end{proof}

In particular, we have the following evaluations:

\begin{corollary}
\label{qfij:betti}
Set $\mathcal Z_n=\int_{\mathcal A_n}\LL^\nu d\mu_{\mathcal S_n}$ and $\mathcal Z_0=1$.
Then
\begin{equation*}
 B_G(q,t)=\sum_{n\ge0}E_c(\mathcal Z_n;-q,-q)t^n,
 \qquad K_G(q,t)=F(q,t)^{-1} \sum_{n\ge0}E_c(\mathcal Z_n;-q,-q)t^n.
\end{equation*}
\end{corollary}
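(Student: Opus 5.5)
The plan is to specialize Theorem \ref{qfij:main} to the whole punctual fibre and then feed the result into the earlier Betti identities. First I take $T=P_n$ in \eqref{qfij:main-equation}. By Proposition \ref{qfij:flat}, every twisted arc of $\mathcal S_n$ whose coarse arc is centered at $n[0]$ and is generically free has a unique Hilbert lift. Its centre $\eta(\alpha)$ lies in $P_n$, so $\eta^{-1}(P_n)=\mathcal A_n$ up to arcs contained in the proper closed boundary, and those have measure zero. This gives $[P_n]=\mathcal Z_n$ in the completed motivic coefficient theory, and hence $E_c(P_n;u,v)=E_c(\mathcal Z_n;u,v)$ after the compactly supported Hodge realization, which the theorem explicitly allows. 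For $n=0$ the convention $\mathcal Z_0=1$ matches $\Hilb^0=\mathrm{pt}$.

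Next I convert this into ordinary Betti numbers. $P_n$ is proper, and by Lemma \ref{lem:core} its cohomology is identified with $H^*(\Hilb^n[\CC^2/G])$, with $H^d$ pure of weight $d$. By the remark in Section \ref{subsec:surface_series}, a proper space with pure cohomology satisfies $P(P_n;q)=E_c(P_n;-q,-q)$. The sign works out because $H_c^k=H^k$ has weight $k$, so each class contributes $(-1)^k(-q)^k=q^k$. Combining this with $B_G=\sum_n P(P_n;q)t^n$ from the display after Lemma \ref{lem:core} gives the first formula.

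The second formula then follows from Theorem \ref{thm:support}, $B_G=F\,K_G$. Since $F(q,t)$ has constant term $1$, it is invertible in $\QQ[q][[t]]$, so I can divide by it.

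I expect the main obstacle to be the first step, namely justifying that $\eta^{-1}(P_n)$ is exactly $\mathcal A_n$ up to measure zero. There are two points here. First, a constructible subset of $P_n$ that is not in the closure of the free-orbit locus would be missed by arcs, so I must check that $P_n$ lies in the principal component. Because $\Hilb^n[\CC^2/G]$ is smooth and irreducible of dimension $2n$ (Lemma \ref{lem:smooth-dimension} together with connectivity of the semiprojective space retracting to $P_n$), the free locus is dense, and Proposition \ref{qfij:flat} then produces arcs through every point. Second, the realization $E_c$ must commute with the completed integral, so that quotient strata contribute coarse classes. I would take this from the convergence statement in Theorem \ref{qfij:main}.
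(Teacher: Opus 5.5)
Your proposal is correct and follows the paper's own proof: take $T=P_n$ in Theorem \ref{qfij:main} to get $\mathcal Z_n=[P_n]$, use properness and purity from Lemma \ref{lem:core} to turn $E_c(P_n;-q,-q)$ into $P(P_n;q)=P(\Hilb^n[\CC^2/G];q)$, and then divide $B_G=FK_G$ by the invertible series $F$. The step you flag as the main obstacle is not actually needed: $\eta^{-1}(P_n)=\mathcal A_n$ holds by definition, because the lift of an arc centered at $n[0]$ has its center in $\pi_n^{-1}(n[0])=P_n$, and the principal-component issue is already an input of Theorem \ref{qfij:main}. That is fortunate, since the retraction onto $P_n$ only gives $\pi_0(\Hilb^n[\CC^2/G])=\pi_0(P_n)$ and would not by itself prove connectedness.
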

\begin{proof}
First identify the integral with $[P_n]$.
Proper-core purity makes $E_c(P_n;-q,-q)=P(P_n;q)$, and the core isomorphism gives $B_G$.
The support theorem then gives $K_G$.
Thus these additive data determine the ranks of the compact intersection maps.
Their actual pairings still require the normal embedding or supported Ext calculation of Section \ref{qfig:sec-intersection}.
\end{proof}

\section{Hall homology action on Hilbert cohomology}
\label{sec:hall-action}
We use the convention that $\QQ(1)$ has weight $-2$, and
for the mixed Hodge structures on Borel-Moore homology and their Tate normalizations,
see \cite[Sections 2.1.2 and 2.3.2]{DavisonPurity2CY}.
\subsection{Hall algebras and their Borel-Moore homology}
\label{subsec:hall-preliminaries}

Put $A=R\rtimes G$, the skew group algebra with multiplication $(rg)(sh)=r\,g(s)gh$.
A left $A$-module is equivalently an $R$-module with a compatible $G$-action.
Let $\mathscr C_G^0$ be the category of finite-dimensional left $A$-modules supported at the origin.
Its exact sequences are the ordinary short exact sequences of modules.
It is closed under subobjects, quotients, and extensions: if $\mathfrak m^aQ_1=0$ and $\mathfrak m^bQ_2=0$,
an extension of $Q_2$ by $Q_1$ is killed by $\mathfrak m^{a+b}$, where $\mathfrak m=(x,y)$.
The charge of an object is its dimension vector $\bfw$,
which is additive in exact sequences.

Write $V_{\bfw}=\bigoplus_i\rho_i^{w_i}$ and $GL_\bfw=\prod_iGL(w_i)$.
The relevant moduli stacks are
\[
\begin{aligned}
 \operatorname{Rep}_{\bfw}(A) &=\{(X,Y)\in\Hom_G(U\otimes V_{\bfw},V_{\bfw}):XY=YX\},\\ \mathfrak R_{\bfw} &=[\operatorname{Rep}_{\bfw}(A)/GL_\bfw].
 \end{aligned}
\]
In this notation $(X,Y)$ denotes a single equivariant multiplication map, written in the basis $x,y$ of $U$.
The quotient stack classifies finite locally free families of $A$-modules with the specified isotypic ranks: frames of their multiplicity bundles give its atlas,
and descent recovers the families.

For $N=|\bfw|>0$, let $Z^0_{\bfw}$ be the reduced closed locus on this atlas defined by
$ \mathrm{Tr}(X^aY^b)=0 \text{ for } 1\leq a+b\leq N,$ and further define the stack
 \[ \mathfrak T_{\bfw}=[Z^0_{\bfw}/GL_\bfw].\]
Its geometric modules are precisely those supported at the origin.
Indeed, Newton identities applied to $sX+tY$ give $(sX+tY)^N=0$; comparison of coefficients gives $(x,y)^NQ=0$.
The converse holds over fields.
Set $\mathfrak T_{\mathbf0}=\operatorname{Spec}\CC$.
These are Artin stacks, and every nonzero module has scalar automorphisms.
Specifying an invariant cyclic generator removes all automorphisms and gives the punctual Quot fibre $P_{\bfw}$.
The reduction convention does not change rational Borel-Moore homology.
It avoids identifying geometric punctuality with an assertion about all infinitesimal families.

\begin{defn}[Stack homology and Hall homology]
\label{def:hall-homology}
For a quotient stack $\mathfrak X=[Z/H]$, use the convention
\begin{equation*}
 H_j^{\mathrm{BM}}(\mathfrak X) =\mathbb H^{-j}(\mathfrak X,\omega_{\mathfrak X}).
\end{equation*}
Equivalently, choose an $l$-dimensional representation $W$ of $H$ with a free open subset $W^\circ$.
In the stable range,
as the codimension of its complement increases, this is
\[ H_{j+2l}^{\mathrm{BM}}((Z\times W^\circ)/H)(-l).
\]
The twist is part of the mixed-Hodge convention; it does not change the underlying rational vector space.
The \emph{punctual Hall homology} is the charge-graded vector space
\[ \mathcal H_G^0=\bigoplus_{\bfw,j} H_j^{\mathrm{BM}}(\mathfrak T_{\bfw}).
\]
Its Hall multiplication is defined below.
The superscript $0$ records support at the origin, not homological degree.
\end{defn}

For a smooth stack of complex dimension $d$,
$\omega_{\mathfrak X}=\QQ_{\mathfrak X}[2d](d)$, so that $H_j^{\mathrm{BM}}(\mathfrak X)=H^{2d-j}(\mathfrak X)(d)$.
Dimensions and homological degrees can be negative.
For example,
$H^*(B\mathbb G_m)=\QQ[c]$, $|c|=2$, whereas
\[ H_j^{\mathrm{BM}}(B\mathbb G_m)=
\begin{cases}
\QQ(-a-1),&j=-2-2a,\ a\geq0,\\ 0,&\text{otherwise}.
 \end{cases}
\]
The stacks $\mathfrak T_{\bfw}$ are generally singular, so their dualizing complexes cannot be replaced by shifted constant sheaves.
Every fixed-degree group is finite-dimensional and carries its natural mixed Hodge structure.

We use proper pushforward, smooth pullback, and refined Gysin maps with their usual base-change and composition identities.
For rational Chow groups of Deligne--Mumford stacks this is Vistoli's intersection-theoretic formalism \cite{Vistoli1989}.
The Hall stacks here have positive-dimensional stabilizers;
for their Borel-Moore homology we use the corresponding operations and virtual pullbacks of \cite[Section 3]{KapranovVasserot}.
In local zero-locus presentations the virtual operation is a smooth pullback followed by a refined Gysin map for a vector-bundle section.
Thus the correspondence arguments below reduce to compatibility of the extension complexes and these functorial identities.

To define multiplication, let $\mathfrak E^0_{\bfw_1,\bfw_2}$ be the reduced classical stack of short exact sequences $0\ra Q_1\ra Q\ra Q_2\ra0$ in $\mathscr C_G^0$,
with $Q_i$ of charge $\bfw_i$.
There is a correspondence
\begin{equation*}
 \mathfrak T_{\bfw_1}\times\mathfrak T_{\bfw_2} \xleftarrow{\ p\ }\mathfrak E^0_{\bfw_1,\bfw_2} \xrightarrow{\ m\ }\mathfrak T_{\bfw_1+\bfw_2},
 \qquad p=(Q_1,Q_2),\quad m=Q.
\end{equation*}
Thus the first factor is the submodule.
The map $m$ is representable and proper: over $Q$, it parametrizes invariant submodules of a specified charge,
a closed locus in a product of Grassmannians.
The map $p$ need not be smooth.
Its pullback requires the following extension complex, rather than an ordinary smooth pullback.

For fixed endpoints, the equivariant Koszul resolution gives
\[ C^p=\Hom_G(\Lambda^pU\otimes Q_2,Q_1),\qquad 0\leq p\leq2,
 \qquad H^p(C^\bullet)=\Ext_A^p(Q_2,Q_1).
\]
Writing $X_i,Y_i$ for multiplication on $Q_i$, the differentials are explicitly
\[
\begin{aligned}
 D_0(f)&=(X_1f-fX_2,\ Y_1f-fY_2),\\ D_1(\xi_x,\xi_y) &=X_1\xi_y+\xi_xY_2-Y_1\xi_x-\xi_yX_2.
 \end{aligned}
\]
Commutativity on the endpoints implies $D_1D_0=0$.
A $G$-equivariant splitting of the underlying vector spaces exists by averaging.
In such a splitting, the off-diagonal multiplication is $\xi\in C^1$, commutativity is $D_1\xi=0$, and a change of splitting adds $D_0f$.
Thus the classical fibre is $[\ker D_1/C^0]$.
The derived zero locus of $D_1$, divided by $C^0$, retains the obstruction space $\Ext_A^2(Q_2,Q_1)$.
It defines the virtual pullback $p^!$ of relative dimension
\[ d_p=\dim C^1-\dim C^0-\dim C^2 =-E(\bfw_2,\bfw_1).
\]
This is the surface extension construction of \cite[Sections 2-4]{KapranovVasserot}.
Taking $G$-invariants is exact, so the same complexes and their flag compatibilities apply here.
The punctual virtual operation is inherited by base change from the ambient extension construction.
Passing to reduced classical stacks makes the middle-term map factor through the reduced punctual target and does not change Borel-Moore homology.
No additional obstruction theory is assigned to the nilpotence equations.

\begin{defn}[Punctual Hall product]
\label{def:hall-product}
For $\alpha_i\in H_{j_i}^{\mathrm{BM}}(\mathfrak T_{\bfw_i})$,
set
\begin{equation*}
 \alpha_1\star\alpha_2=m_*p^!(\alpha_1\boxtimes\alpha_2).
\end{equation*}
It has homological degree $j_1+j_2-2E(\bfw_2,\bfw_1)$.
More precisely, multiplication is a mixed-Hodge morphism
\[ H_{j_1}^{\mathrm{BM}}(\mathfrak T_{\bfw_1})\otimes H_{j_2}^{\mathrm{BM}}(\mathfrak T_{\bfw_2}) \longrightarrow H_{j_1+j_2-2E_{21}}^{\mathrm{BM}} (\mathfrak T_{\bfw_1+\bfw_2})(E_{21}),
 \qquad E_{21}=E(\bfw_2,\bfw_1).
\]
We suppress this target Tate twist when speaking only of rational vector spaces.
The resulting algebra is denoted $\mathcal H_G^0$.
\end{defn}

Associativity follows by applying the same construction to a three-step filtration: both parenthesizations forget the intermediate submodules of the same flag.
The extension exact triangles identify the composite virtual pullbacks, and proper base change identifies the composite pushforwards.
This is the flag compatibility of \cite[Section 4.5]{KapranovVasserot}; see also \cite[Part II]{DiaconescuPortaSala}.
The class of the zero module is the unit.
Replacing $\mathfrak T_{\bfw}$ by $\mathfrak R_{\bfw}$ defines the Hall algebra $\mathcal H_G$ with arbitrary finite support.
Proper pushforward from the closed punctual loci gives an algebra homomorphism $\mathcal H_G^0\ra\mathcal H_G$,
not an asserted injection.

The algebra used for the regular Betti series is the subalgebra
\begin{equation*}
 \mathcal A_G^0=\bigoplus_{n,j}H_j^{\mathrm{BM}}(\mathfrak T_{n\delta}),
 \qquad \mathcal A_{n,j}=H_j^{\mathrm{BM}}(\mathfrak T_{n\delta}).
\end{equation*}
We also write $\mathcal A=\mathcal A_G^0$.
The identity $E(n\delta,m\delta)=0$ makes both $n$ and $j$ additive under multiplication, with no Tate correction.
In the centered grading $e=-j$, the same statement holds for $(n,e)$.
The charge-zero part is $\QQ$; projection onto it is the augmentation $\epsilon:\mathcal A\ra\QQ$, and $\mathcal A_+=\ker\epsilon=\bigoplus_{n>0,j}\mathcal A_{n,j}$.
The algebra is a direct sum.
Its charge completion is needed only when infinite formal sums are used; multiplication in a fixed charge involves finitely many charge splittings.

Here ``Hall homology'' means the Borel-Moore groups with this extension product.
It does not mean the homology of the algebra's bar complex.
The latter is a free resolution of a specified left module $L$; applying $\QQ\otimes_{\mathcal A}-$,
with $\QQ$ defined by $\epsilon$, computes $\operatorname{Tor}^{\mathcal A}_*(\QQ,L)$.
Likewise, the augmentation ideal $\mathcal A_+$ differs from the vacuum annihilator: the vacuum map takes values in the collision module,
and positive-charge operators can act nontrivially on the vacuum.
The annihilator is a left ideal, so its quotient is a cyclic module;
it is not in general a quotient algebra.
These distinctions will be used in Section \ref{qfih:section}.

\subsection{Punctual module stacks and their action}
\label{subsec:punctual-action}

The compact intersection images are compatible across charges with modification of ideals.
We now describe the action of the Hall algebra just defined on the full cohomology before restricting it to the collision spaces.

\begin{prop}[The modification action]
\label{prop:punctual-action}
The algebra $\mathcal H_G^0$ acts on $\bigoplus_{\mathbf v}H^*(\Quot^{\mathbf v}[\CC^2/G])$.
Its regular-charge subalgebra $\mathcal A_G^0$ acts on
\[ \mathbb V_G=\bigoplus_{n\geq0}H^*(\Hilb^n[\CC^2/G]).
\]
For $\alpha\in H_j^{\mathrm{BM}}(\mathfrak T_{k\delta})$, the operation has degree
\begin{equation}
\label{eq:punctual-action-degree}
\rho(\alpha):H^d(\Hilb^n[\CC^2/G]) \longrightarrow H^{d+2k-j}(\Hilb^{n+k}[\CC^2/G]).
\end{equation}
It is compatible with compact supports and preserves $\mathcal I_G=\bigoplus_{n\geq0}I_n(G)$.
Consequently $B_G$ is the character of this full representation, and $K_G$ is the character of its specified invariant subspace $\mathcal I_G$.
\end{prop}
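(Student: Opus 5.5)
The action will be built as a Hecke/modification correspondence, parallel to the Hall product of Definition \ref{def:hall-product}. For charges $\mathbf v$ and $\bfw$, let $\mathfrak Z_{\mathbf v,\bfw}$ be the reduced stack of pairs of invariant ideals $J'\subset J\subset R$. Here $R/J$ has charge $\mathbf v$, $R/J'$ has charge $\mathbf v+\bfw$, and $J/J'$ is a punctual $A$-module of charge $\bfw$. This gives the correspondence
\[ \mathfrak T_{\bfw}\times\Quot^{\mathbf v}[\CC^2/G] \xleftarrow{\ p\ }\mathfrak Z_{\mathbf v,\bfw} \xrightarrow{\ m\ }\Quot^{\mathbf v+\bfw}[\CC^2/G], \]
with $p=(J/J',J)$ and $m=J'$. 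The map $m$ is proper, since over $J'$ it parametrizes submodules $J/J'$ of the finite module $R/J'$ supported at the origin, which is a closed condition in a Grassmannian. The map $p$ is again not smooth. We equip it with the virtual pullback given by the equivariant Koszul complex $\Hom_G(\Lambda^\bullet U\otimes J,\,J/J')$, truncated as in Section \ref{subsec:hall-preliminaries}. Its Euler characteristic is $-E(\mathbf v,\bfw)$ up to the ideal correction; this is a non-finite $J$ whose $\Hom$ to a finite module is controlled through $0\ra J\ra R\ra R/J\ra0$. Since the $\Quot$ spaces are smooth, Poincar\'e duality identifies $H^*$ with $H^{\mathrm{BM}}_*$. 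We then set
\[ \rho(\alpha)(c)=\mathrm{PD}^{-1}m_*p^!(\alpha\boxtimes\mathrm{PD}(c)). \]

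The key steps, in order, are as follows. First, compute the relative virtual dimension. In the balanced case, $E(n\delta,k\delta)=0$ together with $D(\mathbf v+k\delta)=D(\mathbf v)+2k$ from Lemma \ref{lem:smooth-dimension} gives the degree shift \eqref{eq:punctual-action-degree}. Concretely, a BM class of degree $j$ paired with a class of degree $2D(\mathbf v)-d$ lands in BM degree $j+2D(\mathbf v)-d$, and dualizing in dimension $2(n+k)$ gives $d+2k-j$. Second, prove the module axiom $\rho(\alpha_1\star\alpha_2)=\rho(\alpha_1)\rho(\alpha_2)$, together with the unit axiom. For this, use the stack of three-step flags $J''\subset J'\subset J$. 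Both composites factor through it, the virtual pullbacks agree by the extension exact triangles, and the proper pushforwards agree by base change. This is the flag compatibility of \cite[Section 4.5]{KapranovVasserot}, applied verbatim with the ideal in place of one endpoint, and invariants are exact throughout. Restricting to $\bfw=k\delta$ and $\mathbf v=n\delta$ gives the action of $\mathcal A_G^0$ on $\mathbb V_G$.

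Third, establish compatibility with compact supports. Because $m$ is proper and $p^!$ is defined by a smooth pullback followed by a refined Gysin map, the same correspondence acts on $H_c^*$, and the forget-supports map intertwines the two actions by naturality of refined Gysin maps and proper pushforward. Hence the image $\mathcal I_G$ of $H_c^*\ra H^*$ is preserved. The character statements are then immediate: $B_G$ is the graded dimension of $\mathbb V_G$ by definition together with Lemma \ref{lem:core}, and $K_G$ is the graded dimension of $\mathcal I_G$ by Definition \ref{def:collision}.

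I expect the main obstacle to be the first and second steps, namely making $p^!$ rigorous when one endpoint is an ideal rather than a finite module. The difficulty is that $\Hom_G(J,J/J')$ and its $\Ext^1$ and $\Ext^2$ must form a perfect two-term obstruction theory over $\Quot^{\mathbf v}\times\mathfrak T_{\bfw}$ with locally constant rank. I would first try to replace $J$ by the finite module $R/\mathfrak m^N J$ for large $N$, which is legitimate because $J/J'$ is killed by $\mathfrak m^{N}$. I would then reduce to the finite-module extension complex of Section \ref{subsec:hall-preliminaries} and check that the result is independent of $N$. Once this is in place, the compatibility in the second step is formal.
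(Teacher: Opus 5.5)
Your architecture is the one the paper uses: the nested-ideal correspondence, a virtual pullback along $p$, and three-step flags for the module axiom. The gap is in the virtual relative rank, and it breaks your degree count. Over $(S,J)$ the relative problem is the surjective locus in the derived space of maps $J\to S$, with deformation complex $\operatorname{RHom}_A(J,S)$. The perfectness you single out as the main obstacle follows once you use that an invariant ideal has projective dimension at most one over $A=R\rtimes G$. This holds because $R$ is $A$-projective and $R/J$ has projective dimension at most two, by Koszul and exactness of invariants. Hence $\Ext^2_A(J,S)=0$, and a resolution $0\to F_1\to F_0\to J\to 0$ by finitely generated projectives gives the two-term perfect complex $\Hom_A(F_\bullet,S)$. (Your Koszul complex $\Hom_G(\Lambda^\bullet U\otimes J,S)$ has infinite-dimensional terms.) Its rank is
$\chi_A(J,S)=\chi_A(R,S)-\chi_A(R/J,S)=w_0-E(\mathbf v,\bfw)$, using $\Hom_A(R,S)=S^G$. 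For balanced charges this equals $k$, not $0$. Your count silently drops it. Starting from BM degree $j+4n-d$, duality in complex dimension $2(n+k)$ gives $d+4k-j$; only the extra $2k$ contributed by $p^!$ produces $d+2k-j$. At $n=0$ you can see this directly: the fibre of $p$ over $S$ is the open set of invariant cyclic vectors in $S^G$, which has dimension $k$. Your proposed repair would make the error permanent. Replacing $J$ by $J/\mathfrak m^NJ$ preserves $\Hom$ but changes $\Ext^1$ and introduces $\Ext^2$. Moreover $E(\bfu,\delta)=0$ for every finite charge $\bfu$, so this replacement assigns virtual rank $0$ to every balanced modification.

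The compact-support step also has a gap. Properness of $m$ gives the action on ordinary cohomology, but not on $H_c^*$. To pull a compactly supported class on $\Quot^{\mathbf v}[\CC^2/G]$ back to the nested space with compact support, you need the other projection $s:(J',J)\mapsto J$ to be proper. This is where punctuality enters. A punctual quotient $J\twoheadrightarrow S$ of length $N$ factors through $J/\mathfrak m^NJ$, so the fibres of $s$ are closed loci in Grassmannians of that finite module. The paper then writes the operator as $b\mapsto t_*(\gamma_\alpha\cap s^*b)$, with $\gamma_\alpha=p^!(\alpha\boxtimes[\Quot^{\mathbf v}[\CC^2/G]])$, and applies the projection formula with both $s$ and $t$ proper. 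Your argument never uses punctuality at this point. It would therefore apply verbatim to modifications by $\mathfrak R_{\bfw}$ with moving support, where $m$ is still proper but the paper explicitly withholds the compact-support conclusion. Your observation that $S$ is killed by $\mathfrak m^N$ is the right tool, but it belongs in the proof that $s$ is proper, not in the obstruction theory.
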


\begin{proof}
Let $\mathcal Z_{\bfw,\mathbf v}$ parametrize nested invariant ideals $J'\subset J\subset R$, where $R/J$ has charge $\mathbf v$ and $S=J/J'$ is punctual of charge $\bfw$.
There is a correspondence
\begin{equation*}
 \mathfrak T_{\bfw}\times\Quot^{\mathbf v}[\CC^2/G] \xleftarrow{\ p\ }\mathcal Z_{\bfw,\mathbf v} \xrightarrow{\ t\ }\Quot^{\mathbf v+\bfw}[\CC^2/G],
 \qquad p=(S,J),\quad t=J'.
\end{equation*}
The relative construction over $(S,J)$ is the surjective locus in the derived space of maps $J\ra S$.
Its deformation complex is $\operatorname{RHom}_A(J,S)$.
An equivariant ideal has projective dimension at most one: this follows from the two-dimensional Koszul resolution and exactness of taking $G$-invariants.
The complex therefore has amplitude $[0,1]$, so it defines the virtual pullback $p^!$.
Its relative rank is
\begin{equation*}
 r(\bfw,\mathbf v) =\chi_A(J,S)=w_0-E(\mathbf v,\bfw),
\end{equation*}
by $0\ra J\ra R\ra R/J\ra0$ and $\Hom_A(R,S)=S^G$, with $R$ projective as an $A$-module.

The map $t$ is projective: for fixed $J'$, the possible $J/J'$ form a closed invariant Grassmannian inside $R/J'$,
with its punctual condition closed.
Thus, on Borel-Moore homology,
the operation is explicitly
\begin{equation}
\label{eq:punctual-convolution}
\rho(\alpha)(b)=t_*p^!(\alpha\boxtimes b).
\end{equation}
This specifies the virtual operation; using only the fundamental class of the classical nested-ideal scheme would omit its obstruction term.

For flags $J''\subset J'\subset J$,
the exact sequence
\[
0\longrightarrow J'/J''\longrightarrow J/J''\longrightarrow J/J'\longrightarrow0
\]
identifies the two composite extension complexes through its $\operatorname{RHom}_A$ triangle.
Composition of the associated virtual pullbacks and their compatibility with proper pushforward therefore give
\[
\rho(\alpha\star\beta)=\rho(\alpha)\rho(\beta).
\]
This is the modification version of the flag argument in \cite[Section 4.5]{KapranovVasserot};
see also \cite[Theorem III.2.1 and Section III.2.1.1]{DiaconescuPortaSala}.
Exactness of $G$-invariants gives the same identities equivariantly.
The zero modification acts as the identity.

For balanced charges $E(n\delta,k\delta)=0$ and $r(k\delta,n\delta)=k$.
Hence \eqref{eq:punctual-convolution} sends homological degree $b$ to $b+j+2k$.
Poincar\'e duality on the smooth Hilbert schemes of complex dimensions $2n$ and $2(n+k)$ gives \eqref{eq:punctual-action-degree}.
For general charges the same argument gives cohomological shift $2D(\mathbf v+\bfw)-2D(\mathbf v)-j-2r(\bfw,\mathbf v)$.

The projection $s(J',J)=J$ is also projective:
a punctual quotient of length $N=|\bfw|$ factors through $J/\mathfrak m^N J$,
so the reduced parameter space is a closed relative Grassmannian locus.
With the correspondence class
\[
\gamma_\alpha=p^!\bigl(\alpha\boxtimes[\Quot^{\mathbf v}[\CC^2/G]]\bigr),
\]
the operator is $b\mapsto t_*(\gamma_\alpha\cap s^*b)$ after duality.
The projection formula and properness of both $s$ and $t$ give the same operation on compactly supported cohomology,
compatibly with $H_c^*\to H^*$.
Its image $\mathcal I_G$ is therefore preserved.
\end{proof}

\begin{remark}[The representation stack and punctual support]
The same construction with $\mathfrak R_{\bfw}$ in place of $\mathfrak T_{\bfw}$ allows modifying modules with arbitrary finite support.
The projection to $J'$ is still projective, so it acts on the Borel-Moore homology and hence the ordinary cohomology of the Quot schemes.
The closed punctual inclusion induces a Hall homomorphism by proper pushforward, and its action is the restriction just constructed.
Punctuality is what also makes the projection to $J$ proper and gives the stated compact-support compatibility.
No cyclicity or determinant-one hypothesis on $G$ was used.
\end{remark}

At the vacuum $1\in H^0(\Hilb^0[\CC^2/G])$, the relative complex is the vector bundle $S^G$.
Its surjective locus consists of invariant cyclic vectors.
If $\mathfrak C_{n\delta}\subset\mathfrak T_{n\delta}$ denotes the open substack admitting such a vector,
and $f:P_n\ra\mathfrak C_{n\delta}$ forgets it, then $f$ is smooth of relative dimension $n$.
The vacuum operation is the composite
\begin{equation}
\label{eq:punctual-vacuum}
\begin{split}
 H_j^{\mathrm{BM}}(\mathfrak T_{n\delta}) &\longrightarrow H_j^{\mathrm{BM}}(\mathfrak C_{n\delta}) \xrightarrow{f^*}H_{j+2n}(P_n)\\ &\xrightarrow{i_*}H_{j+2n}^{\mathrm{BM}}(\Hilb^n[\CC^2/G]) \cong H^{2n-j}(\Hilb^n[\CC^2/G]).
 \end{split}
\end{equation}
Its last map is precisely the compact intersection map after duality.
Thus every punctual vacuum class lies in $I_n^{2n-j}(G)$.
A Hall class whose framed pullback lies in the intersection radical annihilates the vacuum.
This relates the modification construction to the geometric definition of $K_G$, without identifying the support factorization with a tensor factorization of representations.
The algebra action alone does not evaluate its character or determine the intersection matrices.

\subsection{The Hall action on a general smooth surface stack}
\label{subsec:global-hall-action}

Let $\mcX$ be a smooth orbisurface over $\CC$.
Work with finite-type fixed-class Quot and coherent-sheaf moduli, and with projective modification maps.
For example, these conditions hold in the quasi-projective coarse-space setting with a generating sheaf and fixed modified lengths.
Let $\mathfrak C_\beta(\mcX)$ be the Artin stack of zero-dimensional coherent sheaves of class $\beta$ as introduced in section \ref{subsec:alpha-type}, with proper finite support.
When the type data do not determine an Euler pairing, refine the connected components until the Euler ranks below are constant.
There is no need to choose a global quotient presentation of $\mcX$.

Write $\mathfrak E_{\beta_1,\beta_2}$ for the stack of exact sequences $0\ra S_1\ra S\ra S_2\ra0$.
Its endpoint and middle-term maps are
\[ \mathfrak C_{\beta_1}(\mcX)\times\mathfrak C_{\beta_2}(\mcX) \xleftarrow{\ p\ }\mathfrak E_{\beta_1,\beta_2}
\xrightarrow{m}\mathfrak C_{\beta_1+\beta_2}(\mcX).\]
The extension complex $R\Hom_{\mcX}(S_2,S_1)$ has amplitude $[0,2]$.
The associated surface virtual pullback has relative rank $-\chi_{\mcX}(S_2,S_1)$; its degree is constant on each component under consideration.
The map $m$ is projective, being the fixed-class relative Quot space of the universal finite sheaf over the middle-term stack.
Consequently
\[ a_1\star a_2=m_*p^!(a_1\boxtimes a_2) \]
defines the Hall multiplication on $\bigoplus_\beta H_*^{\mathrm{BM}}(\mathfrak C_\beta(\mcX))$.
Its homological shift is $-2\chi_{\mcX}(S_2,S_1)$, with target Tate twist $\chi_{\mcX}(S_2,S_1)$.
The zero sheaf supplies the unit.
Associativity follows from the extension triangles for three-step flags.
These are the surface Hall constructions of \cite{KapranovVasserot,DiaconescuPortaSala}; the finite stabilizer invariants in each quotient chart are exact.
Their perfect complexes and flag identifications descend, giving the stated stack version.

Let $H_\alpha$ and $H_{\alpha+\beta}$ be smooth connected components of $\Quot^\alpha\mcX$ and $\Quot^{\alpha+\beta}\mcX$,
of complex dimensions $D_\alpha$ and $D_{\alpha+\beta}$.
Nested ideals define
\[ \mathfrak C_\beta(\mcX)\times H_\alpha \xleftarrow{\ p_{\mathrm{mod}}\ }\mathcal Z_{\beta,\alpha} \xrightarrow{\ t\ }H_{\alpha+\beta},
 \qquad J'\subset J,\quad S=J/J'.
\]
The relative construction is the surjective locus of the derived space of maps $J\ra S$.
Its deformation complex $R\Hom_{\mcX}(J,S)$ has amplitude $[0,1]$: ideals on a smooth surface have local projective dimension at most one,
and finite support eliminates higher sheaf cohomology.
Thus
\[ r=\chi_{\mcX}(J,S) =\chi_{\mcX}(\mcO_{\mcX},S) -\chi_{\mcX}(\mcO_{\mcX}/J,S) \]
is the virtual relative rank, on each component with fixed rank.
For $a\in H_j^{\mathrm{BM}}(\mathfrak C_\beta(\mcX))$, the operator
\begin{equation*}
 \rho(a)(b)=t_*p_{\mathrm{mod}}^!(a\boxtimes b)
\end{equation*}
has Borel-Moore degree shift $j+2r$.
The map $t$ is projective:
relatively over the target, it is the fixed-class closed locus in the relative Quot space of the universal finite sheaf $\mcO_{\mcX}/J'$.
Poincar\'e duality on the two smooth components therefore yields the precise ordinary-cohomology operation
\begin{equation}
\label{eq:global-hall-degree}
H^d(H_\alpha)\longrightarrow H^{d+2D_{\alpha+\beta}-2D_\alpha-j-2r}(H_{\alpha+\beta}).
\end{equation}
Flags $J''\subset J'\subset J$ and their $R\Hom$ triangles identify composition with Hall multiplication.
This proves the action on the total cohomology, with componentwise dimension and Euler shifts.
For balanced charges $n\delta,k\delta$ in $[\CC^2/G]$, the ranks specialize to $D_{n\delta}=2n$ and $r=k$,
recovering the shift $d\mapsto d+2k-j$.

If modifying sheaves are required to lie in a fixed proper closed subset $C\subset\mcX$, take the reduced closed-support loci,
as in the punctual convention above.
The projection $s(J',J)=J$ is proper as well.
On these reduced loci, for bounded length the quotient is supported on a bounded infinitesimal neighborhood of $C$,
and its parameter space is a projective relative Quot space.
Hence the correspondence acts on compactly supported cohomology compatibly with $H_c^*\ra H^*$ and preserves its image.
This applies in particular to modifications at one stacky point.
For unrestricted support on a nonproper surface, $s$ need not be proper; the ordinary-cohomology action remains defined,
but this compact-support conclusion is not asserted.
None of these constructions requires a cyclic stabilizer or a determinant-one tangent representation.

\subsection{Hyperbolic coefficients, the vacuum, and bounded resolutions}
\label{qfih:section}
The local action has two additional structures useful for computing its character, and We record their precise scope.
Write $g=|G|$, $\mathfrak T_n=\mathfrak T_{n\delta}=[Z_n/H_n]$,
$h_n=\dim H_n=gn^2$, $z_n=\dim Z_n$, and $\tau_n=z_n-h_n$.
Scalar multiplication contracts the punctual matrix variety $Z_n$ to zero.
It gives $c_n:\mathfrak T_n\ra BH_n$ and the zero section $i_n:BH_n\ra\mathfrak T_n$.

\begin{prop}[Dualizing coefficient localization]
\label{qfih:prop:coefficient}
With $\mathscr D_n=i_n^*\omega_{\mathfrak T_n}$, one has
\begin{equation*}
 Rc_{n*}\omega_{\mathfrak T_n}\simeq\mathscr D_n,
 \qquad H_j^{\mathrm{BM}}(\mathfrak T_n) =\mathbb H^{-j}(BH_n,\mathscr D_n).
\end{equation*}
Moreover $\mathscr D_n$ has amplitude $[-2\tau_n,2h_n]$, and
\begin{equation*}
 H_j^{\mathrm{BM}}(\mathfrak T_n)=0\quad(j>2\tau_n),
 \qquad \tau_n\le gn^2.
\end{equation*}
\end{prop}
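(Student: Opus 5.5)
The plan is to use the conic structure of $Z_n$ and a contraction lemma for $\mathbb G_m$-equivariant sheaves. The scalar action $(X,Y)\mapsto(\lambda X,\lambda Y)$ preserves $Z_n$ because the trace equations $\operatorname{Tr}(X^aY^b)=0$ are homogeneous. It commutes with $H_n=GL_{n\delta}$ and contracts $Z_n$ to $0$ as $\lambda\to0$. Hence $\omega_{Z_n}$ is $H_n$-equivariant and $\mathbb G_m$-monodromic, and I will invoke the standard contraction lemma for such complexes on a cone with vertex $0$: for $\pi:Z_n\to\mathrm{pt}$ and $i:0\to Z_n$ one has $R\pi_*F\simeq i^*F$. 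Since the scalar action commutes with $H_n$, I will apply this in the equivariant derived category, using finite-dimensional approximations $(Z_n\times W^\circ)/H_n\to W^\circ/H_n$ as in Definition \ref{def:hall-homology}. Each such map is a fibrewise cone bundle with zero section $W^\circ/H_n$, and the lemma holds fibrewise and hence relatively. This gives $Rc_{n*}\omega_{\mathfrak T_n}\simeq i_n^*\omega_{\mathfrak T_n}=\mathscr D_n$. Taking hypercohomology and using $H^{\mathrm{BM}}_j=\mathbb H^{-j}(\omega)$ yields the second identity.

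For the amplitude I will work on the atlas. There $\omega_{\mathfrak T_n}$ pulls back to $\omega_{Z_n}[-2h_n](-h_n)$, since the atlas map $Z_n\to\mathfrak T_n$ is smooth of relative dimension $h_n$ and smooth pullback shifts dualizing complexes by twice that dimension. So it suffices to bound the stalk $i_0^*\omega_{Z_n}$. For a variety $Z$ of dimension $z$ the dualizing complex has cohomology sheaves $\mathcal H^k(\omega_Z)$ supported in degrees $[-2z,0]$: the lowest degree $-2z$ is the top Borel--Moore homology, and $\mathcal H^k(\omega_Z)_x=H^{\mathrm{BM}}_{-k}$ of a small ball, which vanishes for $k>0$. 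Shifting by $-2h_n$ puts the stalk of $\mathscr D_n$ in degrees $[-2z_n+2h_n,\,2h_n]=[-2\tau_n,2h_n]$, which is the claimed amplitude. I will also check that $i_n^*$ of an equivariant complex is computed by this stalk with its $H_n$-action.

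Vanishing then follows by a spectral sequence. We have $\mathbb H^{-j}(BH_n,\mathscr D_n)$ with $H^p(BH_n)$ concentrated in $p\ge0$, and $H_n$ is connected, so the local systems on $BH_n$ are trivial. The Leray spectral sequence $E_2^{p,k}=H^p(BH_n,\mathcal H^k\mathscr D_n)$ therefore vanishes unless $p\ge0$ and $k\ge-2\tau_n$, so $\mathbb H^{-j}=0$ whenever $-j<-2\tau_n$, that is, $j>2\tau_n$.

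For $\tau_n\le gn^2$ I will bound $z_n\le 2h_n$. The point is that $Z_n\subset\operatorname{Rep}_{n\delta}\subset\Hom_G(U\otimes V,V)$, and this ambient space has dimension $2\dim\Hom_G(V,V)$ after restricting $U$ to $G$. More precisely, $\dim\Hom_G(U\otimes V,V)=\langle U\otimes V,V\rangle$, and for $V=n\CC[G]$ this equals $2n^2g=2h_n$, because tensoring with the regular representation multiplies by dimension. Then $\tau_n=z_n-h_n\le h_n=gn^2$.

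The main obstacle is justifying the contraction lemma equivariantly on a singular stack, specifically ensuring that the monodromic property survives the approximation limit. I would handle it by proving the statement for each finite approximation and checking stability in the stable range.
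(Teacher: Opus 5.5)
Your proposal is correct and follows essentially the same route as the paper: the contraction lemma on the free equivariant approximations, smooth descent to the atlas giving $\omega_{Z_n}[-2h_n](-h_n)$, the local stalk bound for the dualizing complex, nonnegativity of the degrees of $H^*(BH_n)$, and the ambient dimension $2gn^2$ of $\Hom_G(U\otimes n\CC[G],n\CC[G])$. The only cosmetic difference is that the paper uses the contraction lemma in the form $Rc_{n!}\QQ\simeq i_n^!\QQ$ and then applies Verdier duality, whereas you apply the $*$-form directly to the monodromic complex $\omega$; these are the same statement.
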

\begin{proof}
On each free equivariant approximation the contraction lemma gives $Rc_{n!}\QQ\simeq i_n^!\QQ$; Verdier duality gives the displayed identity \cite{BradenHyperbolic,RicharzHyperbolic}.
Smooth descent to the atlas identifies the dualizing complex with $\omega_{Z_n}[-2h_n](-h_n)$.
Its stalk at zero has degrees between $-2z_n+2h_n$ and $2h_n$, by the local cohomological dimension bound.
Cohomology of $BH_n$ has nonnegative degrees.
Finally $Z_n$ is closed in $\Hom_G(U\otimes n\CC[G],n\CC[G])$, of dimension $2gn^2$, proving the upper bound.
\end{proof}

This is the opposite scalar hyperbolic restriction with dualizing coefficients.
The other direction gives $Rc_{n!}\omega=\omega_{BH_n}$ and loses the desired groups.
In particular $\mathscr D_n$ cannot be replaced by a constant sheaf: its attachment maps retain the singularities of the punctual cone.
The assertion concerns the specified equivariant approximations, not an identification of the intrinsic fixed stack (which also remembers module gradings) with $BH_n$.
No equivariant parameter is inverted.

Set $\mathcal A_{n,j}=H_j^{\mathrm{BM}}(\mathfrak T_n)$ and $\mathbb I_{n,j}=I_n^{2n-j}(n)$.
This Tate twist makes the latter pure of weight $-j$ and changes no Betti numbers.
\begin{prop}[Minimum weights and the vacuum]
\label{qfih:lem:weights}
The spaces $\mathcal A^{\min}_{n,j}=W_{-j}\mathcal A_{n,j}$ form a subalgebra, and the vacuum action is a surjection
\[
\nu:\mathcal A^{\min}\twoheadrightarrow\mathbb I,
\qquad \nu(a)=\rho(a)1,
\qquad \mathbb I\cong\mathcal A^{\min}/\operatorname{Ann}_{\mathcal A^{\min}}(1)
\]
of bigraded left modules.
Here $1\in\mathbb I_{0,0}=\QQ$ is the vacuum,
and its annihilator is a left ideal.
If $p_n=\dim P_n$ and $\kappa_n=2(p_n-n)$, then
\begin{equation*}
 \mathbb I_{n,j}=0\quad\hbox{unless}\quad |j|\le\kappa_n.
\end{equation*}
In particular $|j|\le2n$ is always necessary for a nonzero image.
\end{prop}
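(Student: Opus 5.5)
The plan is to work throughout with the weight filtration on Borel--Moore homology and to use strictness of morphisms of mixed Hodge structures. The four claims are handled in turn: the subalgebra property, surjectivity of $\nu$, the annihilator description, and the vanishing bound.

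\emph{Subalgebra.} For a quotient stack computed on the equivariant approximations of Definition \ref{def:hall-homology}, the group $H_j^{\mathrm{BM}}$ has weights $\geq -j$. The Tate twist $(-l)$ in that definition is exactly what keeps this normalization, and $W_{-j}$ is the lowest possible weight piece. On the balanced ray $E(n\delta,m\delta)=0$, so Definition \ref{def:hall-product} gives a morphism of mixed Hodge structures
\[H_{j_1}^{\mathrm{BM}}(\mathfrak T_{n_1})\otimes H_{j_2}^{\mathrm{BM}}(\mathfrak T_{n_2})\ra H_{j_1+j_2}^{\mathrm{BM}}(\mathfrak T_{n_1+n_2})\]
with no Tate correction. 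It carries $W_{-j_1}\otimes W_{-j_2}$ into $W_{-j_1-j_2}$, and the unit has weight $0$. Hence $\mathcal A^{\min}$ is a subalgebra.

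\emph{Vacuum action.} I would use the factorization \eqref{eq:punctual-vacuum}; every map in it is a morphism of mixed Hodge structures after the Tate twist defining $\mathbb I_{n,j}$. By Lemma \ref{lem:core} the target $\mathbb I_{n,j}$ is pure of weight $-j$. Thus $\nu$ already lands in $\mathbb I$ and is a map of bigraded modules by Proposition \ref{prop:punctual-action}. To prove surjectivity from $W_{-j}$, I would check stepwise that each arrow is surjective on lowest weights.
\begin{enumerate}
\item Restriction to the open substack $\mathfrak C_{n\delta}$ has cokernel injecting into $H_{j-1}^{\mathrm{BM}}$ of the closed complement. That group has weights $\geq -j+1$, so restriction is surjective on $W_{-j}$.
\item The map $f$ is the open locus of cyclic vectors inside the rank-$n$ vector bundle $S^G$ over $\mathfrak C_{n\delta}$. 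The Thom isomorphism for the bundle, followed by the same open-restriction argument, shows that $f^*$ maps $W_{-j}$ onto $W_{-j-2n}H_{j+2n}(P_n)$.
\item Since $P_n$ is proper, its homology has weights $\leq -(j+2n)$; combined with the purity in Lemma \ref{lem:core}, this means $H_{j+2n}(P_n)$ is pure. So $f^*$ is surjective onto all of $H_{j+2n}(P_n)$.
\item Finally, $i_*$ followed by duality maps $H_{j+2n}(P_n)$ onto $I_n^{2n-j}$ by Proposition \ref{prop:intersection-image}.
\end{enumerate}
The isomorphism $\mathbb I\cong\mathcal A^{\min}/\operatorname{Ann}(1)$ is then the first isomorphism theorem for the cyclic module generated by $1$, and $\operatorname{Ann}(1)$ is a left ideal.

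\emph{Vanishing bound.} Write $d=2n-j$. A nonzero class in $I_n^d$ requires $H^d(P_n)\neq0$, which forces $d\le 2p_n$. It also requires $H_c^d(\Hilb^n)\cong H^{4n-d}(\Hilb^n)^\vee\cong H^{4n-d}(P_n)^\vee$ to be nonzero, which forces $4n-d\le 2p_n$. Together these give $|j|=|2n-d|\le 2p_n-2n=\kappa_n$. Since $P_n$ is a proper subvariety of the $2n$-dimensional $\Hilb^n$, we have $p_n\le 2n$, so $\kappa_n\le 2n$.

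\emph{Main obstacle.} I expect the hardest step to be the weight bookkeeping on the Artin stacks. One must verify that the bound ``weights $\geq -j$'' and the open-restriction surjectivity on lowest weights survive the limit over equivariant approximations with their twists. One must also verify that the Thom and smooth-pullback steps for $f$ are strictly compatible with these twists. I would check all of this on a fixed approximation $(Z\times W^\circ)/H$ in the stable range, where the statements are the classical ones for varieties.
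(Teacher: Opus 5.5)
Your proposal is correct. Its outer structure matches the paper's proof: the weight bound $\geq -j$ on stack Borel--Moore homology, strictness in the open--closed sequence, the vacuum factorization \eqref{eq:punctual-vacuum}, and the window obtained from the two homological degrees $2n\pm j$.

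The genuine difference is in how you show that the framing pullback is onto.
\begin{itemize}
\item The paper identifies $\mathfrak C_{n\delta}$ with the neutral classifying stack $B_{P_n}((\mcQ^G)^\times)$. It writes the unit group as $\mathbb G_m\times\mathbb V(\mathcal E)$ via augmentation and logarithm, and computes $H_j^{\mathrm{BM}}(\mathfrak C_{n\delta})=\bigoplus_{a\ge0}H_{j+2n+2a}(P_n)(-n-a)$. This group is pure of weight $-j$. Hence the boundary map vanishes, restriction from $\mathfrak T_n$ is onto the whole group, and $f^*$ is the projection onto the $a=0$ summand.
\item You instead regard $P_n$ as the open cyclic-vector locus in the total space of the rank-$n$ bundle $\mcQ^G$. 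After the Thom isomorphism you run the open--closed weight argument a second time. This needs only the purity of $H_*(P_n)$ from Lemma \ref{lem:core}.
\end{itemize}
Your route is more economical, since it requires no analysis of the automorphism group scheme. However, it only yields surjectivity on lowest weights, which is exactly what the statement requires. The paper's route gives a complete, pure computation of $H^{\mathrm{BM}}_*(\mathfrak C_{n\delta})$. It also exhibits $\ker f^*$ explicitly as the summands with $a>0$, which is extra information about the vacuum annihilator.

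There are two small slips to fix.
\begin{itemize}
\item Properness of $P_n$ gives weights $\geq -(j+2n)$ on $H_{j+2n}(P_n)$, not $\leq$. The upper bound comes from the core isomorphism with the smooth $\Hilb^n[\CC^2/G]$, so quote purity directly from Lemma \ref{lem:core}.
\item Purity of $\mathbb I_{n,j}$ does not by itself show that $\nu$ lands in $\mathbb I$. That follows from your step (4), since the image of $i_*$ on $H_{2n+j}(P_n)$ is exactly $I_n^{2n-j}$. Alternatively, it follows from the preservation of $\mathcal I_G$ in Proposition \ref{prop:punctual-action}.
\end{itemize}
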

\begin{proof}
Borel-Moore homology in degree $j$ has weights at least $-j$;
the same bound follows on quotient-stack approximations after their dimension shift and Tate twist.
The Hall product preserves these bounds additively.

For $n>0$, the invariant-cyclic open substack is the neutral classifying stack $B_{P_n}((\mcQ^G)^\times)$.
A cyclic module determines its annihilator ideal, and changing its cyclic vector is multiplication by a unit.
Augmentation and the finite logarithm identify the unit group with $\mathbb G_m\times\mathbb V(\mathcal E)$,
of dimension $n$.
Consequently
\[ H_j^{\mathrm{BM}}(\mathfrak C_{n\delta}) =\bigoplus_{a\ge0}H_{j+2n+2a}(P_n)(-n-a).
\]
Every summand is pure of weight $-j$.
In the open-closed homology sequence, its boundary would have weight at least $1-j$ and hence vanishes.
Restriction from $\mathfrak T_n$ is onto; the framing pullback selects $a=0$ and is onto.
Its final pushforward has image $I_n$, as in \eqref{eq:punctual-vacuum}.
Strictness of weights proves surjectivity already on $W_{-j}$.
Finally the two arguments of the compact intersection pairing have homological degrees $2n+j$ and $2n-j$,
both at most $2p_n$.
This proves the window.
Since $p_n\le2n$, it also proves the unconditional bound.
\end{proof}

\begin{corollary}[The collision character]
\label{cor:collision-hall-character}
The collision series is the graded character of the cyclic minimum-weight Hall module generated by the vacuum:
\begin{equation}
\label{eq:collision-hall-character}
K_G(q,t)=\sum_{n\geq0}\sum_{j\in\ZZ}\dim\left(\mathcal A^{\min}/\operatorname{Ann}_{\mathcal A^{\min}}(1)\right)_{n,j}q^{2n-j}t^n.
\end{equation}
The character records ordinary graded dimensions, without alternating signs.
\end{corollary}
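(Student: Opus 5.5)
The corollary is a direct consequence of Proposition \ref{qfih:lem:weights}, read alongside Definition \ref{def:collision}. The approach is to identify the bigraded module $\mathbb I$ with the collision spaces, then re-index the degrees. First recall that $\mathbb I_{n,j}=I_n^{2n-j}(n)$. The Tate twist changes weights but not underlying rational dimensions, so $\dim\mathbb I_{n,j}=\dim I_n^{2n-j}$. Substituting $d=2n-j$ into Definition \ref{def:collision} gives
\[
K_G(q,t)=\sum_{n\ge0}\sum_{d}\dim I_n^d\,q^dt^n=\sum_{n\ge0}\sum_{j\in\ZZ}\dim\mathbb I_{n,j}\,q^{2n-j}t^n .
\]
This is a bijection of indices, since $j\mapsto 2n-j$ is bijective for fixed $n$. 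All sums are finite in each charge, because by the window $|j|\le\kappa_n\le2n$ of Proposition \ref{qfih:lem:weights} each $\mathbb I_{n,\bullet}$ is finite-dimensional.

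Second, invoke the surjection $\nu:\mathcal A^{\min}\twoheadrightarrow\mathbb I$, $a\mapsto\rho(a)1$, of Proposition \ref{qfih:lem:weights}. I would check that it is a morphism of bigraded modules. By \eqref{eq:punctual-action-degree}, a class $a\in\mathcal A_{k,j}$ sends $H^d(\Hilb^n)$ to $H^{d+2k-j}(\Hilb^{n+k})$. Applied to the vacuum ($n=0$, $d=0$), it lands in $H^{2k-j}(\Hilb^k)$, whose compact image is exactly $\mathbb I_{k,j}$. So $\nu$ preserves the bigrading $(n,j)$. Its kernel is $\operatorname{Ann}_{\mathcal A^{\min}}(1)$, which is homogeneous because $\nu$ is bigraded. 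Hence the isomorphism $\mathcal A^{\min}/\operatorname{Ann}(1)\cong\mathbb I$ holds in each bidegree, and $\dim(\mathcal A^{\min}/\operatorname{Ann}(1))_{n,j}=\dim\mathbb I_{n,j}$. Combined with the first step, this yields \eqref{eq:collision-hall-character}. The module is cyclic, generated by the vacuum, because $\nu$ is onto and the unit acts as the identity.

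Finally, the absence of signs: every term is the dimension of an honest graded piece of a vector space, with no Euler-characteristic alternation. One should note that the quotient is taken as a left module, not an algebra. The main point needing care, and the only real obstacle, is the bigrading compatibility. Specifically, one must confirm that the minimum-weight truncation $W_{-j}$ is preserved by the action, and that the annihilator is bigraded. I would handle both by the strictness of weights already used in Proposition \ref{qfih:lem:weights}, together with the additivity of $(n,j)$ under the Hall product noted after Definition \ref{def:hall-product}.
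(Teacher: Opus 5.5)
Your proposal is correct and follows essentially the paper's argument: the paper likewise reads off from Proposition \ref{qfih:lem:weights} that the bidegree-$(n,j)$ piece of $\mathcal A^{\min}/\operatorname{Ann}_{\mathcal A^{\min}}(1)$ is $I_n^{2n-j}(G)(n)$, notes that the Tate twist leaves dimensions unchanged, and substitutes $d=2n-j$. Your additional checks are harmless but redundant, since that proposition already asserts that $\mathcal A^{\min}$ is a subalgebra and that $\nu$ is a surjection of bigraded left modules; these checks are the bigrading compatibility, the homogeneity of the annihilator, and the preservation of the truncation $W_{-j}$.
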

\begin{proof}
Proposition \ref{qfih:lem:weights} identifies the quotient in bidegree $(n,j)$ with $I_n^{2n-j}(G)(n)$.
The Tate twist preserves its dimension,
so the substitution $d=2n-j$ gives the defining series $K_G(q,t)=\sum_{n,d}\dim I_n^d(G)q^dt^n$.
\end{proof}

In the centered grading $e=-j$, equation \eqref{eq:collision-hall-character} is obtained from the character of $\mathbb I$ by replacing $t$ with $q^2t$.
This is a quotient of left modules; the vacuum annihilator need not be a two-sided ideal.
Evaluating this character still requires the annihilator or the compact intersection maps;
the character description alone does not give a product formula.

For each charge the Chern ring
\[ \mathscr R_n=H^*(BH_n)= \QQ[c_{i,a}:1\le a\le nd_i],\qquad |c_{i,a}|=2a,
\]
acts on the collision space through the universal multiplicity bundles.
In centered degree $e=-j$, that finite-dimensional module has a finite graded free resolution over this polynomial ring.
If $s_n=n\sum_i d_i$ and $d'_1\le\cdots\le d'_{s_n}$ are the variable degrees, the generators in its $l$-th minimal free term have degrees in
\begin{equation*}
 \left[-\kappa_n+\sum_{a=1}^l d'_a,
       \ \kappa_n+\sum_{a=s_n-l+1}^{s_n}d'_a\right],
 \qquad 0\le l\le s_n.
\end{equation*}
Indeed these generators are the corresponding Tor group, computed by the Koszul complex on the Chern variables; its exterior degrees give exactly these bounds.
This resolves a fixed-charge module over its Chern ring, not the vacuum annihilator over the Hall algebra.

Across charges use the normalized bar resolution $B_l=\mathcal A\otimes\mathcal A_+^{\otimes l}\otimes\mathbb I$,
whose faces multiply adjacent factors or act on the last factor.
The usual insertion-of-unit homotopy proves exactness of its augmentation to $\mathbb I$.
At total charge $N$, total internal degree $J$, and centered degree $e=-J$, put
\[ C_{N,0}=N^2,\qquad C_{N,l}=(N-l+1)^2+l-1\quad(1\le l\le N).
\]
Then
\begin{equation*}
 B_l=0\ (l>N),\qquad e\ge-2gC_{N,l}.
\end{equation*}
A Hall factor of charge $a$ in a nonzero summand satisfies
\begin{equation*}
 J-2g(N-a)^2\le j\le2ga^2.
\end{equation*}
To prove both statements, write the charges as $a_0\ge0$, $a_1,\ldots,a_l\ge1$, $m\ge0$ with sum $N$.
The factor bounds give $J\le2g\sum_ba_b^2+2m\le2g(\sum_ba_b^2+m^2)$.
The maximum square sum is $C_{N,l}$: place every remaining unit in one of the required positive slots.
Removing one Hall factor gives the second inequality.
Thus every fixed $(N,J)$ part is finite-dimensional.
Exactness of minimum-weight truncation gives the bar resolution over $\mathcal A^{\min}$ as well.

For the collision window $|J|\le\kappa_N$, every Hall factor needed therefore lies in
\[ -\kappa_N-2g(N-a)^2\le j\le2ga^2.\]
This larger interval is necessary: an operator killing the vacuum may act after another operator.
The annihilator is a left ideal,
not necessarily a two-sided ideal.
In computations using $\mathscr D_a$, its coefficient differentials must also be retained before taking minimum weights.
These finite bounds organize the calculation, but neither determine those differentials nor provide the generators and relations of the annihilator in all lengths.

\bibliographystyle{alpha}
\bibliography{reference1010}

\end{document}